\documentclass{amsart}

\input xy
\xyoption{all}

\usepackage{soul}

\newbox\noforkbox \newdimen\forklinewidth
\setbox0\hbox{$\textstyle\smile$}
\setbox1\hbox to \wd0{\hfil\vrule width \forklinewidth depth-2pt
 height 10pt \hfil}
\setbox\noforkbox\hbox{\lower 2pt\box1\lower 2pt\box0\relax}
\def\unionstick{\mathop{\copy\noforkbox}\limits}
\def\nonfork_#1{\unionstick_{\textstyle #1}}

\setbox0\hbox{$\textstyle\smile$}
\setbox1\hbox to \wd0{\hfil{\sl /\/}\hfil}
\setbox2\hbox to \wd0{\hfil\vrule height 10pt depth -2pt width
              \forklinewidth\hfil}
\newbox\doesforkbox
\setbox\doesforkbox\hbox{\lower 2pt\box1 \lower 2pt\box2\lower2pt\box0\relax}
\def\nunionstick{\mathop{\copy\doesforkbox}\limits}

\def\fork_#1{\nunionstick_{\textstyle #1}}

\newbox\noforkboxs \newdimen\forklinewidths
\setbox0\hbox{$\textstyle\smile$}
\setbox1\hbox to \wd0{\hfil\vrule width \forklinewidths depth-2pt
 height 10pt \hfil}
\setbox\noforkboxs\hbox{\lower 2pt\box1\lower 2pt\box0\relax}
\def\unionsticks{\mathop{\copy\noforkboxs^{\mkern-8mu\mathrm{s}}}\limits}
\def\nonforks_#1{\unionsticks_{\textstyle #1}}

\setbox0\hbox{$\textstyle\smile$}
\setbox1\hbox to \wd0{\hfil{\sl /\/}\hfil}
\setbox2\hbox to \wd0{\hfil\vrule height 10pt depth -2pt width
              \forklinewidth\hfil}
\newbox\doesforkboxs
\setbox\doesforkboxs\hbox{\lower 2pt\box1 \lower 2pt\box2\lower2pt\box0\relax}
\def\nunionsticks{\mathop{\copy\doesforkbox}\limits}

\def\forks_#1{\nunionsticks_{\textstyle #1}}

\usepackage{amsmath}
\usepackage{amssymb}

\usepackage{amsthm}  

\usepackage{comment}

\newtheorem{same}{This should never appear}[section]
\newtheorem{defin}[same]{Definition}

\newtheorem{remark}[same]{Remark}
\newtheorem{theorem}[same]{Theorem}

\newtheorem{lemma}[same]{Lemma}
\newtheorem{fact}[same]{Fact}
\newtheorem{question}[same]{Question}
\newtheorem{cor}[same]{Corollary}
\newtheorem{prop}[same]{Proposition}

\newtheorem*{mtheorem}{Theorem \ref{Thm:main_Kl}}
\newtheorem*{mtheorem2}{Theorem \ref{Cor:Kk_not_kappa_tame}}
\newcommand{\Cl}{\operatorname{cl}}

\newtheorem{defin*}{defin}
\newtheorem*{theorem*}{Theorem}

\makeatletter
\newcommand{\skipitems}[1]{%
  \addtocounter{\@enumctr}{#1}%
}
\newcommand{\K}{\mathbf{K}}

\newcommand{\id}{\operatorname{id}}

\newcommand{\Kk}{\mathbf{K}_2}
\newcommand{\Kl}{\mathbf{K}_1}

\newcommand{\restr}{\upharpoonright}

\newcommand{\LS}{\operatorname{LS}}

\newcommand{\Z}{\mathbb{Z}}

\newcommand{\gtp}{\mathbf{gtp}}
\newcommand{\gS}{\mathbf{gS}}

\usepackage{tikz-cd}
\newcommand{\Q}{\mathbb{Q}}

\title{Examples of non-tame abstract elementary classes of abelian groups}

\author[Herden, Mazari-Armida, Walton]{Daniel Herden, Marcos Mazari-Armida, Michael D. Walton}

\thanks{The first author was supported by Simons Foundation grant MPS-TSM-00007788.
	The second author was supported by NSF grant DMS-2348881 and Simons Foundation grant MPS-TSM-00007597}

\address{\newline Daniel Herden, Marcos Mazari-Armida, and Michael D. Walton \newline Department of Mathematics \newline Baylor University \newline Sid Richardson Building \newline 1410 S.~4th Street \newline	Waco, TX 76706, USA}
\urladdr{https://sites.baylor.edu/daniel\_herden/}
\urladdr{https://sites.baylor.edu/marcos\_mazari/}

\email{daniel\_herden@baylor.edu; marcos\_mazari@baylor.edu, michael\_walton2@baylor.edu}

\begin{document}

\begin{abstract} 
We construct an abstract elementary class $\Kl$ of torsion-free abelian groups such that $\Kl$ is not $(<\aleph_0)$-tame but is $\aleph_0$-tame. This  answers a question of \cite{bova-tame}. Furthermore, for every regular uncountable cardinal $\mu$ less than the first measurable cardinal, we construct an abstract elementary class $\Kk(2^\mu)$ of torsion-free abelian groups such that $\Kk(2^\mu)$ is not $(<\mu)$-tame.

$\Kl$ and $\Kk(2^\mu)$ are non-tame for algebraic reasons. Furthermore, they constitute the first examples of non-tame abstract elementary classes in a natural language.
\end{abstract}


\maketitle

{\let\thefootnote\relax\footnote{{AMS 2020 Subject Classification:
Primary:  03C60, 03C48. Secondary: 03C45, 20K20,  	20K40.

Key words and phrases. Abstract elementary classes; Tameness; Torsion-free abelian groups.  }}}


\section{Introduction}

Abstract elementary classes (AECs for short) \cite{sh88} are the prominent setup for studying non-elementary classes of structures as they are general enough to encompass many interesting examples while at the same time allowing the development of a deep theory.  For examples of AECs see \cite[Section 3.2]{bova-tame}, \cite[Example 2.2]{maztor}, \cite[Section 3.2]{bon25}; and for the development of the theory see \cite{grossberg2002}, \cite{shelahaecbook}, \cite{baldwinbook09}, \cite{bova-tame}.

An abstract elementary class is an object determined by a class of structures $K$ and a partial order $\leq_K$ on  $K$ which satisfies a few natural closure properties. Two of the key properties are that the class is closed under increasing chains and  that every subset of an element in $K$ can be closed to a \emph{small} element in $K$ (see Definition~\ref{Def:AEC}). In this paper, $K$ will always be a class of torsion-free abelian groups and $\leq_K$ will strengthen the subgroup relation.

As an AEC is a semantic object, the correct notion of type in this setup is not a collection of formulas as in first-order model theory, but that of a Galois type \cite{sh300}. Intuitively, the Galois type of an element $b$ in $N\in K$ over the set of parameters~$A$ (for $A \subseteq N$) is the equivalence class of all triples $(b',A,N')$ such that there
is a way of sending $b$ to $b'$ while simultaneously keeping all parameters in $A$ fixed (see Definition~\ref{Def:Galois_type}).

Since Galois types are semantic objects, it might not be possible to distinguish two types by a \emph{small} subset of their set of parameters. When we can distinguish types by subsets of size less than $\lambda$, we say that an AEC is $(<\lambda)$-\emph{tame}. It is worth pointing out that elementary classes are $(<\aleph_0)$-tame\footnote{Where $\aleph_0$ is the first infinite cardinal.} as two first-order types can be distinguished by a formula which has a finite parameter.

Tameness was isolated by Grossberg and VanDieren \cite{tamenessone} twenty five years ago and since then has become one of the central notions in the study of AECs. Nowadays, many results on AECs either outright assume tameness, see \cite{tamenesstwo}, \cite{bkv}, \cite{extendingframes}, \cite{grva} \cite{vaseyt}, \cite{leu2}, \cite{may}, or showing tameness (often with  assumptions beyond ZFC) is an intermediate step toward a larger goal, see \cite{vaseyu}, \cite{vaseye}, \cite{shva}.

There are plenty of tame AECs. For instance, all the following AECs with pure embeddings are $(<\aleph_0)$-tame: torsion abelian groups, $\aleph_1$-free abelian groups, and absolutely pure $R$-modules. Many more examples of tame AECs, both of algebraic and non-algebraic flavor, are provided in \cite[Section 3.2]{bova-tame}. Moreover, the statement that every AEC is $(<\lambda)$-tame for some cardinal $\lambda$ is equivalent to a large cardinal axiom \cite{bo-un}, \cite{bo-li}.

On the other hand, there are only three families of examples known to be non-tame: Hart-Shelah, see \cite{ha-sh}, \cite{ba-ko}, Baldwin-Shelah \cite{bash}, and Boney-Unger-Shelah, see \cite{bo-un}, \cite{sh-ntame}.   The Hart-Shelah and Boney-Unger-Shelah examples are of set-theoretic nature, while the Baldwin-Shelah example is at the border of algebra and set theory. Further details about these three examples can be consulted in \cite[Section 3.2.2]{bova-tame}.

In this paper, we provide a new family of examples of non-tame abstract elementary classes. The reason tameness fails in our examples is a purely algebraic one. Intuitively, tameness fails in all of our classes since for a small cardinal $\lambda$\footnote{We need $\lambda$ to be a regular cardinal below the first measurable cardinal.} every group homomorphism $f: \prod_{\alpha<\lambda} \Z e_\alpha \to \prod_{\alpha<\lambda} \Z e_\alpha$ is completely determined by the values $f(e_\alpha)$ (see Theorem \ref{Prop:Homo_Unique} and Theorem~\ref{Prop:measurable_homo}).  We leverage this result and do some minor coding via divisibility of primes to obtain two distinct Galois types which are equal when restricted to small subsets (see Theorem \ref{Thm:Kl_not_aleph0_tame} and Theorem~\ref{Cor:Kk_not_kappa_tame}).

In stark contrast to previous examples of non-tameness, set-theoretic notions play a minimal role in our examples. It is worth emphasizing that in the case of $(<\aleph_0)$-tameness, set-theoretic notions play no role at all. Another significant difference between our examples and the earlier ones is that our examples are in a natural language, the language of abelian groups\footnote{The language of abelian groups is $L= \{ +, -, 0\}$  where $+,-$ are binary functions.}, while all previous examples are considerably more pathological, relying on big languages to encode non-tameness.

We first construct an AEC of torsion-free abelian groups $\Kl$ (see Definition \ref{Def:Kl(p_1,p_2)}) with the following properties:

\begin{mtheorem}
 $\Kl$ is an abstract elementary class with $\LS(\Kl)=\aleph_0$ such that:
    \begin{enumerate}
        \item  $\Kl$ is \emph{not} $(<\aleph_0)$-tame.
        
        \item  $\Kl$ is $\aleph_0$-tame.
            
        \item  $\Kl$ is stable in $\lambda$ if and only if $\lambda^{\aleph_0} = \lambda$. 
    \end{enumerate}
\end{mtheorem}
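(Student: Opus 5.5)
The plan is to prove the three items separately, after first checking that $\Kl$ is an abstract elementary class with $\LS(\Kl)=\aleph_0$. For that preliminary step I would go through the axioms of Definition~\ref{Def:AEC} directly from Definition~\ref{Def:Kl(p_1,p_2)}.
\begin{itemize}
\item Isomorphism invariance and the coherence axiom should be immediate, because the strong substructure relation is a strengthening of the subgroup relation defined by algebraic conditions. I expect these to be purity/divisibility conditions involving the primes $p_1,p_2$.
\item The chain axioms reduce to the observation that divisibility witnesses and the relevant subgroups are finitary. So they are preserved under unions of increasing chains, and the union is above each member.
\item For the L\"owenheim--Skolem axiom, I would close a countable subset of $M\in\Kl$ under the finitely many algebraic operations the definition requires (e.g.\ taking $p_i$-division witnesses and pure closures). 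This is a countable iteration, so it yields a countable strong substructure.
\end{itemize}

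For item (1) I would build an explicit counterexample using the product $P=\prod_{n<\omega}\Z e_n$. I will place $P$ (or a suitable subgroup containing $\bigoplus_n \Z e_n$) inside some $N\in\Kl$ and take $A=\{e_n : n<\omega\}$. I then want two elements $b_1,b_2\in N$ with $\gtp(b_1/A;N)\neq\gtp(b_2/A;N)$ but with equal restrictions to every finite $A_0\subseteq A$.
\begin{itemize}
\item The prime coding will make $b_1$ and $b_2$ differ in a way that no strong embedding fixing all the $e_n$ can reconcile. For instance, $b_1$ is a product element all of whose relevant coordinates are divisible by powers of $p_1$, while $b_2$ is tied to $p_2$.
\item If the types over $A$ were equal, there would be strong embeddings of $N$ into a common extension agreeing on $A$. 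By Theorem~\ref{Prop:Homo_Unique}, a homomorphism from the product is determined by its values on the $e_n$. Hence the embedding is forced to send $b_1$ to something coordinatewise equal to $b_1$, which contradicts the $p_1$/$p_2$ coding.
\item For a finite $A_0$, there is an automorphism of $N$ fixing $A_0$ that swaps the coordinates outside $A_0$ so as to move $b_1$ to $b_2$. Here I would adjust $b_1$ and $b_2$ to agree on finitely many coordinates, or use a coordinate shift. This shows the restricted types are equal.
\end{itemize}

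For item (2) I would show that a Galois type $\gtp(b/M;N)$ is determined by algebraic data that is itself witnessed over countable subsets. Because strong substructure is controlled by divisibility, the natural candidate is the pair consisting of the isomorphism type of the pure/closure subgroup generated by $M\cup\{b\}$ over $M$ and the divisibility relations of $b+m$. Each such relation involves only countably many parameters. Given types $p\neq q$ over $M$, I would extract countably many witnesses of the difference and close them to a countable $M_0\le M$, chosen so that the same invariant already differs over $M_0$.

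For item (3), amalgamation (I expect it from free products or pushouts of torsion-free groups) together with item (2) gives $|\gS(M)|\le |M|^{\aleph_0}\cdot 2^{\aleph_0}$. This is because a type over $M$ is determined by its restrictions to the countable submodels. For the converse I would embed $\lambda$-many copies of the item (1) configuration to produce $\lambda^{\aleph_0}$ distinct types over a model of size $\lambda$. The elements would be indexed by countable sequences from $M$, and they are pairwise separated by the same homomorphism-rigidity argument.

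The main obstacle is item (2). I must identify precisely which invariant determines Galois types and verify that two types agreeing on this invariant can be amalgamated over $M$. The first thing I would try is a pushout of $N_1$ and $N_2$ over $M$, followed by a check that the pushout stays torsion-free and that the divisibility conditions hold.
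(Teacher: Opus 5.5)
There is a genuine gap, and it is at the core of item (1). Tameness (Definition~\ref{Def:Tame}) concerns types over a model $M\in K_1$, not over the set $\{e_n:n<\omega\}$. More seriously, since $\Kl$ admits intersections, equality of Galois types is witnessed exactly by an isomorphism of $\Kl$-closures (Fact~\ref{Prop:Galois_type_Vasey}). The $\Kl$-closure of a countable set $B$ in $N$ lies inside the pure closure of $B\cup p_1^\omega N\cup p_2^\omega N$, which is countable by clause (1) of Definition~\ref{Def:Kl(p_1,p_2)}. So the product $\prod_{n<\omega}\Z e_n$ never enters the comparison, and its rigidity buys nothing. For example, take $N=\prod_{n<\omega}\Z e_n$, $b_1=(1,1,\dots)$ and $b_2=b_1+e_0$. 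Both closures are the pure closure $P$ of $\langle b_1,e_n:n<\omega\rangle$, whose elements are eventually constant. Then $x\mapsto x+c(x)e_0$, with $c(x)$ the eventual value of $x$, is an automorphism of $P$ fixing every $e_n$ and sending $b_1$ to $b_2$, so the types agree even over the model $\bigoplus_n\Z e_n$. Your appeal to Theorem~\ref{Prop:Homo_Unique} also misreads it. That theorem is about endomorphisms of the specific countable group $N$ of Definition~\ref{def:M,N}. Maps from $\prod_{n<\omega}\Z e_n$ into an arbitrary common extension (which may contain $\Q$) are not determined by their values on the $e_n$. What you are missing are the three features built into Definition~\ref{def:M,N}. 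First, $M=\bigoplus_n\Z[1/p_1,1/p_2]e_n=p_1^\omega N$, so every $G\le_{\Kl}N$ properly containing $M$ has $G\neq p_1^\omega G$ and hence $p_2^\omega G=p_2^\omega N$. Second, $p_2^\omega N=N$, so $\Cl^N_{\Kl}(M\cup\{a\})=N$, and equal types would give an automorphism of $N$ over $M$ moving $a$ to $b$. Third, this countable $N$ is rigid over $\{e_n\}$, via the $q_1$-, $q_2$-adic coding of the $y_\alpha,z_\alpha$ together with $q_1^\omega N=q_2^\omega N=0$. The primes $p_1,p_2$ do not distinguish $a$ from $b$ (indeed $b-a=e_0\in M$); they only force the closure to be all of $N$. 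A product becomes usable only for $\Kk(2^\mu)$, where $p_2^\omega N_\mu=N_\mu$ is allowed to have size $2^\mu$.

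For items (2) and (3) you rely on amalgamation via pushouts, but $\Kl$ fails amalgamation (Proposition~\ref{Prop:Kl_NMM_noJEP_noAP}). Also, $\aleph_0$-tameness only says a type over $M$ is determined by its restrictions to countable $C\subseteq M$. Counting such families bounds $|\gS_{\Kl}(M)|$ only by $2^{|M|^{\aleph_0}}$, not by $|M|^{\aleph_0}\cdot 2^{\aleph_0}$.

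The missing idea is the dichotomy the paper uses. If $G=p_1^\omega G$, then $G$ is countable and $\aleph_0$-tameness over $G$ is trivial. If $G\neq p_1^\omega G$, then every $H$ with $G\le_{\Kl}H$ satisfies $p_2^\omega H=p_2^\omega G\subseteq G$. By Lemma~\ref{Lem:Kl_closure_equal_to_TF} the relevant $\Kl$-closures are then pure closures, and $\Kl$-types over $G$ coincide exactly when the $\K_{TF}$-types do (Proposition~\ref{Prop:Kl_gtp_equal_to_TF}).
\begin{itemize}
\item Item (2) follows from the $(<\aleph_0)$-tameness of $\K_{TF}$: an isomorphism of $\Kl$-closures over a finite $D$ is also a $\K_{TF}$-witness, since those closures are pure.
\item The implication $\lambda^{\aleph_0}=\lambda\Rightarrow$ stable follows from stability of $\K_{TF}$. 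Indeed $\lambda^{\aleph_0}=\lambda$ forces $\lambda>\aleph_0$, hence $G\neq p_1^\omega G$ whenever $\|G\|=\lambda$.
\end{itemize}
The same dichotomy shows why your plan for the converse fails. Over uncountable models, types are separated exactly as in $\K_{TF}$, so the phenomenon of item (1) cannot be reproduced there; moreover, $\lambda$ copies of $M$ would make $p_1^\omega$ uncountable. Instead, take the $\lambda^+$ distinct $\K_{TF}$-types over a free group $A$ of size $\lambda$ realized in free groups $B_i$ (Baldwin--Eklof--Trlifaj). Since $p^\omega$ vanishes on free groups, $A\le_{\Kl}B_i$, and distinctness transfers by Proposition~\ref{Prop:Kl_gtp_equal_to_TF}.

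Finally, for the L\"owenheim--Skolem axiom you must add all of $p_1^\omega G\cup p_2^\omega G$ (countable by the definition of $K_1$) before taking the pure closure. Closing under division witnesses only yields $p_i^\omega H=p_i^\omega G\cap H$, which can violate clause (2)(b) of Definition~\ref{Def:Kl(p_1,p_2)}.
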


The existence of such an AEC answers positively a question of \cite[p.~412]{bova-tame}\footnote{More precisely it answers: \emph{Is there an AEC $\K$ that is $\aleph_0$-tame but not $(< \aleph_0)$-tame?}}. An important model-theoretic difference between this example and previous examples of failure of tameness  is that $\Kl$ is stable on unboundably many cardinals.

Furthermore, in  \cite[p. 413]{bova-tame} it is asked: \emph{Are there
natural mathematical structures that are, in some sense, well behaved and should be amenable to a model-theoretic analysis,
but are not tame?} We argue that $\Kl$ provides the first evidence that the answer to this question might be positive as $\Kl$ is stable and is not $(<\aleph_0)$-tame. Nevertheless, we still think that significant work needs to be done to provide a final answer.

Using the same core idea as that for $\Kl$, we construct for every regular uncountable cardinal $\mu$ less than the first measurable cardinal an AEC of torsion-free abelian groups $\Kk(2^\mu)$ (see Definition \ref{Def:Kk(p_1,p_2)}) such that the following holds:

\begin{mtheorem2} Assume $\mu$ is a regular  uncountable cardinal less than the first measurable cardinal.
Then $\Kk(2^\mu)$ is an abstract elementary class with $\LS(\Kk(2^\mu))=2^\mu$ such that $\Kk(2^\mu)$ is \emph{not} $(<\mu)$-tame.
\end{mtheorem2}

The AECs $\Kl$ and $\Kk(2^\mu)$  are similar, but the failure of tameness in $\Kl$ is more localized as it only happens in  ``small'' models (see Lemma \ref{Lem:Kl_aleph_theta_tame}) while the failure of tameness in $\Kk(2^\mu)$ happens in models of arbitrary size (see Lemma \ref{Lem:Kk_not_mu_theta_tame} and Lemma \ref{kk-no-aleph-0-tame}).

The paper is divided into three sections. Section \ref{Section:Prelims} covers the notions of abelian group theory, abstract elementary classes, and the AEC of torsion-free abelian groups with pure embeddings which we use in this paper. Section \ref{sectio-3} covers the construction of $\Kl$ and its analysis. Section \ref{Section:Kk} covers the construction of $\Kk(\kappa)$ and its analysis.

\section{Preliminaries}\label{Section:Prelims}

We introduce the notions of abelian group theory and abstract elementary classes that will be used in this paper. Further details on abelian group theory can be consulted in \cite{fuc} and on abstract elementary classes in \cite{baldwinbook09}. 


\subsection*{Abelian groups}

All groups discussed in this paper are abelian groups.

An abelian group $G$ is \emph{torsion-free} if every $0\neq g \in G$ has infinite order. A subgroup $G$ of $H$ is \emph{pure}, denoted by $G \leq_p H$, if for every $n\in \Z$, $nG = G\cap nH$, where $nG = \{ng : g\in G\}$.  When $G, H$ are torsion-free groups, if $pG = G\cap pH$ for every prime number $p$, then $G \leq_p H$.  This follows from the fact that for torsion-free abelian groups if $G\leq_p H$ and $nh\in G$ for $h\in H$ and $0\ne n\in \Z$, then $h\in G$.

Given a group $G$ and $p$ a prime number, let $p^\omega G := \bigcap_{n\in \Z_{\geq 0}} p^n G = \{ g \in G : p^n \mid g \text{ for all } n < \omega \}$. 

\begin{fact}\label{Facts:Uncited} \ 
    Let $p$ be a prime and $G, H$  be torsion-free abelian groups.
    \begin{enumerate}
    \item If $G \leq_p H$, then $p^\omega G = p^\omega H \cap G$. 
        \item $p^\omega G \leq_p G$ and $p^\omega(p^\omega G)= p^\omega G$.
        \item If $\{G_i : i<\lambda\}$ is such that $G_i \leq_p H$ for every $i<\lambda$, then $p^\omega \left(\bigcap_{i<\lambda} G_i\right) = \bigcap_{i<\lambda} \left(p^\omega G_i\right)$. 
    \end{enumerate}
\end{fact}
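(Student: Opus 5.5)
The plan is to prove the three items in order, using only the definition of purity and the remark from the preliminaries that, for torsion-free groups, purity means $nG = G \cap nH$ for all $n$. Throughout I read $p^\omega$ as "divisible by every power of $p$ inside the ambient group". Item (1) unpacks the definition, item (2) needs torsion-freeness to pick a canonical divisor, and item (3) follows from (1) applied twice.

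For (1), I will check both inclusions directly.
\begin{itemize}
\item If $g \in p^\omega G$, then for every $n$ there is $g_n \in G \subseteq H$ with $p^n g_n = g$. Hence $g \in p^\omega H \cap G$.
\item Conversely, let $g \in G \cap p^\omega H$. For each $n$ we have $g \in G \cap p^n H$, and purity gives $G \cap p^n H = p^n G$. So $g \in p^n G$ for all $n$, i.e.\ $g \in p^\omega G$.
\end{itemize}

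For (2), I will prove purity and idempotence separately.
\begin{itemize}
\item \emph{Purity.} It suffices to handle primes $q$; fix $g \in p^\omega G$ with $g = q h$, $h \in G$. We must show $h \in p^\omega G$, i.e.\ that $h$ is divisible by $p^n$ in $G$ for every $n$.
  \begin{itemize}
  \item If $q = p$: for each $n$ write $g = p^{n+1} g_{n+1}$. Then $p(h - p^n g_{n+1}) = 0$, and torsion-freeness gives $h = p^n g_{n+1}$.
  \item If $q \neq p$: choose integers $a, b$ with $a q + b p^n = 1$ and write $g = p^n g_n$. Then $h = a q h + b p^n h = a g + b p^n h = p^n(a g_n + b h)$.
  \end{itemize}
  In both cases $h \in p^n G$ for all $n$, so $h \in p^\omega G$.
\item \emph{Idempotence.} The inclusion $p^\omega(p^\omega G) \subseteq p^\omega G$ is clear. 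For the reverse, take $g \in p^\omega G$ and the $g_n$ with $p^n g_n = g$. Then $p^m g_{n+m} = g_n$, because $p^n(p^m g_{n+m} - g_n) = 0$ and $G$ is torsion-free. So each $g_n$ lies in $p^\omega G$, and $g$ is $p^n$-divisible inside $p^\omega G$ for all $n$.
\end{itemize}

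For (3), set $G := \bigcap_i G_i$.
\begin{itemize}
\item First, $G \leq_p H$: if $g \in G \cap nH$, each $G_i$ is pure, so $g \in nG_i$. By torsion-freeness the divisor $g/n$ is unique in $H$, so it lies in every $G_i$ and hence in $G$.
\item Applying (1) to $G$ and to each $G_i$ gives
\[
p^\omega G = p^\omega H \cap \bigcap_i G_i = \bigcap_i \bigl(p^\omega H \cap G_i\bigr) = \bigcap_i p^\omega G_i .
\]
\end{itemize}

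The main obstacle is the purity part of (2), since it is the only place where torsion-freeness has to be combined with a choice of divisor. The rest reduces to the definitions.
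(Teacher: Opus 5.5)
Your proof is correct in all three parts. The paper states this as a standard fact and gives no proof, so there is no argument to compare against; your write-up supplies the routine verification.

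A few small observations:
\begin{itemize}
\item In (2) you prove only the nontrivial inclusion $p^\omega G\cap qG\subseteq q(p^\omega G)$. The reverse inclusion is immediate because $p^\omega G$ is a subgroup.
\item Your reduction to primes via the paper's remark is legitimate, since $p^\omega G$, as a subgroup of $G$, is torsion-free. Your case $q\neq p$ uses no torsion-freeness at all.
\item Once purity is known, idempotence follows faster by applying (1) to $p^\omega G\leq_p G$: this gives $p^\omega(p^\omega G)=p^\omega G\cap p^\omega G$.
\item In (3) the purity hypothesis is not actually needed. The inclusion $p^\omega(\bigcap_i G_i)\subseteq\bigcap_i p^\omega G_i$ holds by monotonicity. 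Conversely, if $g\in\bigcap_i p^\omega G_i$ and $g=p^n h_i$ with $h_i\in G_i$, then uniqueness of $p^n$-th divisors in the torsion-free $H$ forces all $h_i$ to coincide. Hence $g\in p^n(\bigcap_i G_i)$ for every $n$.
\end{itemize}

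Your route through purity of $\bigcap_i G_i$ and part (1) is equally valid. It just relies on a hypothesis that the direct argument avoids.
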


Finally, given primes $p$ and $q$, let $\Z[1/p, 1/q]$ denote the subring of $\Q$ that results from adjoining the rational numbers $1/p, 1/q$ to $\Z$, i.e.,  \[\Z[1/p, 1/q] = \{ p^{-n}a + q^{-m}b : n, m \in \Z_{\geq 0}, a, b \in \Z\}.\]

\subsection*{Abstract elementary classes}

Abstract elementary classes (or AECs) were introduced by Shelah in the seventies \cite{sh88}. Given a structure $M$, we will write $|M|$ for the underlying set of the structure and $\|M\|$ for its cardinality.

\begin{defin}\label{Def:AEC}
    An \emph{abstract elementary class} is a pair $\K = (K, \leq_{\K})$ where:
    \begin{enumerate}
        \item $K$ is a class of $L$-structures (for some fixed finitary language $L = L(\K)$).
        \item $\leq_{\K}$ is a partial order on $K$.
        \item If $M \leq_\K N$, then $M$ is a substructure of $N$.\footnote{In this paper, the substructure  relation will be the subgroup relation $\leq$ of abelian groups.}
        \item If $M \leq_\K N$ are in $K$ and $f: N \cong N'$ is an isomorphism, then $f[M] \leq_\K N'$ for the image $f[M]$ of $f$. In particular, $K$ is closed under isomorphic images.
        \item Coherence: If $M_0, M_1, M_2 \in K$ satisfy $M_0 \leq_\K M_2$, $M_1 \leq_\K M_2$, and $M_0 \subseteq M_1$, then $M_0 \leq_\K M_1$.
        \item Tarski-Vaught axioms: Suppose $\delta$ is a limit ordinal and $\{M_i \in K : i < \delta\}$ is an increasing chain. Then
        \begin{enumerate}
            \item $M_\delta := \bigcup_{i<\delta} M_i \in K$ and $M_i \leq_\K M_\delta$ for every $i<\delta$.
            \item Smoothness: If there is some $N\in K$ so that for all $i<\delta$ we have $M_i \leq_\K N$, then $M_\delta \leq_\K N$.
        \end{enumerate}
        \item L\"{o}wenheim-Skolem-Tarski axiom: There exists a cardinal $\lambda \geq |L(\K)|+\aleph_0$ such that for any $M\in K$ and $A \subseteq |M|$, there is some $M_0 \leq_\K M$ such that $A \subseteq |M_0|$ and $\|M_0\| \leq |A|+\lambda$. We write $\LS(\K)$ for the minimal such cardinal.
    \end{enumerate}
    
\end{defin}

\begin{remark}
In this paper, $L$ will be the language $L= \{ +, -, 0\}$ of abelian groups, $K$ will be a class of torsion-free abelian groups, and $\leq_\K$ will strengthen the pure subgroup relation $\leq_p$ of abelian groups.
\end{remark}

We say that $f: M \to N$ is a \emph{$\K$-embedding} if $f: M \cong f[M]$ and $f[M] \leq_{\K} N$. Observe that $\K$-embeddings are always injective. For $\lambda$ an infinite cardinal, let $K_\lambda := \{ M \in K : \| M \| = \lambda \}$.

Three common properties of AECs are no maximal models, joint embedding, and amalgamation. We say that $\K$ has \emph{no maximal models} if for every $M\in K$, there exists an $N\in K$ with $M \leq_{\K} N$ and $|M| \subsetneq |N|$. $\K$ has \emph{joint embedding} if for every $M_1, M_2 \in K$, there exist $N \in K$ and $\K$-embeddings $f_1: M_1 \to N$ and $f_2: M_2 \to N$. Lastly, $\K$ has \emph{amalgamation} if for every pair of $\K$-embeddings $f_1: M \to N_1$ and $f_2: M \to N_2$ there are $N\in K$ and $\K$-embeddings $g_1: N_1 \to N$ and $g_2: N_2 \to N$ such that $g_1\circ f_1=g_2\circ f_2$ on $M$.
 
An additional property we leverage in this paper is admitting intersections. This strong property often fails even for elementary classes. These AECs were introduced in \cite{bash}.

\begin{defin}\label{Def:Admit_Int}
    An AEC $\K$ \emph{admits intersections} if for every $N \in K$ and $A \subseteq |N|$,  $\Cl^{N}_{\K}(A):= \bigcap\{M : M \in K  \text{ and } A \subseteq M \leq_{\K} N \} \in K$ and $\Cl^{N}_{\K}(A) \leq_{\K} N$. 
\end{defin}

Galois types are the correct generalization of first-order types to AECs and were first introduced by Shelah in \cite{sh300}.

\begin{defin}\label{Def:Galois_type}
    Let $\K$ be an AEC and $\K^3$ be the set of triples of the form $(b, A, N)$, where $N \in K$, $A \subseteq |N|$, and $b \in N$.
    \begin{enumerate}
        \item For $(b_1, A_1, N_1), (b_2, A_2, N_2) \in \K^3$, we say that $(b_1, A_1, N_1)E^{\K}_{at} (b_2, A_2, N_2)$ if $A := A_1 = A_2$, and there exist $N \in K$ and $\K$-embeddings $f_i : N_i \to N$ for $i \in \{1, 2\}$ such that $f_1 \restr A = f_2\restr A = \id_A$ and $f_1(b_1) = f_2(b_2)$.
        \item Let $E^{\K}$ be the transitive closure of the relation $E^{\K}_{at}$. One may easily verify that $E^{\K}$ is an equivalence relation.
        \item For $(b, A, N) \in \K^3$, the \emph{Galois type} of $b$ over $A$ in $N$, denoted by $\gtp_{\K} (b / A; N)$, is the $E^{\K}$-equivalence class of $(b, A, N)$.
        \item For $M \in K$, let $\gS_{\K}(M) := \{ \gtp_{\K}(b/M; N) : M \leq_{\K} N \text{ and } b \in N\}$. 
        \item For $p = \gtp_{\K}(b/M; N) \in \gS_\K(M)$ and $A \subseteq |M|$, let $p\restr A := \gtp_{\K}(b/A; N)$. 
    \end{enumerate}
\end{defin}

For an AEC that admits intersections, there is a nicer characterization of Galois types.

\begin{fact}[{\cite[Proposition 2.18]{vaseyu}}]\label{Prop:Galois_type_Vasey}
    Let $\K$ be an AEC that admits intersections. Let $N_i \in K$, $A \subseteq N_i$ and $b_i \in N_i$ for $i\in \{1,2\}$. The following are equivalent:
    
    \begin{enumerate}
    \item $\gtp_{\K} (b_1 / A; N_1) = \gtp_{\K} (b_2 / A; N_2) $.
        \item There exists $f : \Cl_{\K}^{N_1}(\{b_1\} \cup A) \cong \Cl_{\K}^{N_2}(\{b_2\} \cup A)$ such that $f \restr A =\id_A$ and $f (b_1)=b_2$.
    \end{enumerate}
\end{fact}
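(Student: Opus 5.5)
The equivalence rests on one invariance property of closures, so the plan is to prove that first and then handle the two directions. For (2)$\Rightarrow$(1) I will go through the intermediate triple $(b_1,A,C_1)$. For (1)$\Rightarrow$(2) I will show that the relation described in (2) is an equivalence relation containing $E^{\K}_{at}$. Throughout, write $C_i := \Cl^{N_i}_{\K}(\{b_i\}\cup A)$. Since $\K$ admits intersections, $C_i\in K$ and $C_i\leq_\K N_i$.

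\emph{Key lemma.} If $g: M\to N$ is a $\K$-embedding and $B\subseteq |M|$, then $g[\Cl^M_\K(B)] = \Cl^N_\K(g[B])$.
First, put $M' = g[M]\leq_\K N$. By invariance of $\leq_\K$ under isomorphisms, $g[\Cl^M_\K(B)] = \Cl^{M'}_\K(g[B])$. It therefore suffices to show $\Cl^{M'}_\K(D) = \Cl^N_\K(D)$ for every $D\subseteq M'$.
\begin{itemize}
\item For $\supseteq$: every $M_0$ with $D\subseteq M_0\leq_\K M'$ satisfies $M_0\leq_\K N$ by transitivity. So every set in the intersection defining $\Cl^{M'}_\K(D)$ also occurs in the intersection defining $\Cl^N_\K(D)$, which gives $\Cl^N_\K(D)\subseteq \Cl^{M'}_\K(D)$.
\item For $\subseteq$: $M'$ itself is a $\K$-substructure of $N$ containing $D$, so $\Cl^N_\K(D)\subseteq M'$. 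We also have $\Cl^N_\K(D)\leq_\K N$ and $M'\leq_\K N$, so coherence gives $\Cl^N_\K(D)\leq_\K M'$. Hence $\Cl^N_\K(D)$ appears in the intersection defining $\Cl^{M'}_\K(D)$, which gives the reverse inclusion.
\end{itemize}
I expect this lemma, and in particular the coherence step, to be the main point requiring care.

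\emph{(2)$\Rightarrow$(1).} Let $f: C_1\cong C_2$ with $f\restr A=\id_A$ and $f(b_1)=b_2$. I will use two atomic steps.
\begin{itemize}
\item $(b_1,A,N_1)\,E^{\K}_{at}\,(b_1,A,C_1)$: take the common model $N_1$, with $f_1=\id_{N_1}$ and $f_2$ the inclusion $C_1\hookrightarrow N_1$. The inclusion is a $\K$-embedding because $C_1\leq_\K N_1$.
\item $(b_1,A,C_1)\,E^{\K}_{at}\,(b_2,A,N_2)$: take the common model $N_2$, with $f_1 = f$ viewed as a map $C_1\to N_2$ and $f_2=\id_{N_2}$. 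Here $f$ is a $\K$-embedding since $f[C_1]=C_2\leq_\K N_2$. Both maps fix $A$, and $f(b_1)=b_2$.
\end{itemize}
Transitivity of $E^{\K}$ then yields (1).

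\emph{(1)$\Rightarrow$(2).} Let $R$ be the relation on triples with the same parameter set $A$ given by condition (2). $R$ is reflexive, symmetric and transitive: use the identity, inverses and composition of isomorphisms fixing $A$. So it suffices to show $E^{\K}_{at}\subseteq R$, since then $E^{\K}\subseteq R$.

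Suppose $f_i: N_i\to N$ are $\K$-embeddings with $f_i\restr A=\id_A$ and $f_1(b_1)=f_2(b_2)=:c$. By the key lemma, $f_i[C_i]=\Cl^N_\K(\{c\}\cup A)$ for $i=1,2$. Hence $(f_2\restr C_2)^{-1}\circ(f_1\restr C_1): C_1\cong C_2$. This map fixes $A$ and sends $b_1$ to $b_2$, which is exactly (2).
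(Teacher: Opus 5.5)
Your proof is correct and complete; the key lemma, that closures are preserved by $\K$-embeddings via isomorphism invariance plus coherence, is justified properly, and so is the reduction of $E^{\K}$ to $E^{\K}_{at}$ using the fact that the isomorphism relation in (2) is an equivalence relation. The paper gives no proof of this fact and only cites Vasey's Proposition 2.18, and your argument is essentially the standard proof of that cited result.
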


Since Galois types are semantic objects, they might not be determined by their restriction to subsets of a given size. This leads to the notion of being tame.

\begin{defin}\label{Def:Tame}

Let $\lambda, \theta$ be infinite cardinals. $\K$ is \emph{$(<\lambda, \theta)$-tame} if for every $M \in K$ with $\| M \| = \theta$ and $p, q \in \gS_\K(M)$, if $p \neq q$, then there is $A \subseteq |M|$ such that $|A|< \lambda$ and $p \restr A \neq q \restr A$.

    We say that $\K$ is \emph{$(<\lambda)$-tame} if $\K$ is $(<\lambda, \theta)$-tame for every infinite cardinal $\theta$. We say $\K$ \emph{$\lambda$-tame} if it is $(<\lambda^+)$-tame.
\end{defin}

Stability is a key model-theoretic dividing line. Although it is not the main focus of this paper, we will study it for the classes we introduce in the next sections.

\begin{defin}\
\begin{enumerate}
    \item $\K$ is \emph{stable in $\lambda$} if  for any $M \in K$ with $\|M\|=\lambda$ it holds that $| \gS_{\K}(M) | \leq~\lambda$.
    
    We say that $\K$ is \emph{stable} if there exists a cardinal $\lambda \geq \LS(\K)$ for which $\K$ is stable in $\lambda$.

    \item $($\cite[Definition 2.4]{ps}$)$ We say that $\K$ is \emph{almost stable in $\lambda$}  if for any $M \leq_{\K} N$  with $\|M\| = \lambda$ it holds that $|\gS_{\K}(M;N)| \leq \lambda$, where $\gS_{\K}(M;N) := \{\gtp_{\K}(b/M; N) : b\in N\}$.

       We say that $\K$ is \emph{almost stable} if there exist arbitrarily large cardinals $\lambda \geq \LS(\K)$ for which $\K$ is almost stable in $\lambda$.
\end{enumerate}
    
\end{defin}

\begin{remark}
   If $\K$ has amalgamation. then $\K$ is almost stable in $\lambda$ if and only if  $\K$ is stable in $\lambda$, see \cite[Observation 2.6]{ps}. 
\end{remark}

These are all the notions of AECs used in this paper. Detailed introductions to abstract elementary classes from an algebraic perspective are given in \cite{bon25} and \cite[Section 2]{maztor}.

\subsection*{Torsion-free groups with pure embeddings}
We will denote the abstract elementary class of torsion-free abelian groups with pure embeddings by $\K_{TF} = (TF, \leq_p)$. This AEC is well-understood and we will leverage its properties throughout this paper.


\begin{fact}\label{Thm:Properties_TF}
    $\K_{TF}$ is an abstract elementary class with $\LS(\K_{TF}) = \aleph_0$ such that:
    \begin{enumerate}
        \item $\K_{TF}$ has no maximal models, joint embedding, and amalgamation.
        \item $\K_{TF}$ admits intersections.
        \item $($\cite[Lemma 4.6]{maz}$)$ $\K_{TF}$ is $(<\aleph_0)$-tame.
        \item $($\cite[Theorem 0.3]{baldwine}$)$ $\K_{TF}$ is stable in $\lambda$ if and only if $\lambda^{\aleph_0}=\lambda$.
    \end{enumerate}
\end{fact}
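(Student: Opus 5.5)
The plan is to verify the AEC axioms for $(TF, \leq_p)$ directly, then prove items (1) and (2) by elementary algebra, and take items (3) and (4) from the cited sources (\cite[Lemma 4.6]{maz} and \cite[Theorem 0.3]{baldwine}).

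\textbf{AEC axioms.} Clearly $\leq_p$ refines the subgroup relation and is preserved by isomorphisms.
\begin{itemize}
\item \emph{Transitivity.} If $A \leq_p B \leq_p C$, then $nA = A \cap nB = A \cap (B \cap nC) = A \cap nC$.
\item \emph{Coherence.} If $M_0 \leq_p M_2$ and $M_0 \subseteq M_1 \subseteq M_2$, then $nM_0 \subseteq M_0 \cap nM_1 \subseteq M_0 \cap nM_2 = nM_0$.
\item \emph{Chains.} A union of a chain of torsion-free groups is torsion-free. If $m \in M_i$ and $m = nx$ with $x \in M_j$ for some $j \geq i$, then $m \in M_i \cap nM_j = nM_i$, so $M_i \leq_p \bigcup_k M_k$.
\item \emph{Smoothness.} If every $M_i \leq_p N$ and $m = nx \in M_\delta$ with $x \in N$, pick $i$ with $m \in M_i$. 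Then $m \in nM_i \subseteq nM_\delta$.
\item \emph{L\"owenheim--Skolem.} Given $A \subseteq M$, close under the group operations and under choosing, for each $g$ and each $n$ with $g \in nM$, a witness $h \in M$ with $nh = g$. Torsion-freeness makes such $h$ unique. After $\omega$ steps this yields a pure subgroup of size at most $|A| + \aleph_0$, so $\LS(\K_{TF}) = \aleph_0$.
\end{itemize}

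\textbf{Item (1).} No maximal models: $M \leq_p M \oplus \Z$. Joint embedding: $M_1, M_2 \leq_p M_1 \oplus M_2$, and direct sums of torsion-free groups are torsion-free.

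For amalgamation, given pure embeddings $f_\ell : M \to N_\ell$, I take the pushout $N = (N_1 \oplus N_2)/D$, where $D = \{(f_1(m), -f_2(m)) : m \in M\}$, with the natural maps $g_\ell$. Then $g_1 \circ f_1 = g_2 \circ f_2$ on $M$ by construction. The key computation, which I expect to be the main obstacle, uses purity of $M$ three times.
\begin{itemize}
\item \emph{Torsion-freeness.} Suppose $k(a,b) \in D$ with $k \neq 0$, so $ka = f_1(m)$ and $kb = -f_2(m)$. Purity of $f_1[M]$ in $N_1$ gives $m = km'$. Torsion-freeness then gives $a = f_1(m')$ and $b = -f_2(m')$, so $(a,b) \in D$.
\item \emph{Injectivity of $g_1$.} If $(a,0) \in D$, then $f_2(m) = 0$, so $m = 0$ and $a = 0$. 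The argument for $g_2$ is symmetric.
\item \emph{Purity of $g_1[N_1]$.} Suppose $(a,0) \equiv k(x,y) \pmod D$. Then $ky = f_2(m)$ for some $m$, so purity gives $m = km'$. Hence $a = k\bigl(x + f_1(m')\bigr)$. Purity of $g_2[N_2]$ follows symmetrically.
\end{itemize}

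\textbf{Item (2).} It suffices to show that any intersection of pure subgroups $G_i \leq_p H$ is pure in $H$, since it is then torsion-free and contains $A$. Suppose $g \in \bigcap_i G_i$ and $g = nh$ with $h \in H$. For each $i$, $g \in G_i \cap nH = nG_i$, so $g = nh_i$ with $h_i \in G_i$. Torsion-freeness forces $h_i = h$, so $h \in \bigcap_i G_i$. Hence $\Cl^N_{\K_{TF}}(A)$ is pure in $N$ and lies in $TF$.
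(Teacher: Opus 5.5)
Your proposal is correct and matches the paper's treatment: the paper states this as a Fact without proof, cites the same sources for (3) and (4), and treats the AEC axioms, (1), and (2) as standard. Your direct verifications are exactly the routine arguments taken for granted there: the pushout $(N_1\oplus N_2)/D$ for amalgamation, and for (2) the observation from the preliminaries that $nh \in G \leq_p H$ with $n \neq 0$ forces $h \in G$.
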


\section{Failure of $(<\aleph_0)$-tameness}\label{sectio-3}

We define the first abstract elementary class of interest in this paper.

\begin{defin}\label{Def:Kl(p_1,p_2)}
    Let $p_1, p_2$ be distinct primes. We define $\Kl=\Kl(p_1, p_2) = (K_1, \leq_{\Kl})$ as follows:
    \begin{enumerate}
        \item $K_1$ is the class of torsion-free abelian $G$ such that $|p_1^\omega G|, |p_2^\omega G| \leq \aleph_0$.
        \item For $G_1, G_2 \in K_1$, we have that $G_1 \leq_{\Kl} G_2$ if
        \begin{enumerate}
            \item $G_1 \leq_p G_2$,
            \item $p_1^\omega G_1 = p_1^\omega G_2$, and
            \item either $G_1 = p_1^\omega G_1$ or $p_2^\omega G_1 = p_2^\omega G_2$.
        \end{enumerate}
    \end{enumerate}
\end{defin}

The objective of this section is to show that this class of torsion-free abelian groups is an AEC such that the following properties hold.

\begin{theorem}\label{Thm:main_Kl}
    $\Kl=\Kl(p_1, p_2) = (K_1, \leq_{\Kl})$ is an abstract elementary class with $\LS(\Kl)=\aleph_0$ such that:
    \begin{enumerate}
        \item $\Kl$ is not $(<\aleph_0)$-tame.
        \item $\Kl$ is $\aleph_0$-tame.
        \item $\Kl$ is stable in $\lambda$ if and only if $\lambda^{\aleph_0} = \lambda$.
        \item $\Kl$ has no maximal models, but fails joint embedding and amalgamation.
        \item $\Kl$ admits intersections.
    \end{enumerate}
\end{theorem}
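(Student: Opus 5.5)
The plan is to verify the AEC axioms first, then prove (5), since admitting intersections lets us use Fact~\ref{Prop:Galois_type_Vasey} for Galois types, and then handle (1)--(4) in turn.

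\emph{AEC axioms and (5).} Isomorphism invariance is clear. For coherence, let $G_0\subseteq G_1$ with $G_0,G_1\leq_{\Kl} G_2$. Purity of $G_0$ in $G_1$ comes from $\K_{TF}$. Using Fact~\ref{Facts:Uncited}(1), $p_1^\omega G_0=p_1^\omega G_2\supseteq p_1^\omega G_1\supseteq p_1^\omega G_0$, which gives condition (b). Clause (c) is handled the same way.

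For unions of chains, $p_i^\omega(\bigcup G_j)=\bigcup p_i^\omega G_j$ by purity. Condition (b) makes the $p_1$-parts constant along the chain. For clause (c), split into two cases: either eventually $G_j\neq p_1^\omega G_j$, in which case the $p_2$-parts are eventually constant, or all $G_j=p_1^\omega G_j$, in which case the union is countable-bounded anyway. This shows the union stays in $K_1$ and that smoothness holds. I expect some care is needed when mixing the two cases, but it is routine.

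For intersections, given $A\subseteq N$, let $\Cl(A)$ be the pure closure in $N$ of $A\cup p_1^\omega N$, together with $p_2^\omega N$ unless that pure closure is already $p_1$-divisible. Then check that this is the least $\leq_{\Kl}$-submodel; this uses Fact~\ref{Facts:Uncited}(3). LS $=\aleph_0$ follows, since $\Cl(A)$ has size at most $|A|+\aleph_0$.

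\emph{(1), the main obstacle.} Take $M=\prod_{n<\omega}\Z e_n$, or rather a group $M$ with trivial $p_1^\omega$, all of whose $\Kl$-extensions have $p_1^\omega$ small. We want two elements $b_1,b_2$ in extensions $N_1,N_2$ such that, over every finite $A\subseteq M$, the closures $\Cl(b_i A)$ are isomorphic fixing $A$, while over all of $M$ they are not. Fact~\ref{Prop:Galois_type_Vasey} reduces this to comparing closures.

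The idea, from the introduction, is to code divisibility by $p_1,p_2$ so that:
\begin{itemize}
\item over finite $A$, an isomorphism of closures exists, because it only needs to respect finitely many coordinates;
\item over $M$, any isomorphism would induce an endomorphism of $\prod\Z e_n$ that is determined by its values on the $e_n$ (Theorem~\ref{Prop:Homo_Unique}), and this yields a contradiction with the coded divisibility data.
\end{itemize}
I expect to need this homomorphism-rigidity result; it is the heart of the argument. Concretely, I would try $N_1$ and $N_2$ obtained by adjoining $p_1$-divisible elements coded so that they differ only at the ``limit'' coordinate.

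\emph{(2)--(4).} For (2), $\aleph_0$-tameness should follow because the failure in (1) lives in the countable parts $p_i^\omega$. Given $p\ne q$ over $M$, take $A$ to be a countable set containing $p_1^\omega M$ and $p_2^\omega M$ together with witnesses. Then the closures over $A$ already determine the $\K_{TF}$-type over $M$, and $\K_{TF}$ is $(<\aleph_0)$-tame by Fact~\ref{Thm:Properties_TF}.

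For (3), the types over $M$ inject into $\K_{TF}$-types plus countably many extra data, which gives the upper bound $\lambda^{\aleph_0}$. For the lower bound, use Fact~\ref{Thm:Properties_TF}(4) with models whose $p_i^\omega$ parts are trivial.

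For (4):
\begin{itemize}
\item No maximal models: add a free summand $\Z$, which leaves $p_1^\omega$ and $p_2^\omega$ unchanged.
\item Joint embedding fails: take two groups with distinct nonzero countable $p_1^\omega$ parts, say $\Z[1/p_1]$ versus $\Z[1/p_1]^2$. Condition (b) forces $p_1^\omega$ to be preserved in any common extension, which is impossible.
\item Amalgamation fails: adapt the same argument over a common base with $p_1^\omega M=0$, using clause (c).
\end{itemize}
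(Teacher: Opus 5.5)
There is a genuine gap in (1): the base model you propose cannot witness non-tameness. If $p_1^\omega M=0$ and $M\neq 0$ (e.g.\ $M=\prod_{n<\omega}\Z e_n$), then $M\neq p_1^\omega M$, so clause (c) forces $p_2^\omega N=p_2^\omega M\subseteq M$ for every $M\leq_{\Kl}N$. Consequently:
\begin{enumerate}
\item $\Kl$-closures over $M$ are just pure closures (Lemma~\ref{Lem:Kl_closure_equal_to_TF}(2));
\item $\Kl$-types over $M$ coincide with $\K_{TF}$-types (Proposition~\ref{Prop:Kl_gtp_equal_to_TF});
\item the reduction you yourself use for (2), i.e.\ Case~2 of Lemma~\ref{Lem:Kl_aleph1_tame}, shows that $M$ is a $(<\aleph_0)$-tame base. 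Agreement of $\Kl$-types over finite sets gives agreement of $\K_{TF}$-types over finite sets. Tameness of $\K_{TF}$ then gives agreement over $M$, and hence agreement of the $\Kl$-types over $M$.
\end{enumerate}
Your $\prod_{n<\omega}\Z e_n$ is also uncountable, and $\Kl$ is $(<\aleph_0,\theta)$-tame for all uncountable $\theta$. Your concrete suggestion of adjoining $p_1$-divisible elements is likewise impossible, since clause (b) forces $p_1^\omega N=p_1^\omega M$.

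The missing idea is that the witness must live over a countable $M$ with $M=p_1^\omega M$. There clause (c) is vacuous, so a $\Kl$-extension may have a strictly larger $p_2$-divisible part. The paper takes $M=\bigoplus_{n<\omega}\Z[1/p_1,1/p_2]e_n$ and a countable $N$ with $p_1^\omega N=M$ and $p_2^\omega N=N$. For $a\notin M$, the $\Kl$-closure of $\{a\}\cup M$ is not $p_1$-divisible, so it must contain $p_2^\omega N=N$ (Lemma~\ref{Lem:Kl_closure_equal_to_TF}(1)). By Fact~\ref{Prop:Galois_type_Vasey}, equality of types over $M$ then yields an automorphism of $N$ fixing every $e_n$.

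The rigidity therefore has to hold for a \emph{countable} $N$, and Specker-type rigidity of products does not supply one. For instance, $\langle M,\prod_{n}\Z[1/p_2]e_n\rangle$ has $p_1^\omega$ of size $2^{\aleph_0}$, since it contains all $(\epsilon_kp_1^k)_k$ with $\epsilon\in 2^\omega$; so it is not a $\Kl$-extension of $M$. The paper instead adjoins $\Z[1/p_2]y_\alpha$ and $\Z[1/p_2]z_\alpha$, built from $c_kq_1^k+d_kq_2^k=1$ with auxiliary primes $q_1,q_2$. Then $q_1^\omega N=q_2^\omega N=0$ makes Theorem~\ref{Prop:Homo_Unique} an elementary divisibility argument. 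With this $N$, $a=(1,1,\ldots)$ and $b=a+e_0$ have different types over $M$, while the map $x\mapsto x+x_ne_0$ shows they agree over finite subsets.

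The same misreading of clause (c) affects your failure of amalgamation. Over a nonzero base with $p_1^\omega M=0$, both extensions must share $M$'s $p_2$-divisible part, so the clause-(c) obstruction you have in mind is unavailable. You need a $p_1$-divisible base such as $\Z[1/p_1]$, with extensions $\Z[1/p_1]\oplus\Z$ and $\Z[1/p_1]\oplus\Z[1/p_2]$.
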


We show that $\Kl$ is an AEC after the following propositions.

\begin{prop}\label{Lem:Lemma1_Kl}
    Let $G_0 \leq_{\Kl} G_1$. If $G_1 = p_1^\omega G_1$, then $G_0=G_1$.
\end{prop}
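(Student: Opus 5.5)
The argument is short and uses only the definition of $\leq_{\Kl}$ (Definition~\ref{Def:Kl(p_1,p_2)}). From $G_0 \leq_{\Kl} G_1$ we get condition (b), namely $p_1^\omega G_0 = p_1^\omega G_1$. By hypothesis $G_1 = p_1^\omega G_1$, so the right-hand side is simply $G_1$. Hence $p_1^\omega G_0 = G_1$.

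Next we use that $p_1^\omega G_0 \subseteq G_0$, which holds by the definition $p_1^\omega G_0 = \bigcap_{n} p_1^n G_0$ and the fact that $p_1^0 G_0 = G_0$. Combining this with the previous step gives $G_1 = p_1^\omega G_0 \subseteq G_0$. Since $G_0 \leq_{\Kl} G_1$ includes $G_0 \leq_p G_1$, we also have $G_0 \subseteq G_1$. The two inclusions give $G_0 = G_1$.

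There is no real obstacle here. The only point to check is that the computation of $p_1^\omega$ is done in $G_0$ itself, so that it lands inside $G_0$; this is exactly what condition (b) provides. Conditions (a) and (c) and Fact~\ref{Facts:Uncited} are not needed beyond the containment $G_0 \subseteq G_1$.
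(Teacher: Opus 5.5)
Your proposal is correct and follows essentially the same argument as the paper: condition (2)(b) gives $G_1 = p_1^\omega G_1 = p_1^\omega G_0 \subseteq G_0$, and $G_0 \subseteq G_1$ gives equality. You simply spell out the containment $p_1^\omega G_0 \subseteq G_0$ and the inclusion $G_0 \subseteq G_1$, which the paper leaves implicit.
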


\begin{proof}
Since $G_0 \leq_{\Kl} G_1$,   $p_1^\omega G_0 = p_1^\omega G_1$, so $G_1 = p_1^\omega G_1 \subseteq G_0$ and $G_0=G_1$.
\end{proof}

\begin{prop}\label{Lem:Transitivity_Coherence_Kl}
    The partial order $\leq_{\Kl}$ of $\Kl$ is transitive and satisfies coherence. 
    
\end{prop}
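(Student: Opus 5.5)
Both properties reduce to the definition of $\leq_{\Kl}$, using Fact~\ref{Facts:Uncited}(1) to pass $p^\omega$ along pure subgroups and Proposition~\ref{Lem:Lemma1_Kl} to handle the degenerate alternative in clause (c).

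\emph{Transitivity.} Suppose $G_0 \leq_{\Kl} G_1 \leq_{\Kl} G_2$. Clause (a) holds because purity is transitive. Clause (b) holds because $p_1^\omega G_0 = p_1^\omega G_1 = p_1^\omega G_2$. For clause (c), if $G_0 = p_1^\omega G_0$ we are done. Otherwise $p_2^\omega G_0 = p_2^\omega G_1$. We then split on clause (c) for the pair $G_1 \leq_{\Kl} G_2$:
\begin{itemize}
\item If $p_2^\omega G_1 = p_2^\omega G_2$, we chain the two equalities to get $p_2^\omega G_0 = p_2^\omega G_2$.
\item If instead $G_1 = p_1^\omega G_1$, then Proposition~\ref{Lem:Lemma1_Kl} applied to $G_0 \leq_{\Kl} G_1$ gives $G_0 = G_1$. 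Hence $G_0 = p_1^\omega G_0$, contradicting our case assumption.
\end{itemize}
So transitivity is immediate, with Proposition~\ref{Lem:Lemma1_Kl} disposing of the awkward case.

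\emph{Coherence.} Suppose $G_0 \leq_{\Kl} G_2$, $G_1 \leq_{\Kl} G_2$ and $G_0 \subseteq G_1$. Clause (a): since $G_0 \leq_p G_2$ and $G_1 \leq G_2$, we have $nG_0 \subseteq G_0 \cap nG_1 \subseteq G_0 \cap nG_2 = nG_0$, so $G_0 \leq_p G_1$. Clause (b): from $G_0, G_1 \leq_p G_2$ and Fact~\ref{Facts:Uncited}(1), we have $p_1^\omega G_0 = p_1^\omega G_2 = p_1^\omega G_1$. Clause (c): assume $G_0 \neq p_1^\omega G_0$, so $p_2^\omega G_0 = p_2^\omega G_2$. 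Then
\[
p_2^\omega G_0 \subseteq p_2^\omega G_1 = p_2^\omega G_2 \cap G_1 \subseteq p_2^\omega G_2 = p_2^\omega G_0,
\]
using Fact~\ref{Facts:Uncited}(1) for $G_1 \leq_p G_2$ together with monotonicity of $p_2^\omega$. Hence $p_2^\omega G_0 = p_2^\omega G_1$. This argument does not need clause (c) for $G_1 \leq_{\Kl} G_2$ at all.

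The only step needing care is the subcase $G_1 = p_1^\omega G_1$ in transitivity, which is the one place where clause (c) genuinely interacts with the rest of the definition; Proposition~\ref{Lem:Lemma1_Kl} resolves it. Everything else is routine verification.
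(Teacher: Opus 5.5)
Your proof is correct. Transitivity is argued exactly as in the paper. The only cosmetic difference is in the subcase $G_1 = p_1^\omega G_1$: you derive a contradiction from $G_0 = G_1$, whereas the paper simply notes that $G_0 = G_1 \leq_{\Kl} G_2$.

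For coherence, you handle clause (c) differently. The paper splits into three cases:
\begin{itemize}
\item $G_0 = p_1^\omega G_0$, which is immediate;
\item $G_1 = p_1^\omega G_1$, where $G_1 = p_1^\omega G_2 = p_1^\omega G_0 \subseteq G_0$ forces $G_0 = G_1$;
\item neither, where it uses clause (c) of $G_1 \leq_{\Kl} G_2$ to get $p_2^\omega G_0 = p_2^\omega G_2 = p_2^\omega G_1$.
\end{itemize}
You instead sandwich $p_2^\omega G_0 \subseteq p_2^\omega G_1 \subseteq p_2^\omega G_2 = p_2^\omega G_0$, using only $G_0 \subseteq G_1 \subseteq G_2$.

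Your route is shorter. It also shows that clause (c) for the pair $G_1 \leq_{\Kl} G_2$ plays no role in coherence. Plain monotonicity of $p_2^\omega$ already gives the middle inclusion, so Fact~\ref{Facts:Uncited}(1) is not needed there. The paper's case split buys nothing extra beyond directly mirroring the disjunctive form of clause (c).

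One small citation slip: the equalities $p_1^\omega G_0 = p_1^\omega G_2 = p_1^\omega G_1$ come directly from clause (b) of the hypotheses $G_0, G_1 \leq_{\Kl} G_2$. They do not follow from Fact~\ref{Facts:Uncited}(1), which on its own only yields $p_1^\omega G_0 = p_1^\omega G_2 \cap G_0$.
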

\begin{proof}

We show transitivity first. Suppose $G_0 \leq_{\Kl} G_1 \leq_{\Kl} G_2$. Then $G_0 \leq_p G_2$ by the transitivity of $\leq_p$. Further, $p_1^\omega G_0 = p_1^\omega G_1 = p_1^\omega G_2$. We are left to check Condition (2)(c) of Definition \ref{Def:Kl(p_1,p_2)}.
    
    If $G_0=p_1^\omega G_0$, then by the definition of $\leq_{\Kl}$ we have $G_0 \leq_{\Kl} G_2$. Otherwise, $p_2^\omega G_0 = p_2^\omega G_1$. Since $G_1 \leq_{\Kl} G_2$, either $G_1 = p_1^\omega G_1$ or $p_2^\omega G_1 = p_2^\omega G_2$. In the former case, by Proposition \ref{Lem:Lemma1_Kl}, $G_0=G_1$, so $G_0 \leq_{\Kl} G_2$. In the latter case, we have $p_2^\omega G_0 = p_2^\omega G_1 = p_2^\omega G_2$, so $G_0 \leq_{\Kl} G_2$ and $\leq_{\Kl}$ is transitive.

    We show coherence next. Suppose $G_0, G_1, G_2 \in K_1$ satisfy $G_0 \leq_{\Kl} G_2$, $G_1 \leq_{\Kl} G_2$, and $G_0 \subseteq G_1$. By the coherence of $\leq_p$, we have $G_0 \leq_p G_1$. Further, $p_1^\omega G_0 = p_1^\omega G_2 = p_1^\omega G_1$. We check Condition (2)(c) of Definition \ref{Def:Kl(p_1,p_2)} by dividing into  three cases:

    \underline{Case 1:} $G_0 = p_1^\omega G_0$. Then $G_0 \leq_{\Kl} G_1$ by the definition of $\leq_{\Kl}$.

    \underline{Case 2:}  $G_1 = p_1^\omega G_1$. Then $G_1 = p_1^\omega G_1 = p_1^\omega G_2 = p_1^\omega G_0 \subseteq G_0$ by Condition (2)(b) of Definition \ref{Def:Kl(p_1,p_2)}. Hence $G_0 \leq_{\Kl} G_1$.

    \underline{Case 3:} $G_0 \neq p_1^\omega G_0$ and  $G_1 \neq p_1^\omega G_1$. Then $p_2^\omega G_0 = p_2^\omega G_2=  p_2^\omega G_1$. Hence $G_0 \leq_{\Kl} G_1$. 
\end{proof}

We now prove $\Kl$ is an AEC.

\begin{lemma}\label{Prop:Kl_is_AEC}
    $\Kl$ is an AEC with $\LS(\Kl)=\aleph_0$.
\end{lemma}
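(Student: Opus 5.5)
We verify the axioms of Definition~\ref{Def:AEC} one at a time. Axioms (1)--(4) are immediate. $K_1$ is a class of abelian groups in $L=\{+,-,0\}$. Isomorphisms preserve purity and commute with $p^\omega$, so both $K_1$ and $\leq_{\Kl}$ are closed under isomorphism. Reflexivity and antisymmetry of $\leq_{\Kl}$ are clear, and transitivity together with coherence is Proposition~\ref{Lem:Transitivity_Coherence_Kl}.

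For the Tarski--Vaught axioms, let $\{G_i : i<\delta\}$ be a $\leq_{\Kl}$-increasing chain with union $G_\delta$.
\begin{itemize}
\item \emph{Membership of the union.} $G_\delta$ is torsion-free and each $G_i \leq_p G_\delta$. By Fact~\ref{Facts:Uncited}(1), $p^\omega G_\delta \cap G_i = p^\omega G_i$, so $p^\omega G_\delta = \bigcup_i p^\omega G_i$ for $p\in\{p_1,p_2\}$.
\item \emph{The $p_1$ part.} Since $p_1^\omega G_i = p_1^\omega G_0$ for all $i$, we get $p_1^\omega G_\delta = p_1^\omega G_0$, which is countable.
\item \emph{The $p_2$ part, case analysis.} If some $G_i$ satisfies $G_i \neq p_1^\omega G_i$, then every $G_j$ with $j\geq i$ does too, because $G_j \supseteq G_i$ and $p_1^\omega G_j = p_1^\omega G_i$. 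Hence $p_2^\omega G_j = p_2^\omega G_{j'}$ for all $i\le j\le j'$, so $p_2^\omega G_\delta = p_2^\omega G_i$, which is countable. Otherwise every $G_i = p_1^\omega G_i$, and Proposition~\ref{Lem:Lemma1_Kl} makes the chain constant, so $G_\delta = G_0$.
\item \emph{$G_i \leq_{\Kl} G_\delta$.} The same computation gives $p_1^\omega G_i = p_1^\omega G_\delta$. Clause (c) holds either because $G_i = p_1^\omega G_i$ or because $p_2^\omega G_i = p_2^\omega G_\delta$ by the case analysis.
\item \emph{Smoothness.} Suppose $G_i \leq_{\Kl} H$ for all $i$. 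Purity of $G_\delta$ in $H$ is standard. We have $p_1^\omega G_\delta = p_1^\omega G_0 = p_1^\omega H$. For clause (c): if $G_\delta = p_1^\omega G_\delta$ we are done. Otherwise some $G_i \neq p_1^\omega G_i$ (else the chain is constant), so $p_2^\omega G_\delta = p_2^\omega G_i = p_2^\omega H$.
\end{itemize}

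For the L\"owenheim--Skolem--Tarski axiom, take $G \in K_1$ and $A \subseteq G$. Set $A' = A \cup p_1^\omega G \cup p_2^\omega G$, which has size $\le |A|+\aleph_0$. Let $G_0$ be a countable-plus-$|A|$ pure subgroup of $G$ containing $A'$, obtained from the L\"owenheim--Skolem property of $\K_{TF}$ (Fact~\ref{Thm:Properties_TF}). Then $p_j^\omega G \subseteq G_0$, and by Fact~\ref{Facts:Uncited}(1), $p_j^\omega G_0 = p_j^\omega G \cap G_0 = p_j^\omega G$. Hence $G_0 \in K_1$ and $G_0 \leq_{\Kl} G$, with both equalities in (b) and (c) holding outright. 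So $\LS(\Kl) \le \aleph_0$.

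Equality $\LS(\Kl)=\aleph_0$ follows from the requirement $\LS \ge |L|+\aleph_0$. Alternatively, take $G=\Z[1/p_1]^{(\aleph_0)}$ (countably many copies) and $A=\emptyset$: here $p_1^\omega G = G$, so any $G_0 \leq_{\Kl} G$ equals $G$, which is infinite.

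The main obstacle I expect is the union and smoothness step, specifically clause (c). The disjunction there is not preserved in an obvious way, and the argument relies on the dichotomy that either the chain is eventually in the ``$G\neq p_1^\omega G$'' regime, so that $p_2^\omega$ stabilizes, or it is constant by Proposition~\ref{Lem:Lemma1_Kl}.
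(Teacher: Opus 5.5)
Your proposal is correct and follows essentially the same route as the paper. The union and smoothness steps rest on the same dichotomy: either every $G_i = p_1^\omega G_i$, so the chain is constant by Proposition~\ref{Lem:Lemma1_Kl}, or $p_2^\omega G_j$ stabilizes from some index on and equals $p_2^\omega G_\delta$. The L\"owenheim--Skolem--Tarski step likewise takes a small pure subgroup containing $A \cup p_1^\omega G \cup p_2^\omega G$, exactly as the paper does, and the remaining differences (deriving $p^\omega G_\delta = \bigcup_{i<\delta} p^\omega G_i$ from Fact~\ref{Facts:Uncited}(1), and noting the bound $\LS(\Kl)\geq\aleph_0$) are cosmetic.
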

\begin{proof}
    It is easy to check Properties (1) through (4) of Definition \ref{Def:AEC}. We have (5) (coherence) by Proposition \ref{Lem:Transitivity_Coherence_Kl}. We check (6) (Tarski-Vaught axioms) and (7) (L\"owenheim-Skolem-Tarski axiom).

    \underline{Tarski-Vaught Axioms}: Let $\{G_i : i<\delta\}$ be an increasing continuous chain in $K_1$ for $\delta$ a limit ordinal. Let $G_\delta = \bigcup_{i<\delta} G_i$. Observe that $G_\delta$ is a torsion-free abelian group and $G_i \leq_p G_\delta$ for all $i<\delta$. We check Conditions (2)(b) and (2)(c) of Definition \ref{Def:Kl(p_1,p_2)} to show that $G_i \leq_{\Kl} G_\delta$ and then use these to show that $G_\delta \in K_1$.

    We check (2)(b). Clearly $p_1^\omega G_i \subseteq p_1^\omega G_\delta$, so we show the other inclusion. Let $g\in p_1^\omega G_\delta$. Then $g\in G_j$ for some $j<\delta$ with $j>i$. Since $G_j \leq_p G_\delta$,  $g\in p_1^\omega G_j$. Since  $G_i \leq_{\Kl} G_j$, $p_1^\omega G_i= p_1^\omega G_j$. Hence $g\in p_1^\omega G_i$.
    
   We check (2)(c). If $G_i = p_1^\omega G_i$ there is nothing to show, so suppose $G_{i} \neq p_1^\omega G_{i}$. Then for every $j \geq i$, $p_2^ \omega G_i = p_2^\omega G_j$ because $G_i \leq_{\Kl} G_j$. Then the same argument as that above shows that $p_2^ \omega G_i  = p_2^ \omega G_\delta$.

   We show that $G_\delta \in K_1$. It follows from (2)(b) that $p_1^\omega G_\delta = p_1^\omega G_0$, so $|p_1^\omega G_\delta| = |p_1^\omega G_0| \leq \aleph_0$. To check the condition for $p_2$ we consider two cases. If $G_i = p_1^\omega G_i$ for all $i<\delta$, then $p_2^\omega G_\delta \subseteq  G_\delta = \bigcup_{i<\delta} G_i = \bigcup_{i<\delta} p_1^\omega G_i=p_1^\omega G_0$ where the last equality follows from $G_0 \leq_{\Kl} G_i$ for every $i < \delta$.\footnote{Observe that in this case $G_0 = G_i$ for all $i <\delta$ by Proposition \ref{Lem:Lemma1_Kl}. Thus $G_\delta = \bigcup_{i<\delta} G_i = G_0$.} If instead there is some minimal $i_0$ such that $G_{i_0} \neq p_1^\omega G_{i_0}$, then $p_2^\omega G_\delta = p_2^\omega G_{i_0}$ by (2)(c), so $|p_2^\omega G_\delta| = |p_2^\omega G_{i_0}| \leq \aleph_0$.

    \underline{Smoothness}: Further suppose there is some $H\in K_1$ such that $G_i \leq_{\Kl} H$ for all $i<\delta$. Clearly $G_\delta \leq_p H$. Moreover,  $p_1^\omega G_i = p_1^\omega H$ for all $i<\delta$, and hence $p_1^\omega G_\delta = p_1^\omega H$. So we are left to check Condition (2)(c).
    
    If $G_i = p_1^\omega G_i$ for all $i<\delta$, then we have shown $G_\delta = G_0 = p_1^\omega G_0 =p_1^\omega G_\delta$, so $G_\delta \leq_{\Kl} H$. If instead there is some minimal $i_0$ such that $G_{i_0} \neq p_1^\omega G_{i_0}$, then $p_2^\omega G_\delta = p_2^\omega G_{i_0} = p_2^\omega H$, so $G_\delta \leq_{\Kl} H$.

    \underline{L\"owenheim-Skolem-Tarski Axiom}: Given $G\in K_1$ and $A \subseteq G$, let $H$ be the pure closure of $A \cup p_1^\omega G \cup p_2^\omega G$ in $G$. Note that $\|H\| \leq |A \cup p_1^\omega G \cup p_2^\omega G| + \aleph_0 \leq |A| + \aleph_0$. One can check that $H$ is as needed as $p_1^\omega H=p_1^\omega G$ and $p_2^\omega H = p_2^\omega G$. 
\end{proof}

\begin{prop}\label{Prop:Kl_NMM_noJEP_noAP}
    $\Kl$ has no maximal models, but fails joint embedding and amalgamation.
\end{prop}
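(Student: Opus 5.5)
For no maximal models, take $G \in K_1$ and let $H = G \oplus \Z$. Then $G \leq_p H$, and $p^\omega H = p^\omega G \oplus p^\omega \Z = p^\omega G$ for $p \in \{p_1,p_2\}$, since $\Z$ has no nonzero $p$-divisible elements. Hence $H \in K_1$ and $H$ properly extends $G$. Condition (2)(b) holds, and condition (2)(c) holds via its second disjunct. So $G \leq_{\Kl} H$ with $|G| \subsetneq |H|$.

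For the failure of joint embedding, use groups with different countable $p_1$-divisible parts. The key point is that (2)(b) forces every $\Kl$-extension of $G$ to have $p_1^\omega$ equal to $p_1^\omega G$. Take $G_1 = 0$ and $G_2 = \Z[1/p_1]$, both in $K_1$. If $f_1: G_1 \to N$ and $f_2: G_2 \to N$ were $\Kl$-embeddings, then (2)(b) applied to $f_1[G_1]$ gives $p_1^\omega N = 0$. On the other hand, (2)(b) applied to $f_2[G_2]$ gives $p_1^\omega N = f_2[\Z[1/p_1]] \neq 0$, a contradiction.

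Amalgamation is expected to be the delicate part, since it must be refuted over a common base. I would use clause (2)(c) with base $M = \Z[1/p_1] = p_1^\omega M$; then (2)(c) imposes nothing on extensions of $M$, so their $p_2$-divisible parts may differ. Take $N_1 = M \oplus \Z$ and $N_2 = M \oplus \Z[1/p_2]$. Both are in $K_1$ with $p_1^\omega N_i = M$, so $M \leq_{\Kl} N_i$.

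Suppose $g_i: N_i \to N$ are $\Kl$-embeddings agreeing on $M$. Neither $N_i$ equals its own $p_1^\omega$-part, so (2)(c) forces $p_2^\omega g_i[N_i] = p_2^\omega N$. For $N_1$ this $p_2$-part is $0$, since $p_2^\omega M = 0$ (an element of $\Z[1/p_1]$ cannot be divisible by all powers of $p_2$). For $N_2$ it is $g_2[0 \oplus \Z[1/p_2]] \neq 0$. This gives $p_2^\omega N = 0$ and $p_2^\omega N \neq 0$ simultaneously, a contradiction. The only routine checks are the computations of $p^\omega$ for these direct sums, using that $p^\omega$ commutes with direct sums.
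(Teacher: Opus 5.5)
Your proof is correct and follows the paper's argument essentially verbatim. You use $G \oplus \Z$ for no maximal models, and the same triple $\Z[1/p_1] \leq_{\Kl} \Z[1/p_1]\oplus\Z,\ \Z[1/p_1]\oplus\Z[1/p_2]$, with clause (2)(c) forcing incompatible $p_2^\omega$-parts, to refute amalgamation. The only difference is cosmetic: for joint embedding you pair $\Z[1/p_1]$ with the trivial group, where the paper pairs it with $\Z$; both work because each has trivial $p_1^\omega$-part, so clause (2)(b) gives incompatible values of $p_1^\omega$ for any common extension.
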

\begin{proof}
    To see $\Kl$ has no maximal models, let $G\in K_1$. Consider $G\oplus \Z$, and note that $G\oplus \Z \in K_1$ and $G \leq_{\Kl} G\oplus \Z$.
    
    To see $\Kl$ does not have joint embedding, consider $G_1 = \Z[1/p_1]$ and $G_2 = \Z$ and observe that $p_1^\omega G_1 = G_1 = \Z[1/p_1]\not\cong 0 = p_1^\omega G_2$.

    To see $\Kl$ does not have amalgamation, let $G_0 = \Z[1/p_1]$, $G_1 = \Z[1/p_1] \oplus \Z$, and $G_2 = \Z[1/p_1] \oplus \Z[1/p_2]$. Observe $G_0, G_1, G_2 \in K_1$ and $G_0 \leq_{\Kl} G_1, G_2$. Assume for the sake of a contradiction that there are $H\in K_1$ and $\Kl$-embeddings $f_\ell:G_\ell \to H$ for $\ell\in\{1,2\}$ such that $f_1\restr G_0 = f_2 \restr G_0$. Since $G_\ell \neq p_1^\omega G_\ell$ and $f_\ell$ is a $\Kl$-embedding for $\ell\in\{1,2\}$, we have that  $0 = p_2^\omega G_1 \cong p_2^\omega H \cong p_2^\omega G_2 = \Z[1/p_2]$, but this is a contradiction.
\end{proof}

\begin{prop}\label{Prop:Kl_admit_int}
    $\Kl$ admits intersections.
\end{prop}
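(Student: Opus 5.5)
Fix $N \in K_1$ and $A \subseteq N$, and let $\mathcal{M} = \{M \in K_1 : A \subseteq M \leq_{\Kl} N\}$ and $M^* := \bigcap \mathcal{M}$. This family is nonempty because $N \in \mathcal{M}$. We must show that $M^* \in K_1$ and $M^* \leq_{\Kl} N$. We rely on Fact~\ref{Thm:Properties_TF}(2), that $\K_{TF}$ admits intersections. Since every $M \in \mathcal{M}$ is a pure subgroup of $N$, the intersection $M^*$ is a torsion-free pure subgroup of $N$ containing $A$.

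The first step is the $p_1$-condition. Every $M \in \mathcal{M}$ satisfies $p_1^\omega M = p_1^\omega N$. Fact~\ref{Facts:Uncited}(3) then gives $p_1^\omega M^* = \bigcap_{M\in\mathcal{M}} p_1^\omega M = p_1^\omega N$. In particular $|p_1^\omega M^*| \leq \aleph_0$, and conditions (2)(a),(b) of Definition~\ref{Def:Kl(p_1,p_2)} hold for $M^*$ and $N$.

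The second step handles the $p_2$-condition by splitting $\mathcal{M}$. Let $\mathcal{M}_1 = \{M \in \mathcal{M} : M = p_1^\omega M\}$ and $\mathcal{M}_2 = \mathcal{M}\setminus \mathcal{M}_1$.
\begin{itemize}
\item[(i)] If $\mathcal{M}_1 \neq \emptyset$, pick $M_0 \in \mathcal{M}_1$. Then $M_0 = p_1^\omega M_0 = p_1^\omega N \subseteq M$ for every $M \in \mathcal{M}$, so $M^* = M_0$. Hence $M^* \in K_1$ and $M^* \leq_{\Kl} N$.
\item[(ii)] If $\mathcal{M}_1 = \emptyset$, every $M \in \mathcal{M}$ satisfies $p_2^\omega M = p_2^\omega N$. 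Fact~\ref{Facts:Uncited}(3) applied with $p_2$ gives $p_2^\omega M^* = p_2^\omega N$, which is countable. So $M^* \in K_1$, and condition (2)(c) holds via its second disjunct.
\end{itemize}
Either way $M^* \in K_1$ and $M^* \leq_{\Kl} N$.

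The main thing to check is that Fact~\ref{Facts:Uncited}(3) applies to arbitrarily many pure subgroups, including a proper class indexing. This is harmless, since all members of $\mathcal{M}$ are subsets of $N$, so $\mathcal{M}$ is a set and can be indexed by an ordinal. Beyond that the argument is routine, and the case split in the second step is what absorbs the disjunction in condition (2)(c).
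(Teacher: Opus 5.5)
Your proof is correct and is essentially the paper's argument. Both proofs get the $p_1$-condition from Fact~\ref{Facts:Uncited}(3) and then handle the disjunction in Condition (2)(c) by a case split. The only cosmetic difference is the choice of split: you split on whether some member satisfies $M = p_1^\omega M$ (and note that the intersection then collapses to that member), while the paper splits on whether every member has $p_2^\omega H_i = p_2^\omega G$ and, in the remaining case, shows $\bigcap_i H_i = \bigcap_i p_1^\omega H_i$.
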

\begin{proof}
    It suffices to prove that for any $\{H_i : i < \lambda\}$ with $H_i \leq_{\Kl} G$ for all $i < \lambda$, we have that $\bigcap_{i < \lambda} H_i \in K_1$ and $\bigcap_{i < \lambda} H_i \leq_{\Kl} G$. We will often use that $p_\ell^\omega (\bigcap_{i < \lambda} H_i) = \bigcap_{i < \lambda} p_\ell^\omega H_i$ for $\ell\in\{1,2\}$, which follows from Fact \ref{Facts:Uncited}.
    
    Note that $\bigcap_{i < \lambda} H_i$ is a torsion-free abelian group with $p_\ell^\omega (\bigcap_{i < \lambda} H_i) = \bigcap_{i < \lambda} p_\ell^\omega H_i \subseteq p_\ell^\omega G$ for $\ell\in\{1,2\}$, so $\bigcap_{i < \lambda} H_i \in K_1$ as $G\in K_1$. Moreover, observe that $\bigcap_{i < \lambda} H_i \leq_p G$ and $p_1^\omega(\bigcap_{i < \lambda}  H_i) = p_1^\omega G$. So we are left to check Condition (2)(c). We consider two cases.

    If $p_2^\omega H_i = p_2^\omega G$ for all $i < \lambda$, then $p_2^\omega \left(\bigcap_{i<\lambda} H_i\right) = p_2^\omega G$. If instead, there is some $i_0<\lambda$ such that $p_2^\omega H_{i_0} \neq p_2^\omega G$, we must then have $H_{i_0} = p_1^\omega H_{i_0}$. It is enough to show that $\bigcap_{i<\lambda} H_i = \bigcap_{i<\lambda} p_1^\omega  H_i$.
    
    Let $h\in \bigcap_{i<\lambda} H_i$ and fix some $i < \lambda$. Then $h \in H_{i_0} = p_1^\omega H_{i_0} \subseteq p_1^\omega G$. Since $h \in H_i \cap p_1^\omega G$ and $H_i \leq_p G$, $h \in p_1^\omega H_{i}$. \end{proof}

  We are actually able to get a very simple description of the closure operator in $\Kl$.   
    
\begin{lemma}\label{Lem:Kl_closure_equal_to_TF}
    Let $G \leq_{\Kl} H$ and $a\in H \backslash G$.
    \begin{enumerate}
        \item If $G= p_1^\omega G$, then $\Cl_{\Kl}^H(\{a\} \cup G) = \Cl_{\K_{TF}}^H(\{a\} \cup G \cup p_2^\omega H)$.
        \item  If $G \neq p_1^\omega G$,  then $\Cl_{\Kl}^H(\{a\} \cup G) = \Cl_{\K_{TF}}^H(\{a\} \cup G)$.
    \end{enumerate}
\end{lemma}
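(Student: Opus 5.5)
The plan is to prove each equality by two inclusions, writing $H' := \Cl_{\K_{TF}}^H(B)$ for the relevant set $B$, which is the pure closure of $B$ in $H$. For ``$\subseteq$'' we show $H' \in K_1$ and $H' \leq_{\Kl} H$; since $H'$ contains $\{a\}\cup G$, minimality of $\Cl_{\Kl}^H(\{a\}\cup G)$ (which exists by Proposition \ref{Prop:Kl_admit_int}) gives the inclusion. For ``$\supseteq$'' we use that $M := \Cl_{\Kl}^H(\{a\}\cup G)$ satisfies $M \leq_{\Kl} H$, so in particular $M \leq_p H$. Hence $M$ is a pure subgroup containing $\{a\}\cup G$, and it remains only to check that $M$ contains $B$.

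\textbf{Inclusion $\supseteq$.} In case (2) this is immediate. In case (1), suppose first that $M = p_1^\omega M$. Then $M = p_1^\omega M = p_1^\omega H = p_1^\omega G = G$ by Condition (2)(b), using $G \leq_{\Kl} H$. This contradicts $a \in M\setminus G$. So $M \neq p_1^\omega M$, and Condition (2)(c) forces $p_2^\omega M = p_2^\omega H$, which gives $p_2^\omega H \subseteq M$.

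\textbf{Inclusion $\subseteq$.} Here we verify $H' \leq_{\Kl} H$. Purity holds by construction, and $H' \in K_1$ holds because $p_\ell^\omega H' \subseteq p_\ell^\omega H$ by Fact \ref{Facts:Uncited}(1). For (2)(b), note that $p_1^\omega G \subseteq p_1^\omega H' \subseteq p_1^\omega H = p_1^\omega G$, so equality holds throughout. For (2)(c), we need $p_2^\omega H' = p_2^\omega H$ unless $H' = p_1^\omega H'$. By the same computation as above, $H' = p_1^\omega H'$ would force $H' = G$, impossible since $a \in H'$; so in both cases we must prove $p_2^\omega H' = p_2^\omega H$. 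In case (1) this holds because $p_2^\omega H \subseteq H'$ by construction, and Fact \ref{Facts:Uncited}(1) together with purity gives $p_2^\omega H' = p_2^\omega H\cap H' = p_2^\omega H$. In case (2) we have $G \neq p_1^\omega G$, so $G \leq_{\Kl} H$ yields $p_2^\omega G = p_2^\omega H$. Then $p_2^\omega H = p_2^\omega G \subseteq p_2^\omega H' \subseteq p_2^\omega H$.

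\textbf{Main obstacle.} The only delicate point is Condition (2)(c), which is disjunctive. It is handled by the observation that any $\Kl$-substructure $X$ of $H$ with $G\subseteq X$ and $X = p_1^\omega X$ must equal $G$, and this is excluded by $a\notin G$. That observation is used both for $M$ in the inclusion $\supseteq$ and for $H'$ in the inclusion $\subseteq$.
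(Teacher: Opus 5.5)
Your proposal is correct and follows essentially the same route as the paper. Both arguments get one inclusion from minimality of $\Cl_{\Kl}^H(\{a\}\cup G)$ and the other from purity of $\Kl$-substructures. In both, the key step in case (1) is that $a\notin G$ rules out $\Cl_{\Kl}^H(\{a\}\cup G)=p_1^\omega \Cl_{\Kl}^H(\{a\}\cup G)$, so Condition (2)(c) puts $p_2^\omega H$ inside the closure. The only difference is that you spell out why the pure closure satisfies $\leq_{\Kl} H$, which the paper leaves as ``observe''.
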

\begin{proof}
    Observe first that for every $A \subseteq H$, $\Cl_{\Kl}^H(A) = \bigcap\{G_i\in K_1 : A \subseteq G_i, G_i \leq_{\Kl} H\} \supseteq \bigcap\{G_i\in TF : A \subseteq G_i, G_i \leq_{p} H\} = \Cl_{\K_{TF}}^H(A)$.
    
    \begin{enumerate}
        \item Since $a \not\in G$, we have that $p_1^\omega \Cl_{\Kl}^H(\{a\}\cup G) \neq \Cl_{\Kl}^H(\{a\}\cup G)$ as otherwise $a \in \Cl_{\Kl}^H(\{a\}\cup G) = p_1^\omega \Cl_{\Kl}^H(\{a\}\cup G) = p_1^\omega H = p_1^\omega G = G$ where the second and third equality follow from the definition of $\leq_{\Kl}$. Hence   $p_2^\omega \Cl_{\Kl}^H(\{a\}\cup G) = p_2^\omega H$ by Condition (2)(c). Thus $\{a\} \cup G \cup p_2^\omega H \subseteq \Cl_{\Kl}^H(\{a\}\cup G)$. So we have $\Cl_{\Kl}^H(\{a\}\cup G) = \Cl_{\Kl}^H(\{a\}\cup G \cup p_2^\omega H) \supseteq \Cl_{\K_{TF}}^H(\{a\}\cup G \cup p_2^\omega H)$.
    
        On the other hand, observe that $\Cl_{\K_{TF}}^H(\{a\}\cup G \cup p_2^\omega H) \leq_{\Kl} H$ and $\{a\}\cup G \subseteq \Cl_{\K_{TF}}^H(\{a\}\cup G \cup p_2^\omega H)$. Then by the minimality of the $\Kl$ closure, $\Cl_{\Kl}^H(\{a\}\cup G) \subseteq \Cl_{\K_{TF}}^H(\{a\}\cup G \cup p_2^\omega H)$, and  $\Cl_{\Kl}^H(\{a\}\cup G) = \Cl_{\K_{TF}}^H(\{a\}\cup G \cup p_2^\omega H)$ follows.
     
        \item Since $G \neq p_1^\omega G$, then $p_2^\omega G = p_2^\omega H$ by Condition (2)(c). So $\Cl_{\K_{TF}}^H(\{a\} \cup G) \leq_{\Kl} H$. Then by the minimality of the $\Kl$ closure, $\Cl_{\Kl}^H(\{a\}\cup G) \subseteq \Cl_{\K_{TF}}^H(\{a\}\cup G)$. The other inclusion follows from the observation made at the top of the proof. Hence $\Cl_{\Kl}^H(\{a\} \cup G) = \Cl_{\K_{TF}}^H(\{a\} \cup G)$. \qedhere
    \end{enumerate}   
\end{proof}

 The following groups will be used to witness the failure of $(<\aleph_0)$-tameness.

\begin{defin} \label{def:M,N}
    Let $U := \prod_{\alpha < \omega} \Q e_\alpha$ and let $M := \bigoplus_{\alpha < \omega} \Z[1/p_1, 1/p_2] e_\alpha \leq U$. Occasionally, we may use the notation $(a_0,a_1,a_2,\ldots)$ to refer to the element $a = \sum_{\alpha < \omega} a_\alpha e_\alpha\in U$ with $a_\alpha \in \Q$.

    Let $q_1, q_2 \not\in \{p_1, p_2\}$ be distinct primes. For all $\alpha < \omega$, pick $c_\alpha, d_\alpha \in \Z$ such that $c_\alpha q_1^\alpha + d_\alpha q_2^\alpha = 1$.

    Let 
    \[ y_\alpha := \sum_{k \geq \alpha} c_k q_1^{k-\alpha}e_{k} = (0, \ldots, 0, c_\alpha q_1^0, c_{\alpha+1} q_1^1, c_{\alpha+2} q_1^2, \ldots )  \]
    and 
    \[ z_\alpha := \sum_{k \geq \alpha} d_k q_2^{k-\alpha}e_{k} = (0, \ldots, 0, d_\alpha q_2^0, d_{\alpha+1} q_2^1, d_{\alpha+2} q_2^2, \ldots ) \]

    Let $N := \left\langle M, \Z[1/p_2] y_\alpha, \Z[1/p_2] z_\alpha : \alpha < \omega \right\rangle \leq U$ in $TF$. 
\end{defin}

\begin{remark}\label{Remark:easy}
    Let $M$ and $N$ be as in Definition \ref{def:M,N}.
    \begin{enumerate}
        \item  $q_1^\omega N = q_2^\omega N = 0$.
        \item  $y_0+z_0=(c_0 q_1^0 + d_0 q_2^0,c_1 q_1^1 + d_1 q_2^1,c_2 q_1^2 + d_2 q_2^2,\ldots)=(1,1,1,\ldots) \in N$.
        \item If $n \in N$, then there are $m \in M$, $\beta < \omega$ and $r,s \in \Z[1/p_2]$ such that $n = m + r y_\beta + s z_\beta$. This follows from the fact that if $\beta = \alpha + k$ for some $k < \omega$, then $y_\alpha = c_\alpha e_\alpha + c_{\alpha+1} q_1 e_{\alpha +1} + \ldots + c_{\alpha+k-1} q_1^{k-1} e_{\alpha+k-1}  + q_1^k y_\beta$ and  $c_\alpha  e_\alpha + c_{\alpha+1} q_1 e_{\alpha +1}  + \ldots + c_{\alpha+k-1} q_1^{k-1} e_{\alpha+k-1} \in M$. A similar argument can be made for the $z_\alpha$'s.  
    \end{enumerate}
\end{remark}

\begin{prop}\label{Prop:div-MN}
    Let $M, N$ be as in Definition \ref{def:M,N}.
    \begin{enumerate}
        
        \item If $r,s \in \Z[1/p_2]$ and $r y_\alpha + s z_\alpha \in p_1^\omega N$ for $\alpha <\omega$, then $r=s=0$.
        \item $p_1^\omega M = M$,  $p_2^\omega N = N$, and $p_1^\omega N = M$.
        \item $M, N \in K_1$ and $M \leq_{\Kl} N$.
    \end{enumerate}
\end{prop}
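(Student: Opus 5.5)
For (1), I would work coordinatewise in $U$. Suppose $r,s \in \Z[1/p_2]$ and $x := r y_\alpha + s z_\alpha \in p_1^\omega N$. Write $r = r'/p_2^{a}$ and $s = s'/p_2^{a}$ with $r',s'\in\Z$, and replace $x$ by $p_2^a x$; this is still in $p_1^\omega N$ because $p_1^\omega N$ is a subgroup. So we may assume $r,s\in\Z$. The first step is to find a bounded-denominator description of $N$. Every element of $N$ has the form $m + r'' y_\beta + s'' z_\beta$ by Remark~\ref{Remark:easy}(3). Hence the coordinates of any $n\in N$ lie in $\Z[1/p_1,1/p_2]$. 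Moreover, off a finite set of coordinates they equal $r'' c_k q_1^{k-\beta} + s'' d_k q_2^{k-\beta}$ with $r'',s''\in\Z[1/p_2]$, so their $p_1$-adic valuation is bounded below uniformly in $k$.

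The key step is to use $p_1$-divisibility. Suppose $x = p_1^{j} n_j$ with $n_j\in N$ for every $j$. Then for all large $k$,
\[ r c_k q_1^{k-\alpha} + s d_k q_2^{k-\alpha} = p_1^{j}\bigl(r_j c_k q_1^{k-\beta_j} + s_j d_k q_2^{k-\beta_j}\bigr), \]
where $r_j,s_j\in\Z[1/p_2]$. I would then exploit the independence coming from $q_1,q_2$. Reduce modulo a large power of $q_2$ at coordinates $k$ with $k-\alpha$ large. Since $c_k q_1^k \equiv 1 \pmod{q_2^k}$, the $y$-component is recoverable modulo $q_2^{k}$. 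This gives a congruence $r q_1^{-\alpha} \equiv p_1^{j} r_j q_1^{-\beta_j}$ modulo arbitrarily high powers of $q_2$, computed in $\Z_{(q_2)}$. Since both sides are fixed elements of $\Z[1/p_1,1/p_2,1/q_1]$, they are equal, so $r = p_1^j r_j q_1^{\alpha-\beta_j}$. Because $r_j\in\Z[1/p_2]$, this forces $p_1^j \mid r$ in $\Z[1/p_2,1/q_1]$ for every $j$, and therefore $r=0$. The symmetric argument modulo powers of $q_1$ gives $s=0$. The main obstacle is handling the finitely many "$M$-part" coordinates and making the congruence argument rigorous; this is where I expect the care to be needed.

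For (2), $p_1^\omega M = M$ is immediate, since $M$ is a sum of copies of $\Z[1/p_1,1/p_2]$. Likewise $p_2^\omega N = N$, because all generators of $N$ are $p_2$-divisible in $N$ by construction. For $p_1^\omega N = M$, the inclusion $\supseteq$ is clear. For $\subseteq$, take $n = m + r y_\beta + s z_\beta \in p_1^\omega N$. Then $n - m \in p_1^\omega N$, because $m\in M\subseteq p_1^\omega N$. Applying (1) gives $r=s=0$, so $n=m\in M$.

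For (3), note that $M$ and $N$ are countable, so both lie in $K_1$. To show $M\le_{\Kl} N$, I would first check purity. By the torsion-free remark in the preliminaries, it suffices to check $pM = M\cap pN$ for each prime $p$. For $p\in\{p_1,p_2\}$ this is trivial, since $M$ is $p$-divisible. For other primes $p$, suppose $pn\in M$. Then $p(r y_\beta + s z_\beta)\in M$, which forces $pr\,y_\beta + ps\,z_\beta$ to have finite support. A coordinatewise argument similar to (1) then gives $r=s=0$, so $n\in M$. Finally, condition (2)(b) of Definition~\ref{Def:Kl(p_1,p_2)} holds by (2), and condition (2)(c) holds since $M = p_1^\omega M$.
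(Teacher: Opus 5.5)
Your proof is correct, but for (1) it takes a different route from the paper.

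\textbf{The paper's route.} It first aligns indices. With $\beta=\alpha+k$ the index appearing in $h_n$, it rewrites $y_\alpha=m+q_1^k y_\beta$ and $z_\alpha=m'+q_2^k z_\beta$. This gives $(p_1^n r_n-rq_1^k)y_\beta+(p_1^n s_n-sq_2^k)z_\beta\in M$, so the coordinates vanish for large $i$. A pure divisibility argument in $\Z[1/p_2]$ then applies: $q_1^i$ divides the $z$-coefficient for all large $i$, since $\gcd(q_1,d_{\beta+i}q_2^i)=1$. Hence that coefficient is $0$, i.e.\ $p_1^n s_n=sq_2^k$, and so $p_1^n\mid s$.

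\textbf{Your route.} You never align $\alpha$ with $\beta_j$. You compare coordinates $k$ beyond the support of $m_j$ and use $c_kq_1^k\equiv 1 \pmod{q_2^k}$. This gives $rq_1^{-\alpha}=p_1^j r_j q_1^{-\beta_j}$ in $\mathbb{Z}_{(q_2)}$, because $\bigcap_k q_2^k\mathbb{Z}_{(q_2)}=0$.

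In effect you are using the homomorphism $x\mapsto\lim_k x_k$ (taken $q_2$-adically). It kills $M$ and every $z_\beta$, sends $y_\beta$ to $q_1^{-\beta}$, and has image $\Z[1/p_2,1/q_1]$, which has no nonzero $p_1$-divisible elements. This is the more conceptual formulation.

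The obstacle you flag is already dealt with. Restricting to large $k$ removes the $M$-part, and the congruence argument is rigorous exactly as you stated it. The paper's version, by contrast, stays entirely elementary (divisibility in $\Z[1/p_2]$), at the cost of the index bookkeeping from Remark~\ref{Remark:easy}(3).

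\textbf{Part (3).} The paper gets $M\leq_p N$ immediately from $M=p_1^\omega N$ and Fact~\ref{Facts:Uncited}(2). Your prime-by-prime check is also valid. Its nontrivial case is just (1) again: since $M\subseteq p_1^\omega N$, the condition $pry_\beta+psz_\beta\in M$ already forces $pr=ps=0$ by (1).
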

\begin{proof}\
    \begin{enumerate}
        \item Suppose $r y_\alpha + s z_\alpha \in p_1^\omega N$. Then for all $n<\omega$, $r y_\alpha + s z_\alpha = p_1^n h_n$ for $h_n\in N$.
        
        For the following argument, fix any $n<\omega$. By Remark \ref{Remark:easy} we may assume that $h_n = m_n + r_n y_\beta + s_n z_\beta$ for some $\alpha \le \beta < \omega$, $m_n \in M$, and $r_n, s_n \in \Z[1/p_2]$.

        Let $\beta = \alpha + k$ for some $k < \omega$. Then by Remark \ref{Remark:easy}, $y_\alpha = m_{y,\alpha,k} + q_1^k y_\beta$ and $z_\alpha = m_{z,\alpha,k} + q_2^k z_\beta$ for $m_{y,\alpha,k}, m_{z,\alpha,k} \in M$. Thus we obtain the equation
        \[r(m_{y,\alpha,k} + q_1^k y_\beta) + s(m_{z,\alpha,k} + q_2^k z_\beta) = p_1^n m_n + p_1^n r_n y_\beta + p_1^n s_n z_\beta. \]    
        So we have
        \[r m_{y,\alpha,k} + s m_{z,\alpha,k} - p_1^n m_n = (p_1^n r_n - r q_1^k)y_\beta + (p_1^n s_n - s q_2^k)z_\beta \in M.\]

        Since $(p_1^n r_n - r q_1^k)y_\beta + (p_1^n s_n - s q_2^k)z_\beta \in M$, there exists some $j<\omega$ such that for all $j\le i<\omega$, the $e_{\beta + i}$-coordinate of this expression is $0$. Thus $(p_1^n r_n - r q_1^k)c_{\beta+i} q_1^i + (p_1^n s_n - s q_2^k)d_{\beta+i} q_2^i = 0$ for all $i\ge j$ with $r, r_n, s, s_n \in \Z[1/p_2]$.

        But then $q_1^i | (p_1^n s_n - s q_2^k) d_{\beta+i} q_2^i$ in $\Z[1/p_2]$ for all $i\ge j$. Since $\gcd(q_1, q_2) = \gcd(q_1, d_{\beta+i})=1$, $q_1^i | (p_1^n s_n - s q_2^k)$ in $\Z[1/p_2]$. But this is true for arbitrarily large $i$, so $(p_1^n s_n - s q_2^k) \in q_1^\omega \Z[1/p_2] =0$. Thus $p_1^n s_n = s q_2^k$, and since $\gcd(p_1, q_2)=1$, $p_1^n | s$ in $\Z[1/p_2]$.
        
        Note that $p_1^n | s$ holds independently of our choice of $n<\omega$, so $s \in p_1^\omega \Z[1/p_2] = 0$, and $s=0$ follows. The argument for $r=0$ is similar.
    
        \item The first two equalities follow directly from the definition of $M$ and $N$, so we show only the last equality. Observe  $M = p_1^\omega M \subseteq p_1^\omega N$, so we show the other inclusion.
    
        Let $n\in p_1^\omega N$. Since $n \in N$, there exist $m\in M$, $r,s \in \Z[1/p_2]$, and $\alpha < \omega$ such that $n = m + r y_\alpha + s z_\alpha$ by Remark \ref{Remark:easy}. Since $M = p_1^\omega M \subseteq p_1^\omega N$, we have $n-m = r y_\alpha + s z_\alpha \in p_1^\omega N$. By (1), this implies that $n-m=0$.  Thus $n=m \in M$. 
     
        \item  Since $M, N$ are both countable, we have that $M, N \in K_1$. Furthermore, $M \leq_p N$ as $M = p_1^\omega N$, and Conditions (2)(b) and (2)(c) of Definition \ref{Def:Kl(p_1,p_2)} follow from (2). \qedhere
    \end{enumerate}  
\end{proof}

The reason tameness fails in our example is due to the following algebraic result.

\begin{theorem}\label{Prop:Homo_Unique}     Let $M, N$ be as in Definition \ref{def:M,N}.
    If $f:N \to N$ is a group homomorphism with $f(e_\alpha)=e_\alpha$ for all $\alpha<\omega$, then $f=\id_N$.
\end{theorem}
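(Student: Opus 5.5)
The plan is to reduce the statement to the vanishing of a homomorphism on a few generators. Write $g := f - \id_N$, which is a group homomorphism $N \to N$ with $g(e_\alpha) = 0$ for all $\alpha<\omega$. It suffices to show that $g$ vanishes on $M$ and on each $y_\alpha$ and $z_\alpha$. Since $N$ is generated by $M$ together with the $\Z[1/p_2]$-multiples of the $y_\alpha, z_\alpha$, torsion-freeness then finishes the argument. Concretely, if $x \in N$ and $kx = w$ with $0 \neq k \in \Z$ and $g(w)=0$, then $k\, g(x) = g(w) = 0$ in the torsion-free group $N$, so $g(x)=0$.

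First I show $g\restr M = 0$. An element of $M$ is a finite sum of terms $a e_\alpha$ with $a \in \Z[1/p_1,1/p_2]$. Writing $a = b/(p_1^n p_2^m)$ with $b\in\Z$, we get $p_1^n p_2^m\, g(a e_\alpha) = b\, g(e_\alpha) = 0$, so $g(a e_\alpha) = 0$ by torsion-freeness. The same argument applies to elements $r y_\alpha$ and $s z_\alpha$ with $r,s\in \Z[1/p_2]$: once $g(y_\alpha)=g(z_\alpha)=0$ is known, $g(r y_\alpha)=g(s z_\alpha)=0$ follows.

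The key step is showing $g(y_\alpha) = 0$ and $g(z_\alpha)=0$. By Remark \ref{Remark:easy}(3), for every $k<\omega$ we have $y_\alpha = m_{y,\alpha,k} + q_1^k y_{\alpha+k}$ with $m_{y,\alpha,k} \in M$. Applying $g$ and using $g\restr M = 0$ gives
\[ g(y_\alpha) = q_1^k\, g(y_{\alpha+k}), \]
so $g(y_\alpha)$ is divisible by $q_1^k$ in $N$ for every $k$, i.e.\ $g(y_\alpha) \in q_1^\omega N$. By Remark \ref{Remark:easy}(1), $q_1^\omega N = 0$, so $g(y_\alpha)=0$. Symmetrically, $z_\alpha = m_{z,\alpha,k} + q_2^k z_{\alpha+k}$ gives $g(z_\alpha) \in q_2^\omega N = 0$.

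Combining these steps, $g$ vanishes on a generating set of $N$, hence $g=0$ and $f=\id_N$. I expect no real obstacle here. The only substantive input is $q_1^\omega N = q_2^\omega N = 0$ (Remark \ref{Remark:easy}(1)), which the paper states. If it needed justification, I would argue coordinatewise in $U$ using Remark \ref{Remark:easy}(3), as in the proof of Proposition \ref{Prop:div-MN}(1): an element $m + r y_\beta + s z_\beta$ that is divisible by arbitrarily high powers of $q_1$ forces the coefficients to be eventually divisible by all powers of $q_1$ in $\Z[1/p_2]$, hence to vanish.
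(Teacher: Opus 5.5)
Your proof is correct and follows the paper's argument: pass to $g = f - \id_N$, note $g\restr M = 0$, and use $y_\alpha = m + q_1^k y_{\alpha+k}$ with $m \in M$ to get $g(y_\alpha) \in q_1^\omega N = 0$, and symmetrically for $z_\alpha$. Your explicit torsion-freeness bookkeeping for $M$ and for the $\Z[1/p_2]$-multiples fills in what the paper calls clear. Also, $q_1^\omega N = 0$ needs no appeal to Remark~\ref{Remark:easy}(3), because every coordinate of an element of $N$ lies in $\Z[1/p_1,1/p_2]$, and that ring has no nonzero element divisible by all powers of $q_1$.
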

\begin{proof}
    Consider $g:= f - \id_N$. We show that $g=0$. Note that $g\restr M = 0$ is clear. Thus it suffices to prove that $g(y_\alpha) = g(z_\alpha) = 0$ for all $\alpha < \omega$.
    
    Let $\alpha <\omega$. We show that $g(y_\alpha)=0$; the proof that  $g(z_\alpha) = 0$ is similar. Recall that   $y_\alpha = (0, \ldots, 0, c_\alpha q_1^0, c_{\alpha+1} q_1^1, c_{\alpha+2} q_1^2, \ldots )= \sum_{k \geq \alpha} c_k q_1^{k-\alpha}e_{k}$.

Then for every $n < \omega$, we have that
\[ y_\alpha - c_\alpha e_\alpha - c_{\alpha+1}q_1e_{\alpha +1} - \ldots - c_{\alpha+ n - 1}q_1^{n-1}e_{\alpha+n - 1} = q_1^n\big( \sum_{k \geq \alpha + n} c_k q_1^{k-\alpha -n}e_{k}\big).\]

Applying $g$ and using that  $g\restr M = 0$, we get that
\[ g(y_\alpha)= q_1^n \Big( g\big(\sum_{k \geq \alpha + n} c_k q_1^{k-\alpha -n}e_{k}\big)\Big)\]
for every $n <\omega$. Thus we have that $q_1^n | g(y_\alpha)$ for every $n<\omega$, so $g(y_\alpha) \in q_1^\omega N = 0$. Hence $g(y_\alpha) = 0$. \end{proof}

We can now prove  one of the key results of the paper. 

\begin{theorem}\label{Thm:Kl_not_aleph0_tame}
    $\Kl$ is not $(<\aleph_0)$-tame.
\end{theorem}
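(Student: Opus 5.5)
The plan is to exhibit a countable model, namely $M$ from Definition~\ref{def:M,N}, and two Galois types over $M$ that are distinct but agree on every finite $A\subseteq M$. Both types will be realized in $N$, and distinctness will come from Theorem~\ref{Prop:Homo_Unique}. I use Fact~\ref{Prop:Galois_type_Vasey} throughout, which applies because $\Kl$ admits intersections (Proposition~\ref{Prop:Kl_admit_int}), and $M\leq_{\Kl} N$ holds by Proposition~\ref{Prop:div-MN}.

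\emph{Step 1: closures are all of $N$.} Let $a\in N\setminus M$. Since $M=p_1^\omega M$, Lemma~\ref{Lem:Kl_closure_equal_to_TF}(1) gives $\Cl_{\Kl}^N(\{a\}\cup M)=\Cl_{\K_{TF}}^N(\{a\}\cup M\cup p_2^\omega N)=N$, because $p_2^\omega N=N$. The same holds for the closure of $\{a\}\cup A$ with $A\subseteq M$ finite. To see this, let $C\leq_{\Kl} N$ contain $\{a\}\cup A$. Condition (2)(b) forces $p_1^\omega C=p_1^\omega N=M\subseteq C$. Since $a\notin M$, we have $C\neq p_1^\omega C$, so Condition (2)(c) forces $p_2^\omega C=N$, and hence $C=N$.

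\emph{Step 2: distinctness over $M$.} Take $a\neq a'$ in $N\setminus M$, both outside $M$. If $\gtp(a/M;N)=\gtp(a'/M;N)$, then Fact~\ref{Prop:Galois_type_Vasey} and Step 1 give an automorphism $f$ of $N$ fixing $M$ pointwise with $f(a)=a'$. In particular $f(e_\alpha)=e_\alpha$ for every $\alpha<\omega$, so $f=\id_N$ by Theorem~\ref{Prop:Homo_Unique}. This gives $a=a'$, a contradiction. So any two distinct elements of $N\setminus M$ realize distinct types over $M$.

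\emph{Step 3: agreement over finite sets (the main obstacle).} By Step 1 and Fact~\ref{Prop:Galois_type_Vasey}, to get $\gtp(a/A;N)=\gtp(a'/A;N)$ it suffices to find an automorphism $\sigma$ of $N$ fixing $A$ with $\sigma(a)=a'$. A finite $A$ lies in $\bigoplus_{\alpha<k}\Z[1/p_1,1/p_2]e_\alpha$ for some $k$, so I only need automorphisms of $N$ fixing $e_0,\dots,e_{k-1}$. The first family to use is the tail negations $\tau_k$ on $U$, which negate every coordinate $\geq k$. Write $y_\alpha=h+q_1^{k-\alpha}y_k$ with $h\in M$ (Remark~\ref{Remark:easy}(3)). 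Then $\tau_k(y_\alpha)=h-q_1^{k-\alpha}y_k\in N$, the analogous formula holds for $z_\alpha$, and $\tau_k[M]=M$. So $\tau_k$ is an automorphism of $N$ fixing $e_0,\dots,e_{k-1}$. The second family consists of transvections $x\mapsto x+\lambda(x)e_j$, where $\lambda\colon N\to\Z[1/p_1,1/p_2]$ is a homomorphism with $\lambda(e_\alpha)=0$ for $\alpha<k$ and $\lambda(e_j)=0$. The difficulty is choosing a single pair $a\neq a'$ that is linked by such an automorphism for \emph{every} $k$, since $\tau_k(a)$ depends on $k$. My first attempt is $a=y_0+z_0=(1,1,1,\dots)$ (Remark~\ref{Remark:easy}(2)) and $a'=-a$. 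For this pair I would try to compose $\tau_k$ with transvections that correct the finite head: $\tau_k(a)=-a+2\sum_{\alpha<k}e_\alpha$, so the correction needs a $\lambda$ as above with $\lambda(a)$ suitably nonzero. Such a $\lambda$ might be obtained by defining it on the generators $e_\alpha,y_\alpha,z_\alpha$ compatibly with the relations in Remark~\ref{Remark:easy}(3).

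If no such $\lambda$ exists, because the relations force $\lambda$ to vanish on $a$ modulo $M$ just as in Theorem~\ref{Prop:Homo_Unique}, I would instead put $a'$ in a second copy $N'\cong N$ with $M\leq_{\Kl}N'$. The isomorphism $N\cong N'$ would be chosen to fix the first $k$ coordinates but not all of $M$, so that $a$ and $a'$ agree over every finite $A$. Theorem~\ref{Prop:Homo_Unique}, applied through an isomorphism $N'\to N$ over $M$, would still rule out agreement over all of $M$.
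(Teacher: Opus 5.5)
Your Steps 1 and 2 are correct and are essentially the paper's Claim 1. Step 2 even gives the slightly stronger fact that any two distinct elements of $N\setminus M$ realize distinct types over $M$.

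\textbf{The gap is Step 3,} which is the actual content of the non-tameness. You never exhibit a fixed pair $a\neq a'$ together with, for every $k$, an automorphism of $N$ fixing $\bigoplus_{\alpha<k}\Z[1/p_1,1/p_2]e_\alpha$ and sending $a$ to $a'$. You leave open whether the needed $\lambda$ exists. The fallback with a second copy $N'$ is not an argument: it specifies neither $N'$, nor how $M$ sits in it, nor $a'$. Moreover, the requirement that one $a'$ work for all $k$ simultaneously is exactly the difficulty the fallback was meant to bypass. Your worry that rigidity as in Theorem~\ref{Prop:Homo_Unique} might force $\lambda(a)=0$ is unfounded. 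That theorem only concerns maps killing \emph{every} $e_\alpha$, whereas here $\lambda$ need only kill $e_\alpha$ for $\alpha<k$.

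\textbf{The missing observation} is that every element of $N$ has all its coordinates in $\Z[1/p_1,1/p_2]$; this is immediate from the generators of $N$. Hence for any $m\geq k$ the coordinate projection $x\mapsto x_m$ is a homomorphism $N\to\Z[1/p_1,1/p_2]$. It vanishes on $e_\alpha$ for $\alpha<k$ and takes the value $1$ at $a=(1,1,1,\dots)$.

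With this projection your own plan closes. For $a'=-a$, let $w=2\sum_{j<k}e_j\in M$ and $\rho(x)=x+x_mw$. This is an automorphism of $N$, with inverse $x\mapsto x-x_mw$, since $w_m=0$. Then $\rho\circ\tau_k$ fixes the first $k$ coordinates and sends $a$ to $\rho(-a+w)=-a$.

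The paper makes the simpler choice $b=a+e_0=(2,1,1,\dots)$, for which no tail negation is needed. If $C\subseteq\bigoplus_{\alpha<n}\Z[1/p_1,1/p_2]e_\alpha$ with $n\geq 1$, the transvection $x\mapsto x+x_ne_0$ is an automorphism of $N$ fixing $C$ and sending $a$ to $b$. This map together with $\id_N$ witnesses $E_{at}$ directly, so no closure computation is needed for this half.
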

\begin{proof}
    Let  $M, N$ be as in Definition \ref{def:M,N}. We have that $M, N \in K_1$ and $M\leq_{\Kl} N$ by Proposition \ref{Prop:div-MN}. Let $a = (1,1,1,1,\ldots) = y_0+z_0 \in |N|$ and $b = (2,1,1,1,\ldots) = e_0+y_0+z_0 \in |N|$. We show that $\gtp_{\Kl}(a/M; N), \gtp_{\Kl}(b/M; N)$ witness that $\Kl$ is not $(<\aleph_0)$-tame in the following two claims.

    \underline{Claim 1}: $\gtp_{\Kl}(a/M; N) \neq \gtp_{\Kl}(b/M; N)$.

    \underline{Proof of Claim 1}: Assume for the sake of contradiction that $\gtp_{\Kl}(a/M; N) = \gtp_{\Kl}(b/M; N)$. Then there is an isomorphism  $f:\Cl_{\Kl}^{N}(\{a\} \cup M) \to \Cl_{\Kl}^{N}(\{b\} \cup M)$ such that $f(a) = b$ and $f \restr M = \id_M$ by Fact \ref{Prop:Galois_type_Vasey}. Since $a, b \not\in M$, it follows from  Lemma \ref{Lem:Kl_closure_equal_to_TF} that $\Cl_{\Kl}^{N}(\{a\} \cup M)= \Cl_{TF}^{N}(\{a\} \cup M \cup p_2^\omega N)$ and $\Cl_{\Kl}^{N}(\{b\} \cup M)= \Cl_{TF}^{N}(\{b\} \cup M \cup p_2^\omega N)$. Since $p_2^\omega N = N$ by Proposition \ref{Prop:div-MN}, it follows that  $\Cl_{\Kl}^{N}(\{a\} \cup M)= N = \Cl_{\Kl}^{N}(\{a\} \cup M)$. Hence $f$ is an isomorphism from $N$ to $N$. 
    
    Since $f: N \to N$ and $f \restr M = \id_M$, we have that $f =\id_N$ by Theorem \ref{Prop:Homo_Unique}. This is a contradiction as $f(a)= b$. $\dagger_{\text{Claim 1}}$ \medskip

\underline{Claim 2} For all finite $C \subseteq M$,  $\gtp_{\Kl}(a/C; N) = \gtp_{\Kl}(b/C; N)$.

\underline{Proof of Claim 2}: Since $M$ is a direct sum and $C$ is finite, there is $n < \omega$ such that $C \subseteq \bigoplus_{\alpha < n} \Z[1/p_1, 1/p_2] e_\alpha$.

Let $f_a: N \to N$ be given by $f_a(x_0, x_1, x_2, x_3, \ldots) = (x_0+x_n, x_1, x_2, x_3, \ldots)$. Observe that $f_a$ is an isomorphism such that the following diagram commutes
    \[
    \begin{tikzcd}
        N \arrow[r, "f_a"] & N \\
        C \arrow[u, "\subseteq"] \arrow[r, "\subseteq"] & N \arrow[u, "\id_N"]
    \end{tikzcd}
    \]
    and $f_a(a)=b$. Hence $\gtp_{\Kl}(a/C; N) = \gtp_{\Kl}(b/C; N)$. $\dagger_{\text{Claim 2}}$\qedhere

\end{proof}

Although the previous result shows that Galois types in $\K_{TF}$ are, in general, distinct from those in $\Kl$, the next result shows that many times they are the same.

\begin{prop}\label{Prop:Kl_gtp_equal_to_TF}
    If $G \leq_{\Kl} H_1, H_2$, $a_1\in H_1 $, $a_2 \in H_2$, and $p_2^\omega H_1, p_2^\omega H_2 \subseteq G$, then $\gtp_{\K_{TF}}(a_1/G; H_1) = \gtp_{\K_{TF}}(a_2/G; H_2)$ if and only if $\gtp_{\Kl}(a_1/G; H_1) = \gtp_{\Kl}(a_2/G; H_2)$.
\end{prop}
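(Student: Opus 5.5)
Both $\K_{TF}$ and $\Kl$ admit intersections (Fact~\ref{Thm:Properties_TF} and Proposition~\ref{Prop:Kl_admit_int}). So by Fact~\ref{Prop:Galois_type_Vasey} each equality of Galois types becomes the existence of an isomorphism between the appropriate closures that fixes $G$ pointwise and sends $a_1$ to $a_2$. The plan is to show that, under the hypothesis $p_2^\omega H_1, p_2^\omega H_2 \subseteq G$, the $\Kl$-closure and the $\K_{TF}$-closure of $\{a_\ell\}\cup G$ in $H_\ell$ coincide. Once this is done, the two criteria of Fact~\ref{Prop:Galois_type_Vasey} ask for literally the same isomorphism, and the equivalence is immediate.

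First we dispose of the degenerate case $a_\ell \in G$. Then $\Cl^{H_\ell}_{\Kl}(\{a_\ell\}\cup G) = G = \Cl^{H_\ell}_{\K_{TF}}(\{a_\ell\}\cup G)$, because $G \leq_{\Kl} H_\ell$ and $G \leq_p H_\ell$. Next suppose $a_\ell \in H_\ell\setminus G$ and apply Lemma~\ref{Lem:Kl_closure_equal_to_TF} to $G \leq_{\Kl} H_\ell$. If $G \neq p_1^\omega G$, case (2) of that lemma gives the equality of closures directly. If $G = p_1^\omega G$, case (1) gives $\Cl^{H_\ell}_{\Kl}(\{a_\ell\}\cup G) = \Cl^{H_\ell}_{\K_{TF}}(\{a_\ell\}\cup G\cup p_2^\omega H_\ell)$. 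Since $p_2^\omega H_\ell \subseteq G$ by hypothesis, this is $\Cl^{H_\ell}_{\K_{TF}}(\{a_\ell\}\cup G)$.

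We must also handle mixed membership, say $a_1\in G$ and $a_2\notin G$. Then the closures are $G$ and a group properly containing $G$. Neither type notion can be equal here: an isomorphism fixing $G$ pointwise that sends $a_1$ to $a_2$ would force $a_2 = a_1 \in G$. So both sides of the equivalence fail, and the equivalence still holds. This argument is uniform, and in fact it is subsumed by the closure equality already proved, since that equality holds for each $\ell$ independently of the other index.

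Having $\Cl^{H_\ell}_{\Kl}(\{a_\ell\}\cup G)=\Cl^{H_\ell}_{\K_{TF}}(\{a_\ell\}\cup G)$ for $\ell=1,2$, condition (2) of Fact~\ref{Prop:Galois_type_Vasey} reads identically for $\Kl$ and for $\K_{TF}$, which gives the result. The main point needing care is only the case split in Lemma~\ref{Lem:Kl_closure_equal_to_TF}, namely that the hypothesis $p_2^\omega H_\ell\subseteq G$ absorbs the extra generators $p_2^\omega H_\ell$. We should also check that Fact~\ref{Prop:Galois_type_Vasey} applies to $\K_{TF}$ with $G$ as the parameter set; it does, since $\K_{TF}$ admits intersections.
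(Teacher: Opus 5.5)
Your proposal is correct and is essentially the paper's proof. You use Lemma~\ref{Lem:Kl_closure_equal_to_TF} together with $p_2^\omega H_\ell \subseteq G$ to show that the $\Kl$- and $\K_{TF}$-closures of $\{a_\ell\}\cup G$ in $H_\ell$ coincide, and then apply Fact~\ref{Prop:Galois_type_Vasey} in both classes, which admit intersections; your per-index handling of the case $a_\ell\in G$ is slightly more careful than the paper's ``nothing to show'' remark but changes nothing essential.
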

\begin{proof} We may assume $a_1, a_2 \notin G$ as otherwise there is nothing to show. Observe that since $p_2^\omega H_1, p_2^\omega H_2 \subseteq G$, $\{a_i\} \cup G \cup p_2^\omega H_i = \{a_i\} \cup G$ for $i\in \{1,2\}$. Thus by Proposition \ref{Lem:Kl_closure_equal_to_TF}, $\Cl_{\Kl}^{H_i}(\{a_i\} \cup G) = \Cl_{\K_{TF}}^{H_i}(\{a_i\} \cup G)$. The result then follows from the fact that  $\K_{TF}$ and $\Kl$ admit intersections and Fact \ref{Prop:Galois_type_Vasey}.
\end{proof}

We now prove that $\Kl$  has a significant amount of tameness.

\begin{lemma}\label{Lem:Kl_aleph1_tame}
    $\Kl$ is $\aleph_0$-tame.
\end{lemma}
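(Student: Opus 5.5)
Our plan is to show that for every $M \in K_1$ and $p=\gtp_{\Kl}(a_1/M;H_1)\neq q=\gtp_{\Kl}(a_2/M;H_2)$ there is a countable $A\subseteq M$ with $p\restr A\neq q\restr A$. Since $M\leq_{\Kl} H_i$, we may assume $a_1,a_2\notin M$; if exactly one lies in $M$, or both lie in $M$ and are distinct, then a singleton already distinguishes the types. We split into the two cases of Lemma \ref{Lem:Kl_closure_equal_to_TF}.

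\emph{Case 1: $M\neq p_1^\omega M$.} Then $p_2^\omega H_i=p_2^\omega M\subseteq M$, so Proposition \ref{Prop:Kl_gtp_equal_to_TF} gives $\gtp_{\K_{TF}}(a_1/M;H_1)\neq\gtp_{\K_{TF}}(a_2/M;H_2)$. By $(<\aleph_0)$-tameness of $\K_{TF}$ (Fact \ref{Thm:Properties_TF}) there is a finite $C_0\subseteq M$ with $\gtp_{\K_{TF}}(a_1/C_0;H_1)\neq\gtp_{\K_{TF}}(a_2/C_0;H_2)$. We must transfer this back to $\Kl$ over a countable set, and Claim 2 of Theorem \ref{Thm:Kl_not_aleph0_tame} shows that $C_0$ alone will not suffice. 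We take $A:=C_0\cup p_1^\omega M\cup p_2^\omega M\cup\{g\}$ with $g\in M\setminus p_1^\omega M$; this set is countable since $M\in K_1$. Suppose $\gtp_{\Kl}(a_1/A;H_1)=\gtp_{\Kl}(a_2/A;H_2)$. By Fact \ref{Prop:Galois_type_Vasey} there is an isomorphism $f:\Cl^{H_1}_{\Kl}(A\cup\{a_1\})\cong\Cl^{H_2}_{\Kl}(A\cup\{a_2\})$ fixing $A$ with $f(a_1)=a_2$. The key computation is that $\Cl^{H_i}_{\Kl}(A\cup\{a_i\})=\Cl^{H_i}_{\K_{TF}}(A\cup\{a_i\})$. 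Indeed, the pure closure $P$ contains $p_1^\omega H_i=p_1^\omega M$ and $p_2^\omega H_i=p_2^\omega M$, hence has the same $p_\ell^\omega$ as $H_i$, and $P\leq_{\Kl}H_i$. The reverse inclusion is the general observation from the proof of Lemma \ref{Lem:Kl_closure_equal_to_TF}. Then $f$ witnesses equality of the $\K_{TF}$-types over $A\supseteq C_0$, hence over $C_0$, which is a contradiction.

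\emph{Case 2: $M=p_1^\omega M$.} Then $M$ is countable, so $A:=M$ works trivially.

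The main obstacle is the closure computation in Case 1: we need that adding $p_1^\omega M\cup p_2^\omega M$ to $A$ makes the $\Kl$-closure coincide with the pure closure. We also need that $P\neq p_1^\omega P$ (guaranteed by $g$) so that condition (2)(c) holds, and that $A$ remains countable, which is exactly where the cardinality bound in the definition of $K_1$ is used.
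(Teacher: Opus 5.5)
Your argument is correct, and its skeleton matches the paper's: the same case split on whether $M=p_1^\omega M$, with the countable case trivial, and the other case handled by Proposition~\ref{Prop:Kl_gtp_equal_to_TF} together with $(<\aleph_0)$-tameness of $\K_{TF}$. You diverge only in the step transferring back to $\Kl$.

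There your remark that Claim~2 of Theorem~\ref{Thm:Kl_not_aleph0_tame} shows $C_0$ alone will not suffice is mistaken. Equality of $\Kl$-types over any set always implies equality of $\K_{TF}$-types over that set. The reason is that $K_1\subseteq TF$ and every $\Kl$-embedding is a pure embedding, so each $E^{\Kl}_{at}$-step is an $E^{\K_{TF}}_{at}$-step. Hence $\gtp_{\K_{TF}}(a_1/C_0;H_1)\neq\gtp_{\K_{TF}}(a_2/C_0;H_2)$ already gives $\gtp_{\Kl}(a_1/C_0;H_1)\neq\gtp_{\Kl}(a_2/C_0;H_2)$.

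This is exactly how the paper argues, via the closure isomorphism over a finite $D$. It also yields more: $(<\aleph_0)$-tameness at every model with $M\neq p_1^\omega M$, which is Lemma~\ref{Lem:Kl_aleph_theta_tame}. Claim~2 belongs to your Case~2 ($M=p_1^\omega M$), where the $\K_{TF}$-types over $M$ actually coincide and Proposition~\ref{Prop:Kl_gtp_equal_to_TF} does not apply. So it says nothing about your Case~1.

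Your enlargement of $C_0$ to $A$ and the computation $\Cl^{H_i}_{\Kl}(A\cup\{a_i\})=\Cl^{H_i}_{\K_{TF}}(A\cup\{a_i\})$ are valid but superfluous. In particular, $g$ is not needed for (2)(c): $p_2^\omega P=p_2^\omega H_i\cap P=p_2^\omega H_i$ gives the second disjunct directly.
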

\begin{proof}
    Suppose $G \leq_{\Kl} H_1, H_2$, $a_1\in H_1 $, $a_2 \in H_2$, and $\gtp_{\Kl}(a_1/G; H_1) \restr C = \gtp_{\Kl}(a_2/G; H_2) \restr C$ for every countable $C\subseteq G$. We have two cases:

    \underline{Case 1}: $G=p_1^\omega G$. Then $G$ is countable by (1) of Definition \ref{Def:Kl(p_1,p_2)}, and $\gtp_{\Kl}(a_1/G; H_1) = \gtp_{\Kl}(a_1/G; H_1) \restr G= \gtp_{\Kl}(a_2/G; H_2) \restr G = \gtp_{\Kl}(a_2/G; H_2)$.

    \underline{Case 2}: $G \neq p_1^\omega G$. Since $G\leq_{\Kl} H_1, H_2$, we have that $ p_2^\omega H_1 = p_2^\omega H_2 = p_2^\omega G  \subseteq G$. Then by Proposition \ref{Prop:Kl_gtp_equal_to_TF}, it is enough to show $\gtp_{\K_{TF}}(a_1/G; H_1) = \gtp_{\K_{TF}}(a_2/G; H_2)$. Since $\K_{TF}$ is $(<\aleph_0)$-tame by Fact \ref{Thm:Properties_TF}, it suffices to prove that $\gtp_{\K_{TF}}(a_1/G; H_1) \restr D = \gtp_{\K_{TF}}(a_2/G; H_2) \restr D$ for all finite $D \subseteq G$.

    Let $D \subseteq G$ be finite. By assumption, $\gtp_{\Kl}(a_1/G; H_1) \restr D = \gtp_{\Kl}(a_2/G; H_2) \restr D$, so there exists $f:\Cl_{\Kl}^{H_1}(\{a_1\} \cup D) \cong \Cl_{\Kl}^{H_2}(\{a_2\} \cup D)$ with $f(a_1)=a_2$ and $f \restr D = \id_D$ by Fact \ref{Prop:Galois_type_Vasey}.

    Observe that $\Cl_{\Kl}^{H_1}(\{a_1\} \cup D) \leq_p H_1$ and $\Cl_{\Kl}^{H_2}(\{a_2\} \cup D) \leq_p H_2$, and both are torsion-free, so we have
    \begin{align*}
        (a_1, D, H_1) & E_{at}^{\K_{TF}} (a_1, D, \Cl_{\Kl}^{H_1}(\{a_1\} \cup D)) \\
        & E_{at}^{\K_{TF}} (f(a_1), f[D], f[\Cl_{\Kl}^{H_1}(\{a_1\} \cup D)]) = (a_2, D, \Cl_{\Kl}^{H_2}(\{a_2\} \cup D)) \\
        & E_{at}^{\K_{TF}} (a_2, D, H_2).
    \end{align*}
    Hence $\gtp_{\K_{TF}}(a_1/G; H_1) \restr D = \gtp_{\K_{TF}}(a_2/G; H_2) \restr D$.
\end{proof}

\begin{lemma}\label{Lem:Kl_aleph_theta_tame} For every  uncountable cardinal $\theta$,
$\Kl$ is $(<\aleph_0, \theta)$-tame.
\end{lemma}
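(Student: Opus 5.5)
Fix an uncountable $\theta$, a model $G \in K_1$ with $\|G\| = \theta$, and types $\gtp_{\Kl}(a_1/G; H_1) \neq \gtp_{\Kl}(a_2/G; H_2)$ with $G \leq_{\Kl} H_1, H_2$. The plan is to reduce to Case 2 of the proof of Lemma \ref{Lem:Kl_aleph1_tame} and then use the $(<\aleph_0)$-tameness of $\K_{TF}$.

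First, $G \neq p_1^\omega G$. Indeed, if $G = p_1^\omega G$, then Definition \ref{Def:Kl(p_1,p_2)}(1) gives $|G| = |p_1^\omega G| \leq \aleph_0 < \theta$, which is impossible. So Condition (2)(c) of Definition \ref{Def:Kl(p_1,p_2)} yields
\[ p_2^\omega H_1 = p_2^\omega G = p_2^\omega H_2 \subseteq G. \]
By Proposition \ref{Prop:Kl_gtp_equal_to_TF}, the $\Kl$-types over $G$ coincide exactly when the $\K_{TF}$-types coincide. Hence $\gtp_{\K_{TF}}(a_1/G; H_1) \neq \gtp_{\K_{TF}}(a_2/G; H_2)$.

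Next, since $\K_{TF}$ is $(<\aleph_0)$-tame (Fact \ref{Thm:Properties_TF}), there is a finite $D \subseteq G$ with
\[ \gtp_{\K_{TF}}(a_1/D; H_1) \neq \gtp_{\K_{TF}}(a_2/D; H_2). \]
It remains to transfer this inequality back to $\Kl$, i.e.\ to show that equality of the $\Kl$-types over $D$ implies equality of the $\K_{TF}$-types over $D$. This is exactly the final step in the proof of Lemma \ref{Lem:Kl_aleph1_tame}. Suppose $\gtp_{\Kl}(a_1/D; H_1) = \gtp_{\Kl}(a_2/D; H_2)$. Fact \ref{Prop:Galois_type_Vasey} gives an isomorphism of the $\Kl$-closures $\Cl_{\Kl}^{H_1}(\{a_1\} \cup D) \cong \Cl_{\Kl}^{H_2}(\{a_2\} \cup D)$ fixing $D$ and sending $a_1$ to $a_2$. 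These closures are pure subgroups of $H_1$ and $H_2$ respectively. Chaining $E^{\K_{TF}}_{at}$ through them, as in that proof, shows the $\K_{TF}$-types over $D$ are equal, a contradiction. Thus $\gtp_{\Kl}(a_1/D; H_1) \neq \gtp_{\Kl}(a_2/D; H_2)$ with $D$ finite, as required.

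The only delicate point is the first step: the hypothesis that $\theta$ is uncountable is used solely to exclude $G = p_1^\omega G$. Note that the counterexample in Theorem \ref{Thm:Kl_not_aleph0_tame} lives precisely in that excluded case, with $M = p_1^\omega M$ countable. Everything else reuses earlier results verbatim, so no real obstacle is expected.
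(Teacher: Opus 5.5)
Your proposal is correct and is the paper's argument: the paper also notes that $\|G\| = \theta > \aleph_0$ forces $p_1^\omega G \neq G$. It then reruns Case~2 of Lemma~\ref{Lem:Kl_aleph1_tame}, using $p_2^\omega H_i = p_2^\omega G \subseteq G$, Proposition~\ref{Prop:Kl_gtp_equal_to_TF}, the $(<\aleph_0)$-tameness of $\K_{TF}$, and the transfer back over a finite $D$, which is exactly your sequence of steps stated contrapositively.
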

\begin{proof}
Observe that if $G \in (K_1)_\theta$, then $p_1^\omega G \neq G$. Hence the proof of Case 2 of Lemma \ref{Lem:Kl_aleph1_tame} can be used to show that $\Kl$ is $(<\aleph_0, \theta)$-tame. 
\end{proof}

We characterize the stability spectrum of $\Kl$ using Proposition \ref{Prop:Kl_gtp_equal_to_TF}.

\begin{theorem}\label{Thm:Kl_stable}
       $\Kl$ is stable in $\lambda$ if and only if $\lambda^{\aleph_0} = \lambda$.
\end{theorem}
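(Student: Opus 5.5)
The plan is to reduce the stability spectrum of $\Kl$ to that of $\K_{TF}$ (Fact \ref{Thm:Properties_TF}(4)) via Proposition \ref{Prop:Kl_gtp_equal_to_TF}. Since $\LS(\Kl)=\aleph_0$, only infinite $\lambda$ matter.

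\emph{Sufficiency.} Assume $\lambda^{\aleph_0}=\lambda$ and let $G\in K_1$ with $\|G\|=\lambda$; we bound $|\gS_{\Kl}(G)|$.
\begin{enumerate}
\item If $G=p_1^\omega G$, then $G$ is countable, so $\lambda=\aleph_0$. But $\aleph_0^{\aleph_0}\neq\aleph_0$, so this case cannot occur.
\item Hence $G\neq p_1^\omega G$. For every $H$ with $G\leq_{\Kl}H$, condition (2)(c) gives $p_2^\omega H=p_2^\omega G\subseteq G$.
\item By Proposition \ref{Prop:Kl_gtp_equal_to_TF}, for all such $H_1,H_2$ and $a_i\in H_i$, the $\Kl$-types over $G$ coincide exactly when the $\K_{TF}$-types do. So $\gtp_{\Kl}(a/G;H)\mapsto\gtp_{\K_{TF}}(a/G;H)$ is a well-defined injection of $\gS_{\Kl}(G)$ into $\gS_{\K_{TF}}(G)$.
\item Since $\K_{TF}$ is stable in $\lambda$, $|\gS_{\K_{TF}}(G)|\le\lambda$, hence $|\gS_{\Kl}(G)|\le\lambda$.
\end{enumerate}

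\emph{Necessity.} Assume $\lambda^{\aleph_0}>\lambda$. We need $G\in K_1$ of size $\lambda$ with more than $\lambda$ types. The idea is to take a witness to instability of $\K_{TF}$ and move it into $K_1$.
\begin{enumerate}
\item Take a concrete witness avoiding $p_1$- and $p_2$-divisibility. Let $G=\bigoplus_{i<\lambda}\Z e_i$, which satisfies $p_1^\omega G=p_2^\omega G=0$.
\item If $\lambda^{\aleph_0}>\lambda$ we are not in the case $\operatorname{cf}\lambda>\aleph_0$ with $\lambda=\mu^{\aleph_0}$ for some $\mu$, but the standard counting still works. Choose $\lambda^{\aleph_0}$ many countable subsets $S\subseteq\lambda$, and for each $S$ an element realizing ``$q^{k}\mid x-\sum_{i\in S,\,i<k\text{th}}\ldots$'', i.e. an element of the $q$-adic completion for a prime $q\notin\{p_1,p_2\}$.
\item Realize each in $H_S=\langle G,x_S\rangle_*$, the pure closure inside $\widehat{G}^{(q)}$. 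Because only $q$-divisibility is added, $p_\ell^\omega H_S=0$ for $\ell\in\{1,2\}$, so $G\leq_{\Kl}H_S$.
\end{enumerate}
More cleanly, I would rely on the known Baldwin--Eklof--Trlifaj proof of Fact \ref{Thm:Properties_TF}(4): its witnesses are reduced $q$-adic constructions with no $p_1,p_2$-divisible parts, and so they lie in $\Kl$ with $\leq_{\Kl}$-extensions.

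Distinct $\K_{TF}$-types over $G$ give distinct $\Kl$-types by Proposition \ref{Prop:Kl_gtp_equal_to_TF}, since $p_2^\omega H_S=0\subseteq G$. This yields $\lambda^{\aleph_0}>\lambda$ many $\Kl$-types over $G$.

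The main obstacle is step (2) of the necessity direction. There we must verify that the instability witnesses for $\K_{TF}$ can be chosen with $p_1^\omega=p_2^\omega=0$ throughout, so that they live in $\Kl$. The direct plan is to count types of elements $x_\eta$ in the $q$-adic completion of $\bigoplus_{i<\lambda}\Z e_i$ indexed by $\eta\in{}^{\omega}\lambda$. Distinct $\eta$ give distinct $q$-divisibility patterns of $x_\eta-g$ for $g\in G$, and hence distinct types.
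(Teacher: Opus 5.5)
Your argument follows the paper's proof. Sufficiency is the same: $G=p_1^\omega G$ would force $\lambda=\aleph_0$, so $p_2^\omega H=p_2^\omega G\subseteq G$ for every $\Kl$-extension $H$, and Proposition~\ref{Prop:Kl_gtp_equal_to_TF} injects $\gS_{\Kl}(G)$ into $\gS_{\K_{TF}}(G)$. For necessity the paper also imports the Baldwin--Eklof--Trlifaj witnesses. It records the precise property needed: there $A$ and all $B_i$ are \emph{free}, so $p^\omega=0$ for every prime $p$, and $A,B_i\in K_1$ and $A\leq_{\Kl}B_i$ are immediate. Your fallback ``rely on the known proof'' is therefore exactly the paper's route.

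Your explicit construction in steps (2)--(3), however, fails as written. Since $p_1,p_2$ are units in the $q$-adic integers, $\widehat{G}^{(q)}$ is $p_1$- and $p_2$-divisible, and hence so is every pure subgroup of it. So the pure closure $H_S$ satisfies $p_1^\omega H_S=H_S$, which has size $\lambda$. Thus $H_S\notin K_1$ for uncountable $\lambda$, and $G$ is not even pure in $H_S$, since $G\subseteq p_1H_S$. Your remark that ``only $q$-divisibility is added'' is false for this $H_S$.

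What you need is the $q$-pure closure. Take $\eta\in{}^{\omega}\lambda$ injective and $x_\eta=\sum_k q^k e_{\eta(k)}$. The $q$-pure closure is $H_\eta=\langle G,y_n:n<\omega\rangle$ with $y_n=\sum_{k\geq n}q^{k-n}e_{\eta(k)}$, so that $qy_{n+1}=y_n-e_{\eta(n)}$. This group is free on $\{y_n:n<\omega\}\cup\{e_i:i\notin\operatorname{ran}\eta\}$. Moreover $H_\eta/G\cong\Z[1/q]$ is torsion-free, so $G\leq_p H_\eta$. It is exactly a free witness of the kind the paper cites, and your $q$-divisibility comparison then separates the $\lambda^{\aleph_0}$ types.
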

\begin{proof}
    $\Rightarrow$: We do the proof by contrapositive. Suppose $\lambda^{\aleph_0} \neq \lambda$. Then there is $\{ \gtp_{\K_{TF}}(b_i/A; B_i) : i <\lambda^+\}$  a set of distinct Galois types in $\K_{TF}$ such that $A$ is free and $\| A \| = \lambda$, and $B_i$ is free for all $i < \lambda^+$ by \cite[p. 37]{baldwine}. Since $p^\omega F = 0$ for every free abelian  group $F$ and prime number $p$, it follows that $A, B_i \in K_1$ and $A \leq _{\Kl} B_i$ for all $i < \lambda^+$. Furthermore,  it follows from Proposition \ref{Prop:Kl_gtp_equal_to_TF} that $\gtp_{\Kl}(b_i/A; B_i) \neq \gtp_{\Kl}(b_j/A; B_j)$ for all $i \neq j < \lambda^+$. Therefore, $|\gS_{\Kl}(A)| \geq \lambda^+$,  and $\Kl$ is not stable in $\lambda$.
    
   $\Leftarrow$: Suppose $\lambda^{\aleph_0} = \lambda$ and let $G \in K_1$ with $\|G\|=\lambda$. Assume for sake of contradiction that $|\gS_{\Kl}(G)| > \lambda$. Let $\{\gtp_{\Kl}(a_i/G; H_i) : i<\lambda^+\}$ be an enumeration of distinct Galois types in $\Kl$. Let $\Phi: \lambda^+ \to \gS_{\K_{TF}}(G)$ be given by $\Phi(i)=\gtp_{\K_{TF}}(a_i/G; H_i)$.

    As $|\gS_{\K_{TF}}(G)| \leq \lambda$ since $\K_{TF}$ is stable in $\lambda$ by Fact \ref{Thm:Properties_TF}, there exist $i\neq j <\lambda^+$ such that $\gtp_{\K_{TF}}(a_i/G; H_i) = \gtp_{\K_{TF}}(a_j/G; H_j)$. Observe that $p_1^\omega G \neq G$ since $\lambda >\aleph_0$. As $G \leq_{\Kl} H_i, H_j$, we have that $  p_2^\omega H_i = p_2^\omega H_j = p_2^\omega G \subseteq G$. Then $\gtp_{\Kl}(a_i/G; H_i) = \gtp_{\Kl}(a_j/G; H_j)$ by Proposition \ref{Prop:Kl_gtp_equal_to_TF}, a contradiction.
    \end{proof}

We conclude this section with the proof of Theorem \ref{Thm:main_Kl}.

\begin{proof}[Proof of Theorem \ref{Thm:main_Kl}]
    $\Kl$ is an AEC with $\LS(\Kl) = \aleph_0$ by Proposition \ref{Prop:Kl_is_AEC}.

    \begin{enumerate}
        \item Follows from Theorem \ref{Thm:Kl_not_aleph0_tame}.
        \item Follows from Lemma \ref{Lem:Kl_aleph1_tame}.
        \item Follows from Theorem \ref{Thm:Kl_stable}.
        \item Follows from Proposition \ref{Prop:Kl_NMM_noJEP_noAP}.
        \item Follows from Proposition \ref{Prop:Kl_admit_int}. \qedhere
    \end{enumerate}
\end{proof}

\section{Beyond $(<\aleph_0)$-tameness}\label{Section:Kk}

We define the second family of abstract elementary classes of interest in this paper.

\begin{defin}\label{Def:Kk(p_1,p_2)}
    Let $p_1, p_2$ be distinct primes and let $\kappa$ be an infinite cardinal. We define $\Kk=\Kk(\kappa)=\Kk(p_1, p_2, \kappa) = (K_2(\kappa), \leq_{\Kk(\kappa)})$ as follows:
    \begin{enumerate}
        \item $K_2(\kappa)$ is the class of torsion-free abelian $G$ such that $|p_1^\omega G|, |p_2^\omega G| \leq \kappa$.
        \item For $G_1, G_2 \in K_2(\kappa)$, we have that $G_1 \leq_{\Kk(\kappa)} G_2$ if
        \begin{enumerate}
            \item $G_1 \leq_p G_2$,
            \item $p_1^\omega G_1 = p_1^\omega G_2$, and
            \item either $p_2^\omega G_1 = p_1^\omega G_1$ or $p_2^\omega G_1 = p_2^\omega G_2$.
        \end{enumerate}
    \end{enumerate}
\end{defin}

\begin{remark}
Observe that the only differences between $\Kl$ (Definition \ref{Def:Kl(p_1,p_2)}) and $\Kk(\kappa)$ (Definition \ref{Def:Kk(p_1,p_2)}) are that in Condition (1) we changed $\aleph_0$ to $\kappa$ and that in Condition (2)(c) we changed $p_1^\omega G_1 = G_1$ to $p_1^\omega G_1 = p_2^\omega G_1$. The second change had to be implemented in order to show the failure of tameness for models above $LS(\Kk(\kappa))$ (see Remark \ref{Remark:up}).

Since $\Kl$ is similar to $\Kk(\kappa)$, some of the arguments from Section \ref{sectio-3} can be easily adapted to obtain results for $\Kk(\kappa)$, so we will present many results without a proof.
\end{remark}

\begin{remark}
Observe that $\leq_{\Kk(\kappa)}\,=\,\leq_{\Kk(\mu)}$ for all cardinals $\kappa, \mu$. Due to this we will always denote the relation by $\leq_{\Kk}$.

Although we only have that $\Kk(\kappa) \subseteq \Kk(\mu)$ if $\kappa < \mu$, many times we will simply write $\Kk$ instead of $\Kk(\kappa)$.
\end{remark}

The objective of this section is to show that this class of torsion-free abelian groups is an AEC such that the following properties hold.  Compare to Theorem~\ref{Thm:main_Kl}.

\begin{theorem}\label{Thm:main_Kk}
    Let $\kappa$ be an infinite cardinal. $\Kk =\Kk(\kappa) =  (K_2, \leq_{\Kk})$ is an abstract elementary class with $\LS(\Kk)= \kappa$ such that:
    \begin{enumerate}
        \item If $\kappa = 2^\mu$ for some regular uncountable cardinal $\mu$ less than the first measurable cardinal, then $\Kk(\kappa)=\Kk(2^\mu)$ is not $(<\mu)$-tame.
        \item If $\lambda^{\aleph_0} = \lambda$ and $\lambda \ge \kappa$, then $\Kk$ is almost stable in $\lambda$.
        \item $\Kk$ has no maximal models, but 
        fails joint embedding and amalgamation.
        \item $\Kk$ admits intersections.
    \end{enumerate}
\end{theorem}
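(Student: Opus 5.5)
We plan to follow the blueprint of Section~\ref{sectio-3}, adapting each step to the modified Condition (2)(c) and to the bound $\kappa$ in place of $\aleph_0$.

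\textbf{AEC axioms and $\LS(\Kk)=\kappa$.} First we would show that $\leq_{\Kk}$ is transitive and coherent. The proof of Proposition~\ref{Lem:Transitivity_Coherence_Kl} goes through once Proposition~\ref{Lem:Lemma1_Kl} is replaced by the following observation. If $G_0\leq_{\Kk} G_1$ and $p_2^\omega G_1=p_1^\omega G_1$, then $p_2^\omega G_0=p_2^\omega G_1\cap G_0=p_1^\omega G_1\cap G_0=p_1^\omega G_0$, using Fact~\ref{Facts:Uncited}(1). So the ``degenerate'' alternative of (2)(c) propagates downward.

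The Tarski--Vaught axioms and smoothness are handled as in Lemma~\ref{Prop:Kl_is_AEC}, splitting on whether every $G_i$ satisfies $p_2^\omega G_i=p_1^\omega G_i$.
\begin{itemize}
\item If they all do, then $p_2^\omega G_\delta=\bigcup_i p_2^\omega G_i=\bigcup_i p_1^\omega G_i=p_1^\omega G_0$.
\item Otherwise, $p_2^\omega G_\delta$ stabilizes from the first index where the equality fails.
\end{itemize}
For L\"owenheim--Skolem we take the pure closure of $A\cup p_1^\omega G\cup p_2^\omega G$, which has size at most $|A|+\kappa$. Minimality of $\LS$ should come from an example such as $\Z[1/p_1]^{(\kappa)}\oplus\Z$: any $\Kk$-submodel must contain $p_1^\omega$ of the whole group, which has size $\kappa$.

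\textbf{No maximal models, failure of joint embedding and amalgamation, admitting intersections.}
\begin{itemize}
\item No maximal models: use $G\leq_{\Kk} G\oplus\Z$.
\item Failure of joint embedding: use $\Z[1/p_1]$ and $\Z$, as before.
\item Failure of amalgamation: take $G_0=\Z[1/p_1]$, $G_1=G_0\oplus\Z$, $G_2=G_0\oplus\Z[1/p_2]$. Here $p_2^\omega G_\ell\ne p_1^\omega G_\ell$ for $\ell=1,2$, so any amalgam $H$ would need $p_2^\omega H$ equal both to $0$ and to a copy of $\Z[1/p_2]$.
\item Intersections: repeat Proposition~\ref{Prop:Kl_admit_int} using Fact~\ref{Facts:Uncited}(3). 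When some $H_{i_0}$ has $p_2^\omega H_{i_0}=p_1^\omega H_{i_0}$, we get $p_2^\omega(\bigcap H_i)\subseteq p_1^\omega G\cap p_2^\omega G$, and we must check that this equals $p_1^\omega(\bigcap H_i)$. This uses that $p_1^\omega H_i=p_1^\omega G$ for all $i$, together with purity.
\end{itemize}

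\textbf{Almost stability.} We would prove the analogue of Lemma~\ref{Lem:Kl_closure_equal_to_TF} and Proposition~\ref{Prop:Kl_gtp_equal_to_TF}: if $G\leq_{\Kk}H$ and $p_2^\omega H\subseteq G$, then $\Kk$-closures over $G\cup\{a\}$ coincide with $\K_{TF}$-closures. Hence $\Kk$-types over $G$ inside a fixed $N$ agree with $\K_{TF}$-types whenever $p_2^\omega N\subseteq G$. In general we have $p_2^\omega N$ equal to $p_2^\omega G$ or to $p_1^\omega G$ ($\subseteq G$), or else $p_2^\omega G=p_1^\omega G$ and $p_2^\omega N$ is extra. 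In that last case, we would pass to $G'=\Cl_{TF}^N(G\cup p_2^\omega N)$, which has size at most $\lambda$ since $\lambda\ge\kappa$, and count types over $G'$ in $\K_{TF}$. This gives at most $\lambda^{\aleph_0}=\lambda$ types, by Fact~\ref{Thm:Properties_TF}(4). This is why only almost stability, inside a fixed $N$, is claimed.

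\textbf{Failure of $(<\mu)$-tameness: the main obstacle.} This is the hardest part, and it depends on the later algebraic Theorem~\ref{Prop:measurable_homo}. For $\mu$ regular uncountable and below the first measurable, every endomorphism of a suitable subgroup of $\prod_{\alpha<\mu}\Z e_\alpha$ is determined by its values on the $e_\alpha$; this goes through {\L}o\'s--Eda-type slenderness arguments. We would mimic Definition~\ref{def:M,N} with $\mu$ coordinates:
\begin{itemize}
\item $M=\bigoplus_{\alpha<\mu}\Z[1/p_1,1/p_2]e_\alpha$, and
\item $N$ a group of size at most $2^\mu$ generated over $M$ by $\Z[1/p_2]$-multiples of product elements coded by the primes $q_1,q_2$, arranged so that $p_1^\omega N=M$, $p_2^\omega N=N$, and $q_i^\omega N=0$.
\end{itemize}
Then $M\leq_{\Kk}N$ via the alternative $p_2^\omega M=M=p_1^\omega M$ of (2)(c), and $|p_2^\omega N|\le 2^\mu=\kappa$.

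Next take $a=(1,1,\dots)$ and $b=e_0+a$. The closures over $M\cup\{a\}$ and over $M\cup\{b\}$ are all of $N$, so by rigidity the types differ. For $C\subseteq M$ with $|C|<\mu$, regularity of $\mu$ bounds the supports of $C$ below some $\beta<\mu$, and the shear map $x\mapsto x+x_\beta e_0$ is an automorphism of $N$ fixing $C$ and sending $a$ to $b$. The delicate points are the choice of $N$ that makes the rigidity theorem applicable, and the verification that this shear preserves $N$.
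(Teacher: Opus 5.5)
The AEC axioms, $\LS(\Kk)=\kappa$, admitting intersections and almost stability go through as you describe; almost stability is exactly the argument of Lemma~\ref{Thm:Kk_almost_stable}. Part (1), however, has a genuine gap, and your amalgamation counterexample in part (3) is wrong.

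\textbf{Part (1).} You take the base to be $M=\bigoplus_{\alpha<\mu}\Z[1/p_1,1/p_2]e_\alpha$, so $M\le_{\Kk}N$ forces $p_1^\omega N=M$. You never produce an $N$ with this property that also contains $a=(1)_{\alpha<\mu}$ and is rigid over $M$. What you call a delicate point is the whole difficulty, and both available tools work against it.

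The rigidity theorem you cite, Theorem~\ref{Prop:measurable_homo}, is proved by {\L}o\'s--Eda for $N_\mu=\langle M_\mu,\prod_{\alpha<\mu}\Z[1/p_2]e_\alpha\rangle$. There $p_1^\omega N_\mu=V_\mu\supsetneq M_\mu$: the element with coordinate $p_1^n$ at each $n<\omega$ and $0$ elsewhere lies in $V_\mu$ by Proposition~\ref{Prop:Basic-Nmu}(1). Hence $M_\mu\not\le_{\Kk}N_\mu$.

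Nor does the $q_1,q_2$-coding of Section~3 scale up. The proof of Theorem~\ref{Prop:Homo_Unique} only controls elements $y$ with $y\in M+q_1^kN$ for every $k$. Inside $\prod_{\alpha<\mu}\Z[1/p_1,1/p_2]e_\alpha$, such a $y$ has, for each $k$, all but finitely many coordinates in $q_1^k\Z[1/p_1,1/p_2]$, hence only countably many nonzero coordinates. So $a$ is not in the subgroup generated by $M$ and such elements. Killing $g(a)$ then needs a {\L}o\'s--Eda type argument, and the available one uses full products, which is exactly what creates the extra $p_1$-divisible elements.

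The missing idea is to change the base, not $N$: take $G=V_\mu=p_1^\omega N_\mu$ and $H=N_\mu$.
\begin{enumerate}
\item By Proposition~\ref{Prop:Basic-Nmu}(2), $p_1^\omega V_\mu=p_2^\omega V_\mu=V_\mu$, so $V_\mu\le_{\Kk}N_\mu$ through the degenerate alternative of (2)(c).
\item Since $M_\mu\subseteq V_\mu$, an isomorphism fixing $V_\mu$ fixes every $e_\alpha$. So Theorem~\ref{Prop:measurable_homo} separates the types; both closures are all of $N_\mu$, because $a,b\in p_2^\omega N_\mu$ and $a,b\notin V_\mu$.
\item Elements of $V_\mu$ have countable support. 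So $\operatorname{cf}(\mu)>\aleph_0$ bounds each support, and regularity bounds fewer than $\mu$ of them below a single $\beta<\mu$. Then your shear $x\mapsto x+x_\beta e_0$ works verbatim. This is where the uncountability of $\mu$ is actually used.
\end{enumerate}

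\textbf{Failure of amalgamation.} Your witness is copied from $\Kl$ and does not survive the change in (2)(c). Since $G_0=\Z[1/p_1]$ has $p_2^\omega G_0=0\neq G_0=p_1^\omega G_0$, every $\Kk$-extension $G_2$ of $G_0$ must satisfy $p_2^\omega G_2=p_2^\omega G_0=0$. Hence $G_0\not\le_{\Kk}G_0\oplus\Z[1/p_2]$.

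Use a base satisfying the degenerate alternative instead, e.g.\ $G_0=\Z[1/p_1,1/p_2]$, $G_1=G_0\oplus\Z[1/p_2]$ and $G_2=G_0\oplus\Z[1/p_2]\oplus\Z[1/p_2]$. Then $G_0\le_{\Kk}G_1,G_2$, and $p_2^\omega G_\ell=G_\ell\neq G_0=p_1^\omega G_\ell$ for $\ell=1,2$. So in any amalgam $H$, the group $p_2^\omega H$ would be the image both of a rank-$2$ group and of a rank-$3$ group, which is impossible.
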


\begin{remark}\label{Remark:Kk_AEC_Similar_Kl} The proofs that $\Kk$ is an AEC and that
  (3) and (4) of Theorem~\ref{Thm:main_Kk} hold are analogous to those of Lemma \ref{Prop:Kl_is_AEC}, Proposition \ref{Prop:Kl_NMM_noJEP_noAP}, and Proposition \ref{Prop:Kl_admit_int}, respectively. \end{remark}

As in Section \ref{sectio-3} it is possible to obtain a simple description of the closure operator in $\Kk$. 

\begin{prop}\label{Prop:Kk_closure}
    Let $G \leq_{\Kk} H$ and $a\in H\setminus G$.
    \begin{enumerate}
        
        \item If $p_1^\omega \Cl_{\K_{TF}}^H(\{a\} \cup G) \neq p_2^\omega \Cl_{\K_{TF}}^H(\{a\} \cup G)$, then $\Cl_{\Kk}^H(\{a\} \cup G) = \Cl_{\K_{TF}}^H(\{a\} \cup G \cup p_2^\omega H)$. 
        \item If $p_1^\omega \Cl_{\K_{TF}}^H(\{a\} \cup G) = p_2^\omega \Cl_{\K_{TF}}^H(\{a\} \cup G)$, then $\Cl_{\Kk}^H(\{a\} \cup G) = \Cl_{\K_{TF}}^H(\{a\} \cup G)$. 
    \end{enumerate}   
In particular, if $a\in p_2^\omega H$, then $\Cl_{\Kk}^H(\{a\} \cup G) = \Cl_{\K_{TF}}^H(\{a\} \cup G \cup p_2^\omega H)$.
\end{prop}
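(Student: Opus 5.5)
Write $T := \Cl_{\K_{TF}}^H(\{a\}\cup G)$ and $C := \Cl_{\Kk}^H(\{a\}\cup G)$. As in the proof of Lemma~\ref{Lem:Kl_closure_equal_to_TF}, the starting observation is $C \supseteq T$. This holds because every $\Kk$-substructure of $H$ is a pure subgroup, and $\Kk$ admits intersections (Remark~\ref{Remark:Kk_AEC_Similar_Kl}). In both cases we show the two inclusions separately: minimality of $C$ gives one direction, and condition (2)(c) of Definition~\ref{Def:Kk(p_1,p_2)} applied to $C \leq_{\Kk} H$ gives the other.

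\emph{Case (2).} Suppose $p_1^\omega T = p_2^\omega T$. It suffices to show $T \leq_{\Kk} H$; minimality then gives $C \subseteq T$, hence $C = T$. Purity $T \leq_p H$ holds by definition of $T$. For (2)(b), Fact~\ref{Facts:Uncited}(1) gives $p_1^\omega T = p_1^\omega H \cap T$, and $p_1^\omega H = p_1^\omega G \subseteq G \subseteq T$, so $p_1^\omega T = p_1^\omega H$. Condition (2)(c) holds by the first disjunct, the case hypothesis. Finally, $T \in K_2(\kappa)$ because $p_\ell^\omega T \subseteq p_\ell^\omega H$ for each $\ell$.

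\emph{Case (1).} Suppose $p_1^\omega T \neq p_2^\omega T$, and put $T' := \Cl_{\K_{TF}}^H(\{a\}\cup G\cup p_2^\omega H)$.

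First, $T' \leq_{\Kk} H$. Condition (2)(b) follows exactly as in Case (2). For (2)(c), Fact~\ref{Facts:Uncited}(1) and $p_2^\omega H \subseteq T'$ give $p_2^\omega T' = p_2^\omega H \cap T' = p_2^\omega H$. Minimality then gives $C \subseteq T'$.

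For the reverse inclusion it suffices to show $p_2^\omega C = p_2^\omega H$, because then $p_2^\omega H \subseteq C$ and so $T' \subseteq C$. Since $C \leq_{\Kk} H$, condition (2)(c) says either $p_2^\omega C = p_2^\omega H$ (and we are done) or $p_2^\omega C = p_1^\omega C$. Rule out the second alternative by contradiction. Assume $p_2^\omega C = p_1^\omega C$. We have $p_1^\omega C = p_1^\omega H = p_1^\omega G$, so $p_1^\omega T = p_1^\omega H \cap T = p_1^\omega H$. Since $T \leq_p C$, Fact~\ref{Facts:Uncited}(1) applied inside $C$ gives
\[ p_2^\omega T = p_2^\omega C \cap T = p_1^\omega C \cap T = p_1^\omega H = p_1^\omega T, \]
which contradicts the case hypothesis.

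This reverse inclusion is the only mildly delicate point: the contradiction has to be obtained by comparing $T$ with $C$ through purity, using $T \leq_p C$, rather than by comparing $T$ with $H$.

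\emph{The final clause.} Suppose $a \in p_2^\omega H$. Then $a \in p_2^\omega H \cap T = p_2^\omega T$. If instead we were in Case (2), we would have $a \in p_1^\omega T \subseteq p_1^\omega H = p_1^\omega G \subseteq G$, contradicting $a \notin G$. So $p_1^\omega T \neq p_2^\omega T$, we are in Case (1), and the stated formula for $C$ follows.
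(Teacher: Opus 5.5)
Your proof is correct and takes essentially the paper's approach. The paper only says (1) and (2) are analogous to Lemma~\ref{Lem:Kl_closure_equal_to_TF}: the relevant $\K_{TF}$-closure is a $\Kk$-substructure of $H$, and in case (1) the first disjunct of (2)(c) is ruled out for $\Cl_{\Kk}^H(\{a\}\cup G)$, so it must contain $p_2^\omega H$; your purity argument supplies exactly the step that ``analogous'' leaves implicit, and your treatment of the final clause matches the paper's. (Your closing remark overstates things slightly: going through $H$ works equally well, since $p_2^\omega T = p_2^\omega H \cap T = p_2^\omega C \cap T$ by Fact~\ref{Facts:Uncited}(1) and $T \subseteq C \leq_p H$.)
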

\begin{proof} The proofs of (1) and (2) are analogous to those of Lemma \ref{Lem:Kl_closure_equal_to_TF}.

For the \emph{in particular}, observe that if $a\in p_2^\omega H$  then $p_1^\omega \Cl_{\K_{TF}}^H(\{a\} \cup G) \neq p_2^\omega \Cl_{\K_{TF}}^H(\{a\} \cup G)$ as otherwise $a \in G$ because $a \in p_2^\omega \Cl_{\K_{TF}}^H(\{a\} \cup G) = p_1^\omega \Cl_{\K_{TF}}^H(\{a\} \cup G) \subseteq p_1^\omega H =  p_1^\omega G \subseteq G$ where the first equality is by assumption and the second by definition of $\leq_{\Kk}$. Hence the result follows by (1). \end{proof}


  Throughout the rest of the section we will mostly work in $\Kk (2^\mu)=\Kk(p_1, p_2, 2^\mu)$ for a fixed regular uncountable cardinal $\mu$. The following groups will be used to witness the failure of $(<\mu)$-tameness.

\begin{defin}\label{Def:K2_M_N}
    Let $U_\mu := \prod_{\alpha < \mu} \Q e_\alpha$, $M_\mu := \bigoplus_{\alpha < \mu} \Z[1/p_1, 1/p_2] e_\alpha \leq U_\mu$, $N_\mu := \left\langle M_\mu, \prod_{\alpha<\mu} \Z[1/p_2] e_\alpha \right\rangle \leq U_\mu$, and $V_\mu := p_1^\omega N_\mu$.
\end{defin}

\begin{prop}\label{Prop:Basic-Nmu}
Let $M_\mu, N_\mu, V_\mu$ as in Definition \ref{Def:K2_M_N}.
\begin{enumerate}
    \item Let $(a_\alpha)_{\alpha < \mu} \in N_\mu$. Then $(a_\alpha)_{\alpha < \mu}\in p_1^\omega N_\mu = V_\mu$ if and only if for every $n<\omega$, there is $I_n \subseteq \mu$ finite such that $a_\alpha \in p_1^n \Z[1/p_2]$  for all $\alpha \in \mu \backslash I_n$. In particular, if $(a_\alpha)_{\alpha < \mu}\in p_1^\omega N_\mu$, then  $a_\alpha=0$ with at most countably many exceptions.
   \item $p_2^\omega N_\mu = N_\mu$ and $p_1^\omega V_\mu = p_2^\omega V_\mu = V_\mu$.
        \item $V_\mu, N_\mu \in K_2(2^\mu)$ and $V_\mu \leq_{\Kk} N_\mu$.
\end{enumerate}
\end{prop}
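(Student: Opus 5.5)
We first describe $N_\mu$ concretely. Since $\prod_{\alpha<\mu}\Z[1/p_2]e_\alpha$ is a subgroup and $M_\mu$ is a direct sum, every element of $N_\mu$ has the form $m + x$ with $m\in M_\mu$ and $x\in \prod_{\alpha<\mu}\Z[1/p_2]e_\alpha$. Equivalently, $N_\mu$ consists of those $(a_\alpha)_{\alpha<\mu}\in \prod \Z[1/p_1,1/p_2]$ such that $a_\alpha\in \Z[1/p_2]$ for all but finitely many $\alpha$.

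For (1), the plan is to prove both directions directly from this description.

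Suppose $a=(a_\alpha)\in p_1^\omega N_\mu$. Fix $n$ and write $a=p_1^n h$ with $h=m+x\in N_\mu$. Let $I_n$ be the (finite) support of $m$. For $\alpha\notin I_n$ we get $a_\alpha=p_1^n x_\alpha\in p_1^n\Z[1/p_2]$.

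Conversely, assume the condition holds for every $n$. Fix $n$ and let $h_\alpha=a_\alpha/p_1^n$. Then $h_\alpha\in\Z[1/p_2]$ for $\alpha\notin I_n$. For $\alpha\in I_n$, we have $a_\alpha\in\Z[1/p_1,1/p_2]$, so $h_\alpha\in\Z[1/p_1,1/p_2]$. Hence $h\in N_\mu$ and $p_1^n\mid a$ in $N_\mu$. This holds for every $n$, so $a\in p_1^\omega N_\mu$.

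For the ``in particular'', note that $\bigcap_n p_1^n\Z[1/p_2]=0$. Hence if $\alpha\notin\bigcup_n I_n$, then $a_\alpha=0$. The set $\bigcup_n I_n$ is countable.

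For (2), the first equality holds because $p_2$ is invertible in every coordinate ring involved, so we may divide coordinatewise by $p_2^n$ and stay in $N_\mu$. The equality $p_1^\omega V_\mu=V_\mu$ is Fact~\ref{Facts:Uncited}(2).

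For $p_2^\omega V_\mu=V_\mu$, let $a\in V_\mu$ and $k<\omega$, and put $b=a/p_2^k\in N_\mu$. The criterion in (1) is preserved, using the same $I_n$, because $p_2$ is a unit in $\Z[1/p_2]$ and $p_1^n\Z[1/p_2]$ is closed under division by $p_2$. So $b\in V_\mu$ and $p_2^k\mid a$ in $V_\mu$.

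For (3), torsion-freeness is clear since everything sits inside $U_\mu$. We need $|p_i^\omega N_\mu|$ and $|p_i^\omega V_\mu|\le 2^\mu$. This follows from $|N_\mu|\le|U_\mu|=\aleph_0^\mu=2^\mu$.

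Next, $V_\mu\le_p N_\mu$ by Fact~\ref{Facts:Uncited}(2). We also have $p_1^\omega V_\mu=V_\mu=p_1^\omega N_\mu$. Condition (2)(c) of Definition~\ref{Def:Kk(p_1,p_2)} holds via its first alternative, since by (2) $p_2^\omega V_\mu=V_\mu=p_1^\omega V_\mu$.

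The main point needing care is the converse direction of (1): checking that the coordinatewise quotient really lies in $N_\mu$. This is exactly where the description ``cofinitely many coordinates in $\Z[1/p_2]$'' is used. Everything else is routine.
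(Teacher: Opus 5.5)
Your proof is correct and follows the paper's argument essentially step by step. The forward direction of (1) uses the support of $m$, the converse and $p_2^\omega N_\mu = N_\mu$ use coordinatewise division, and (3) is verified through the first alternative of Condition (2)(c). The only cosmetic difference is in proving $p_2^\omega V_\mu = V_\mu$: you reuse the coordinate criterion from (1), whereas the paper argues abstractly that a $p_2^k$-th root in $N_\mu$ of an element of $p_1^\omega N_\mu$ stays in $p_1^\omega N_\mu$ because $\gcd(p_1,p_2)=1$.
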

\begin{proof} \
\begin{enumerate}
    \item $\Rightarrow$: By definition, $(a_\alpha)_{\alpha < \mu}\in p_1^\omega N_\mu = V_\mu$ exactly when, for every $n<\omega$, there exist $m_n \in M_\mu$ and $(b_\alpha)_{\alpha<\mu} \in \prod_{\alpha<\mu} \Z[1/p_2] e_\alpha$ such that $(a_\alpha)_{\alpha<\mu} = p_1^n m_n + (p_1^n b_\alpha)_{\alpha < \mu}$. Noting that the $e_\alpha$-coordinate of $p_1^n m_n$ is zero for all but finitely many $\alpha$ as $M_\mu$ is a direct sum, the result follows. 

    $\Leftarrow$:  Since $p_1^\omega \Z[1/p_1, 1/p_2] = \Z[1/p_1, 1/p_2]$, the result follows by constructing, for all $n <\omega$, an $m_n\in M_\mu$ which takes care of the coordinates of $(a_\alpha)_{\alpha < \mu}$ which are not in $p_1^n \Z[1/p_2]$.

    The \emph{in particular} follows from $a_\alpha \in p_1^\omega \Z[1/p_2]=0$ for all $\alpha \in \mu \backslash \bigcup_{n<\omega} I_n$.
    
    \item It is clear that $p_2^\omega N_\mu = N_\mu$, and $p_1^\omega V_\mu = V_\mu$ follows from Fact \ref{Facts:Uncited}. For $p_2^\omega V_\mu = V_\mu$, the inclusion $p_2^\omega(p_1^\omega N_\mu) \subseteq p_1^\omega N_\mu$ is obvious, and we are left to show the other inclusion. Let $g \in  p_1^\omega N_\mu$ and $k <\omega$. Since $p_2^\omega N_\mu = N_\mu$ there is $h \in N_\mu$ such that $p_2^k h = g$. As $g \in p_1^\omega N_\mu$ and $\gcd(p_1, p_2)=1$, it follows that $h \in p_1^\omega N_\mu$. Hence $g \in p_2^k(p_1^\omega N_\mu)$, and $g \in p_2^\omega(p_1^\omega N_\mu)$ follows.
    \item It is clear that $V_\mu, N_\mu \in K_2(2^\mu)$ as $\| N_\mu \| = 2^\mu$, and $V_\mu \leq_{\Kk} N_\mu$ follows from (2). \qedhere
\end{enumerate}

\end{proof}

\begin{remark}
$\| N_\mu \| = 2^\mu$ is the main reason we have to work with $\Kk(2^\mu)$ instead of $\Kk( \mu)$. Note also that $\| V_\mu \| = \mu^{\aleph_0}$.

    \end{remark}

The reason tameness fails in our example is due to the following algebraic result. The statement is similar to Theorem \ref{Prop:Homo_Unique}, but the proof uses more advanced machinery.

\begin{theorem}\label{Prop:measurable_homo} Assume $\mu$ is less than the first measurable cardinal. If $f:N_\mu \to N_\mu$ is a group homomorphism with $f(e_\alpha) = e_\alpha$ for all $\alpha < \mu$, then  $f = \id_{N_\mu}$.
\end{theorem}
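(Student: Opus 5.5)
The plan is to imitate the proof of Theorem~\ref{Prop:Homo_Unique}, replacing the divisibility argument by the Łoś--Eda theorem on slender groups. This is where the hypothesis that $\mu$ is below the first measurable cardinal enters. Put $g := f - \id_{N_\mu}$. We must show $g = 0$.

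First, $g \restr M_\mu = 0$. Indeed, $g(e_\alpha) = 0$ for every $\alpha<\mu$, and $N_\mu$ is torsion-free, so $g(r e_\alpha) = 0$ for every $r \in \Z[1/p_1,1/p_2]$: write $r = a/(p_1^k p_2^l)$ and note that $p_1^k p_2^l\, g(r e_\alpha) = a\, g(e_\alpha) = 0$. By definition every element of $N_\mu$ has the form $m + x$ with $m \in M_\mu$ and $x \in P := \prod_{\alpha<\mu}\Z[1/p_2] e_\alpha$. It therefore suffices to show that $g \restr P = 0$.

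Since $N_\mu \leq U_\mu = \prod_{\alpha<\mu}\Q e_\alpha$, an element of $N_\mu$ is $0$ exactly when all of its coordinates are $0$. So fix $\beta<\mu$ and let $\pi_\beta : N_\mu \to \Q$ be the $\beta$-th coordinate projection. Every element of $N_\mu$ is a sum of an element of $M_\mu$ and an element of $P$, so $\pi_\beta[N_\mu] \subseteq \Z[1/p_1,1/p_2] =: S$. Consider the homomorphism
\[ h_\beta := \pi_\beta \circ g \restr P : \textstyle\prod_{\alpha<\mu}\Z[1/p_2] e_\alpha \to S. \]
It vanishes on $\bigoplus_{\alpha<\mu}\Z[1/p_2]e_\alpha$, because that subgroup is contained in $M_\mu$. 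Now both $S$ and $\Z[1/p_2]$ are countable, torsion-free and reduced: they are not divisible, since $q$ does not divide $1$ for a prime $q\notin\{p_1,p_2\}$. Hence they are slender, by the standard fact that countable torsion-free reduced groups are slender. Equivalently, neither contains a copy of $\Q$, of the $p$-adic integers $J_p$, or of $\Z^\omega$.

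The key step is the Łoś--Eda theorem. If $|I|$ is less than the first measurable cardinal, the $A_i$ are slender and $S$ is slender, then every homomorphism $\prod_{i\in I} A_i \to S$ factors through the projection onto finitely many coordinates. Equivalently, $\operatorname{Hom}(\prod_{i\in I}A_i, S) \cong \bigoplus_{i\in I}\operatorname{Hom}(A_i,S)$ via restriction to the direct sum. We apply it with $I=\mu$ and $A_\alpha=\Z[1/p_2]$. Since $h_\beta$ vanishes on the direct sum, its image under this isomorphism is zero, so $h_\beta = 0$. As $\beta$ was arbitrary, $g(x)=0$ for all $x\in P$. Together with $g\restr M_\mu = 0$ this gives $g=0$, that is, $f=\id_{N_\mu}$.

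The main obstacle is this last step: we need the form of Łoś--Eda for products of arbitrary slender groups, not just for $\Z^\mu$. If only the $\Z^\mu$ version is available, I would reduce to it as follows. Given $x\in P$, one cannot simply clear denominators, because the $p_2$-exponents may be unbounded. Instead, for each $n$ split $\mu$ into the finitely or infinitely many sets $J_n$ on which the $p_2$-denominator of $x_\alpha$ is exactly $p_2^n$. Write $x=\sum_n x^{(n)}$, where $x^{(n)}$ is supported on $J_n$, so that $p_2^n x^{(n)}$ lies in a copy of $\Z^{J_n}$. First apply the $\Z^\mu$ version to each piece. Then apply slenderness of $S$ to the countable decomposition, using the characterisation of slenderness by sequences $\sum_n y_n$ with $y_n\to 0$ in the product topology. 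Regularity of $\mu$ is not needed for this theorem; only nonmeasurability is.
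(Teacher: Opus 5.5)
Your proposal is correct and follows essentially the paper's proof: set $g=f-\id_{N_\mu}$, note $g\restr M_\mu=0$, and show that each $\pi_\beta\circ g$ vanishes on $\prod_{\alpha<\mu}\Z[1/p_2]e_\alpha$, because it kills the direct sum, $\Z[1/p_1,1/p_2]$ is slender, and the \L{}o\'s--Eda theorem applies as $\mu$ is non-measurable (the paper cites Nunke's criterion for slenderness, which for countable torsion-free groups amounts to your ``reduced implies slender''). Slenderness of the factors $\Z[1/p_2]$ and your fallback reduction to $\Z^\mu$ are not needed, since \L{}o\'s's theorem holds for products of arbitrary groups mapping into a slender group.
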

\begin{proof}
   Let $g = f - \id_{N_\mu}$. Note that $g:N_\mu \to N_\mu$ is a group homomorphism with $g(e_\alpha) = 0$ for all $\alpha < \mu$.

    Let $W_\mu = \prod_{\alpha < \mu} \Z[1/p_2] e_\alpha \leq U_\mu$ and observe that $ N_\mu = \left\langle M_\mu, W_\mu \right\rangle \leq U_\mu$. Let $\pi_\beta: N_\mu \to \Z[1/p_1, 1/p_2]$ denote the projection map onto the $e_\beta$-coordinate. We will show $(\pi_\beta \circ g) \restr W_\mu = 0$ for all $\beta < \mu$, and hence that $g \restr W_\mu = 0$. Since $g \restr M_\mu = 0$ and $ N_\mu = \left\langle M_\mu, W_\mu \right\rangle$ this will imply that $g=0$ and thus $f = \id_{N_\mu}$.

    Let $\beta < \mu$ and observe that $(\pi_\beta \circ g) \restr W_\mu : W_\mu = \prod_{\alpha < \mu} \Z[1/p_2] e_\alpha \to \Z[1/p_1, 1/p_2]$ is such that $(\pi_\beta \circ g)(e_\alpha) = 0$ for every $\alpha <\mu$. Hence $(\pi_\beta \circ g) \restr \bigoplus_{\alpha < \mu} \Z[1/p_2] e_\alpha = 0 $

    Since $\Z[1/p_1, 1/p_2]$ is slender
    by Nunke's Criterion (see for example \cite[Corollary IX.2.4]{EkMe15}) and $\mu$ is below the first measurable cardinal, it follows that $(\pi_\beta \circ g) \restr W_\mu = 0 
    $ by the Łoś-Eda Theorem (see for example, \cite[Corollary~III.3.4]{EkMe15}). Hence $\pi_\beta \circ g = 0$.\end{proof}

We have the following complementary result to Theorem \ref{Prop:measurable_homo} which explains why our arguments can not be pushed beyond the first measurable cardinal. See \cite[Theorem 3.22]{mac} for a similar construction.

\begin{lemma} Assume $\mu$ is greater or equal to the first measurable cardinal. There exists a homomorphism $f:N_\mu \to N_\mu$ such that $f \neq \id_{N_\mu}$ and $f(e_\alpha) = e_\alpha$ for all $\alpha < \mu$.
\end{lemma}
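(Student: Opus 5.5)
We construct $f$ from a nonprincipal $\sigma$-complete ultrafilter. Since $\mu$ is at least the first measurable cardinal $\kappa_0\le\mu$, fix a nonprincipal $\kappa_0$-complete (in particular $\sigma$-complete) ultrafilter $\mathcal{D}_0$ on $\kappa_0$. Transport it to an ultrafilter $\mathcal{D}=\{X\subseteq\mu : X\cap\kappa_0\in\mathcal{D}_0\}$ on $\mu$. Then $\mathcal{D}$ is $\sigma$-complete and contains no finite set, hence no singleton.

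Set $W_\mu=\prod_{\alpha<\mu}\Z[1/p_2]e_\alpha$, so that $N_\mu=\langle M_\mu,W_\mu\rangle$. The key step is to define a homomorphism
\[\phi:N_\mu\to\Z[1/p_1,1/p_2]\]
by letting $\phi(a)$ be the $\mathcal{D}$-limit of the coordinates of $a$. Precisely, $\phi((a_\alpha)_{\alpha<\mu})$ is the unique $r$ with $\{\alpha : a_\alpha=r\}\in\mathcal{D}$.

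\emph{Existence of $r$.} Every element of $N_\mu$ has the form $m+w$ with $m\in M_\mu$ and $w\in W_\mu$. The coordinates $a_\alpha$ take countably many values in $\Z[1/p_1,1/p_2]$. By $\sigma$-completeness, the partition of $\mu$ according to the value of $a_\alpha$ into countably many pieces has exactly one piece in $\mathcal{D}$. So the limit exists and is unique.

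\emph{Additivity.} If $a$ has $\mathcal{D}$-limit $r$ on $X\in\mathcal{D}$ and $b$ has $\mathcal{D}$-limit $s$ on $Y\in\mathcal{D}$, then $a+b$ has coordinate $r+s$ on $X\cap Y\in\mathcal{D}$. Hence $\phi(a+b)=\phi(a)+\phi(b)$.

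\emph{Vanishing on the generators.} We have $\phi(e_\alpha)=0$, since $e_\alpha$ is zero off the singleton $\{\alpha\}\notin\mathcal{D}$. More generally, $\phi\restr M_\mu=0$ because elements of $M_\mu$ have finite support.

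\emph{Nontriviality.} For the constant element $\mathbf{1}=(1,1,\ldots)\in W_\mu$ we get $\phi(\mathbf{1})=1$.

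Now define $f(a)=a+\phi(a)\,e_0$. We must check that $f$ maps into $N_\mu$. We have $\phi(a)\in\Z[1/p_1,1/p_2]$, and $\phi(a)e_0\in M_\mu\subseteq N_\mu$, so $f(a)\in N_\mu$. Then:
\begin{itemize}
\item $f$ is a homomorphism, because $\phi$ is;
\item $f(e_\alpha)=e_\alpha$ for all $\alpha<\mu$, because $\phi(e_\alpha)=0$;
\item $f(\mathbf{1})=\mathbf{1}+e_0\neq\mathbf{1}$, so $f\neq\id_{N_\mu}$.
\end{itemize}

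The only delicate point is the $\sigma$-completeness argument for the existence of $\mathcal{D}$-limits. This is exactly where measurability is needed, and it is the step that fails below the first measurable cardinal, matching the Łoś–Eda argument used in Theorem \ref{Prop:measurable_homo}. A secondary subtlety is the countability of the value set, which holds because each coordinate lies in the countable ring $\Z[1/p_1,1/p_2]$. Everything else is routine verification.
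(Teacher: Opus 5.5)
Your proposal is correct and is essentially the paper's proof: the paper also takes a non-principal countably complete ultrafilter on $\mu$ and defines the ``most abundant coefficient'' homomorphism $\operatorname{mac}\colon N_\mu\to\Z[1/p_1,1/p_2]$ via the countable partition $\{\alpha<\mu : a_\alpha=q\}$, then sets $f(a)=a+\operatorname{mac}(a)e_0$ and checks that $f(e_\alpha)=e_\alpha$ and $f((1)_{\alpha<\mu})\neq(1)_{\alpha<\mu}$. The only thing you add is an explicit transport of a $\kappa_0$-complete ultrafilter from the first measurable $\kappa_0$ up to $\mu$, a step the paper takes for granted.
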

\begin{proof}
  Since $\mu$ is greater or equal to the first measurable cardinal, there exists a non-principal countably complete ultrafilter $\mathcal U$ on $\mu$.

  For every $a=(a_\alpha)_{\alpha<\mu}\in N_\mu$ and $q\in \Z[1/p_1, 1/p_2]$, let $I_{a,q}=\{\alpha <\mu: a_\alpha = q\}$. Note that $\mu = \bigcup_{q\in \Z[1/p_1, 1/p_2]} I_{a,q}$ defines a partition of $\mu$ into countably many parts. As $\mathcal U$ is countably complete, for any fixed $a\in N_\mu$, there exists a unique $q\in \Z[1/p_1, 1/p_2]$ with $I_{a,q}\in \mathcal U$. Let $\operatorname{mac}: N_\mu \to \Z[1/p_1, 1/p_2]$ be given by $\operatorname{mac}(a) := q$ where $q$ is the unique element in $\Z[1/p_1, 1/p_2]$ such that $I_{a,q}\in \mathcal U$, i.e., $\operatorname{mac}(a)$ denotes the \emph{most abundant coefficient} of $a\in N_\mu$ with respect to~$\mathcal U$. Observe that $\operatorname{mac}$ is a group homomorphism such that $\operatorname{mac}((0)_{\alpha<\mu})=0$ and $\operatorname{mac}((1)_{\alpha<\mu})=1$.
  
  Let $f:N_\mu \to N_\mu$ be given by $f(a) = a + \operatorname{mac}(a)e_0$. Clearly, $f$ is a group homomorphism. Observe that $f(e_\alpha) = e_\alpha + 0e_0 = e_\alpha$ for all $\alpha <\mu$, but $f((1)_{\alpha<\mu}) = (1)_{\alpha<\mu} + 1e_0 \neq (1)_{\alpha<\mu}$.
\end{proof}

We now prove one of the key results of the paper.

\begin{theorem}\label{Cor:Kk_not_kappa_tame} Assume $\mu$ is a regular  uncountable cardinal less than the first measurable cardinal. Then $\Kk(2^\mu)$ is not $(<\mu)$-tame.
  
\end{theorem}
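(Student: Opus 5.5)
The plan is to follow the proof of Theorem \ref{Thm:Kl_not_aleph0_tame}, with $V_\mu$ as the base model and $N_\mu$ as the ambient model. By Proposition \ref{Prop:Basic-Nmu}(3) we have $V_\mu, N_\mu \in K_2(2^\mu)$ and $V_\mu \leq_{\Kk} N_\mu$. Let $a := (1)_{\alpha<\mu} \in N_\mu$ and $b := e_0 + a \in N_\mu$. Both $\gtp_{\Kk}(a/V_\mu; N_\mu)$ and $\gtp_{\Kk}(b/V_\mu; N_\mu)$ lie in $\gS_{\Kk}(V_\mu)$, and I will show they witness the failure of $(<\mu)$-tameness.

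\emph{Claim 1: the two types are distinct.} First, $a, b \notin V_\mu$. For $b$, almost all coordinates equal $1 \notin p_1\Z[1/p_2]$, so Proposition \ref{Prop:Basic-Nmu}(1) with $n=1$ excludes it; the same argument works for $a$. Since $p_2^\omega N_\mu = N_\mu$, both $a$ and $b$ lie in $p_2^\omega N_\mu$. The ``in particular'' part of Proposition \ref{Prop:Kk_closure} then gives
\[\Cl_{\Kk}^{N_\mu}(\{a\}\cup V_\mu) = \Cl_{\K_{TF}}^{N_\mu}(\{a\}\cup V_\mu \cup N_\mu) = N_\mu,\]
and likewise for $b$. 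Suppose the types were equal. Since $\Kk$ admits intersections, Fact \ref{Prop:Galois_type_Vasey} yields an isomorphism $f: N_\mu \to N_\mu$ with $f\restr V_\mu = \id$ and $f(a)=b$. Each $e_\alpha$ lies in $M_\mu \subseteq p_1^\omega N_\mu = V_\mu$, because $\Z[1/p_1,1/p_2]$ is $p_1$-divisible. Hence $f(e_\alpha)=e_\alpha$ for all $\alpha<\mu$, and Theorem \ref{Prop:measurable_homo} forces $f=\id_{N_\mu}$. This contradicts $f(a)=b\neq a$.

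\emph{Claim 2: the types agree on every $C\subseteq V_\mu$ with $|C|<\mu$.} By Proposition \ref{Prop:Basic-Nmu}(1), every element of $V_\mu$ has countable support. So the union $S$ of the supports of the elements of $C$ has size at most $|C|\cdot\aleph_0$. This is $<\mu$ because $\mu$ is uncountable (and regular, although cardinality alone suffices here). Pick $\beta \in \mu\setminus(S\cup\{0\})$ and define $f_\beta(x) := x + x_\beta e_0$. Since $x_\beta \in \Z[1/p_1,1/p_2]$, we get $x_\beta e_0 \in M_\mu \subseteq N_\mu$, so $f_\beta$ maps $N_\mu$ into itself. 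It is an automorphism with inverse $x \mapsto x - x_\beta e_0$, using $\beta\neq 0$. It fixes $C$ pointwise because $x_\beta=0$ on $C$, and it sends $a$ to $b$. The $\Kk$-embeddings $f_\beta$ and $\id_{N_\mu}$ therefore witness $(a,C,N_\mu)\,E_{at}^{\Kk}\,(b,C,N_\mu)$, exactly as in Claim 2 of Theorem \ref{Thm:Kl_not_aleph0_tame}.

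The genuinely hard input is Claim 1, which rests on Theorem \ref{Prop:measurable_homo}; this is where the hypothesis that $\mu$ is below the first measurable cardinal enters. Once that theorem is available, the main remaining points to verify carefully are that the $e_\alpha$ indeed lie in $V_\mu$ and that $a \notin V_\mu$.
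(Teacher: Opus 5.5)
Your proof is correct and is essentially the paper's argument: the same pair $V_\mu \leq_{\Kk} N_\mu$, the same elements $a=(1)_{\alpha<\mu}$ and $b=a+e_0$, distinctness via the \emph{in particular} of Proposition~\ref{Prop:Kk_closure} combined with Theorem~\ref{Prop:measurable_homo}, and agreement on small $C$ via an automorphism $x\mapsto x+x_\beta e_0$. The only minor difference is that the paper takes $\beta=\sup_{c\in C}\beta_c$, which uses $\operatorname{cf}(\mu)>\aleph_0$ and the regularity of $\mu$, whereas you pick $\beta\neq 0$ outside the union of the countable supports, which needs only $|C|\cdot\aleph_0<\mu$ and so shows that regularity is not needed for this step.
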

\begin{proof}
    Let $G= V_\mu$ and $H=  N_\mu$ where $N_\mu, V_\mu$ are as in Definition \ref{Def:K2_M_N}. Let $a=(1)_{\alpha <\mu}, b = (1)_{\alpha <\mu}+e_0 \in H$. Observe  $G \leq_{\Kk} H$ in $\Kk(2^\mu)$  by Proposition \ref{Prop:Basic-Nmu}.

    We show that $\gtp_{\Kk}(a/G; H), \gtp_{\Kk}(b/G; H)$ witness that $\Kk$ is not $(<\mu)$-tame.

    That $\gtp_{\Kk}(a/G; H) \neq \gtp_{\Kk}(b/G; H)$ can be shown exactly as in Claim 1 of Theorem \ref{Thm:Kl_not_aleph0_tame} since $a, b \notin G$ by Proposition \ref{Prop:Basic-Nmu}, the \emph{in particular} of Proposition \ref{Prop:Kk_closure}, $p_2^\omega H = H$ by Proposition \ref{Prop:Basic-Nmu}, and Theorem \ref{Prop:measurable_homo}. We show that for every $C \subseteq G$ with $|C| < \mu$ we have that $\gtp_{\Kk}(a/G; H) \restr C = \gtp_{\Kk}(b/G; H) \restr C$.

    Let $C \subseteq  G = p_1^\omega N_\mu$ with $|C| < \mu$. For every $c =(c_\alpha)_{\alpha<\mu} \in C$, there is $\beta_c < \mu$ such that $c_\alpha = 0$ for every $\alpha \geq \beta_c$ by Proposition \ref{Prop:Basic-Nmu}(1) and the fact that $\operatorname{cf}(\mu)= \mu > \aleph_0$. Let $\beta = \operatorname{sup}_{c\in C} \beta_c$, and observe $\beta <\mu$ because $|C| < \mu$ and $\mu$ is regular. Hence for every $\alpha \geq \beta$ and $c \in C$, $c_\alpha =0$.

    Let $f_a: H \to H$ be given by $f_a((x_\alpha)_{\alpha <\mu})= (x_\alpha)_{\alpha <\mu} + x_\beta e_0$. Observe $f_a$ is an isomorphism such that $f_a(a)=b$ and $f_a\restr C= \id_C$ by the choice of $\beta$. Hence  $\gtp_{\Kk}(a/G; H) \restr C = \gtp_{\Kk}(b/G; H) \restr C$.
\end{proof}

\begin{remark}\label{Remark:up}
In the proof of Theorem \ref{Cor:Kk_not_kappa_tame}, we have that $\|G\| = \mu^{\aleph_0}$, and $\|G\| < 2^\mu =LS(\Kk(2^\mu))$ in most cases. This can be fixed if instead one takes $G':= V_\mu \oplus \bigoplus_{\beta< 2^\mu} \Z e'_\beta$ and $H':=  N_\mu \oplus \bigoplus_{\beta< 2^\mu} \Z e'_\beta$ where $N_\mu, V_\mu$ are as in Definition~\ref{Def:K2_M_N}, and $a=((1)_{\alpha <\mu}, (0)_{\beta <2^\mu}), b = a+e_0 \in H'$. This can be achieved as adding copies of $\Z$ does not change the algebraic properties of $V_\mu, N_\mu$ that we use in the proof of Theorem \ref{Cor:Kk_not_kappa_tame}.
 \end{remark}

 Observe that adding copies of $\Z$ to $V_\mu, N_\mu$, as in Remark \ref{Remark:up}, one can also strengthen Theorem \ref{Cor:Kk_not_kappa_tame}.
 
 \begin{lemma}\label{Lem:Kk_not_mu_theta_tame} Assume $\mu$ is a regular  uncountable cardinal less than the first measurable cardinal.
     For every $\theta \geq 2^\mu$, $\Kk(2^\mu)$ is not $(<\mu, \theta)$-tame.
 \end{lemma}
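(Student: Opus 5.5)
The plan is to repeat the proof of Theorem \ref{Cor:Kk_not_kappa_tame}, padded as in Remark \ref{Remark:up} so that the base model has cardinality exactly $\theta$. Fix $\theta \geq 2^\mu$. Let
\[ G_\theta := V_\mu \oplus \bigoplus_{\beta<\theta} \Z e'_\beta, \qquad H_\theta := N_\mu \oplus \bigoplus_{\beta<\theta} \Z e'_\beta, \]
and put $a = (1)_{\alpha<\mu}$ and $b = a + e_0$, both viewed in $H_\theta$. Since $\|V_\mu\| = \mu^{\aleph_0} \le 2^\mu \le \theta$, we get $\|G_\theta\| = \theta$.

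First I check the membership and order facts.
\begin{itemize}
\item For $p \in \{p_1,p_2\}$, $p^\omega$ commutes with direct sums and $p^\omega$ of a free group is $0$. Hence $p_1^\omega G_\theta = p_1^\omega H_\theta = V_\mu$ and $p_2^\omega G_\theta = V_\mu$, using Proposition \ref{Prop:Basic-Nmu}.
\item Also $p_2^\omega H_\theta = N_\mu$, which has size $2^\mu$. So $G_\theta, H_\theta \in K_2(2^\mu)$.
\item $G_\theta \leq_p H_\theta$, because $V_\mu = p_1^\omega N_\mu \leq_p N_\mu$ by Fact \ref{Facts:Uncited}.
\item Condition (2)(c) holds through its first disjunct, $p_2^\omega G_\theta = V_\mu = p_1^\omega G_\theta$. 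So $G_\theta \leq_{\Kk} H_\theta$.
\end{itemize}

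For distinctness of the Galois types, suppose $\gtp_{\Kk}(a/G_\theta;H_\theta) = \gtp_{\Kk}(b/G_\theta;H_\theta)$. The in-particular clause of Proposition \ref{Prop:Kk_closure} applies because $a, b \in p_2^\omega H_\theta = N_\mu$ and $a, b \notin G_\theta$. It gives
\[ \Cl^{H_\theta}_{\Kk}(\{a\}\cup G_\theta) = \Cl^{H_\theta}_{\K_{TF}}(\{a\}\cup G_\theta \cup N_\mu) = H_\theta, \]
and likewise for $b$. Fact \ref{Prop:Galois_type_Vasey} then yields an automorphism $f$ of $H_\theta$ fixing $G_\theta$ pointwise with $f(a) = b$.

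This is the step I expect to be the main obstacle: reducing to Theorem \ref{Prop:measurable_homo} in the presence of the extra summand. Let $\pi: H_\theta \to N_\mu$ be the projection and consider $g := \pi \circ f \restr N_\mu : N_\mu \to N_\mu$. We have $V_\mu \subseteq G_\theta$, and each $e_\alpha \in V_\mu$ since $\Z[1/p_1,1/p_2]e_\alpha$ is $p_1$-divisible. So $g(e_\alpha) = e_\alpha$, and Theorem \ref{Prop:measurable_homo} gives $g = \id_{N_\mu}$. Then $\pi(b) = b$ and $\pi(f(a)) = g(a) = a \neq b$, which contradicts $f(a) = b$. Alternatively, $f - \id$ maps $N_\mu = p_2^\omega H_\theta$ into $p_2^\omega H_\theta = N_\mu$, so $f\restr N_\mu$ is an endomorphism of $N_\mu$ directly.

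For the restrictions, let $C \subseteq G_\theta$ with $|C| < \mu$. Project each $c \in C$ to its $V_\mu$-component and apply Proposition \ref{Prop:Basic-Nmu}(1). By regularity of $\mu > \aleph_0$ there is $\beta < \mu$ such that every $c \in C$ has $e_\alpha$-coordinate $0$ for all $\alpha \ge \beta$. Define $f_a$ on $H_\theta$ by adding $x_\beta e_0$ to the $N_\mu$-part and acting as the identity on the free part. This is an automorphism with inverse obtained by subtracting $x_\beta e_0$. It fixes $C$ pointwise and sends $a$ to $b$. Hence $\gtp_{\Kk}(a/C;H_\theta) = \gtp_{\Kk}(b/C;H_\theta)$, so $\Kk(2^\mu)$ is not $(<\mu,\theta)$-tame.
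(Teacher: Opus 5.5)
Your proposal is correct and follows the paper's route. The paper proves this lemma only by pointing to the padding $V_\mu\oplus\bigoplus_{\beta<\theta}\Z e'_\beta \leq_{\Kk} N_\mu\oplus\bigoplus_{\beta<\theta}\Z e'_\beta$ of Remark \ref{Remark:up} and rerunning Theorem \ref{Cor:Kk_not_kappa_tame}; your checks, including the reduction to Theorem \ref{Prop:measurable_homo} via $f[p_2^\omega H_\theta]\subseteq p_2^\omega H_\theta = N_\mu$, fill in what the paper leaves implicit.

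One small fix: choose $\beta\geq 1$. For $\beta=0$, the map $x\mapsto x+x_0e_0$ doubles the $e_0$-coordinate and need not be surjective, so your stated inverse fails; any $\beta\geq 1$ past the supports of elements of $C$ works.
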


The family of AECs determined by $\Kk$ can also be used to obtain an example of failure of $(<\aleph_0)$-tameness.

\begin{lemma}\label{kk-no-aleph-0-tame}
    $\Kk(\aleph_0)$ is not $(<\aleph_0)$-tame. Moreover, for every infinite cardinal~$\theta$, $\Kk(\aleph_0)$ is not $(<\aleph_0, \theta)$-tame.
\end{lemma}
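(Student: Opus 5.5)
We reuse the countable witnesses $M, N$ of Definition \ref{def:M,N}, now inside $\Kk(\aleph_0)$, and pad them with free summands to reach any size $\theta$. Fix an infinite cardinal $\theta$ and set $G := M \oplus \bigoplus_{\beta<\theta} \Z e'_\beta$ and $H := N \oplus \bigoplus_{\beta<\theta}\Z e'_\beta$, so $\|G\| = \|H\| = \theta$. Since $p^\omega$ of a free group is $0$ and $p^\omega$ commutes with finite direct sums, Proposition \ref{Prop:div-MN} gives $p_1^\omega G = M = p_1^\omega H$. Also $p_2^\omega G = M$, because $M$ is $p_2$-divisible, and $p_2^\omega H = N$. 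Hence $G,H\in K_2(\aleph_0)$ and $G\leq_p H$, the purity coming from $M \leq_p N$. Condition (2)(c) of Definition \ref{Def:Kk(p_1,p_2)} holds via its first alternative, $p_2^\omega G = M = p_1^\omega G$, so $G \leq_{\Kk} H$. This is exactly why the modified condition (2)(c) allows large models: $G \neq p_1^\omega G$, yet the failure still occurs. Take $a = y_0+z_0$ and $b = e_0 + y_0 + z_0$, viewed in $H$.

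\emph{Distinctness.} We have $a,b \in p_2^\omega H$ and $a,b\notin G$, since $a,b \notin M = p_1^\omega N$. By the \emph{in particular} of Proposition \ref{Prop:Kk_closure}, $\Cl^H_{\Kk}(\{a\}\cup G) = \Cl^H_{\K_{TF}}(\{a\}\cup G\cup N) = H$, and likewise for $b$. If the two Galois types were equal, Fact \ref{Prop:Galois_type_Vasey} (with admission of intersections, Remark \ref{Remark:Kk_AEC_Similar_Kl}) would give an automorphism $f$ of $H$ fixing $G$ with $f(a)=b$.

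The main obstacle is extending Theorem \ref{Prop:Homo_Unique} to $H$, since $f$ need not preserve $N$. We argue as follows. Let $g = f - \id_H$, so $g$ vanishes on $G \supseteq M\cup\{e'_\beta\}$. The computation in the proof of Theorem \ref{Prop:Homo_Unique} shows $g(y_\alpha)$ is divisible by every power of $q_1$ in $H$. Now $q_1^\omega H = q_1^\omega N \oplus 0 = 0$ by Remark \ref{Remark:easy}, so $g(y_\alpha)=0$, and similarly $g(z_\alpha)=0$. The $\Z[1/p_2]$-multiples of $y_\alpha$ and $z_\alpha$ are handled by torsion-freeness. Thus $g = 0$, which contradicts $f(a)=b\neq a$.

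\emph{Agreement on finite sets.} Let $C\subseteq G$ be finite. Choose $n$ such that the $M$-components of all elements of $C$ lie in $\bigoplus_{\alpha<n}\Z[1/p_1,1/p_2]e_\alpha$. Define $f_a$ on $H$ by $(x,w)\mapsto (x + x_n e_0, w)$ for $x \in N$ and $w$ in the free part. This is an automorphism of $H$, since the same map on $N$ is one as in Claim 2 of Theorem \ref{Thm:Kl_not_aleph0_tame}, with inverse $x \mapsto x - x_ne_0$. It fixes $C$ and sends $a$ to $b$. Being an automorphism of $H$ fixing $C$, it witnesses $E_{at}$, so $\gtp_{\Kk}(a/C;H)=\gtp_{\Kk}(b/C;H)$.

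Therefore $\Kk(\aleph_0)$ is not $(<\aleph_0,\theta)$-tame for each infinite $\theta$, and in particular it is not $(<\aleph_0)$-tame. For $\theta = \aleph_0$ one may also simply take $G = M$ and $H = N$.
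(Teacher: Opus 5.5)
Your proposal is correct and follows the paper's proof: it reuses $M \leq_{\Kk} N$ from Definition~\ref{def:M,N} (valid because $p_1^\omega M = p_2^\omega M$) with the same types $a = y_0+z_0$ and $b = e_0+y_0+z_0$, and pads with copies of $\Z$ as in Remark~\ref{Remark:up} for the \emph{moreover}. What you add is explicit detail the paper summarizes as ``adding copies of $\Z$ does not change the algebraic properties'': you check that the padded pair satisfies (2)(c) through its first alternative, and you run the $q_1$-divisibility rigidity argument of Theorem~\ref{Prop:Homo_Unique} directly on $H$ using $q_1^\omega H = 0$.
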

\begin{proof}
Let $M, N$ be as in Definition \ref{def:M,N}. As $p_1^\omega M = p_2^\omega M$, it follows from Proposition \ref{Prop:div-MN} that $M, N \in K_2(\aleph_0)$ and $M\leq_{\Kk} N$. Then the types provided in the proof of Theorem \ref{Thm:Kl_not_aleph0_tame} can be used to show that $\Kk(\aleph_0)$ is not $(<\aleph_0)$-tame.

For the \emph{moreover} add copies of $\Z$ as in Remark \ref{Remark:up}.
\end{proof}

\begin{remark}
    A key difference between $\Kl$ and $\Kk(\aleph_0)$ is that for every uncountable cardinal $\theta$, $\Kl$ is $(<\aleph_0, \theta)$-tame while $\Kk(\aleph_0)$ is not $(<\aleph_0, \theta)$-tame. 
\end{remark}

\begin{cor}  Assume  $\kappa$ is greater or equal to the first measurable cardinal. Then $\Kk(p_1, p_2, \kappa)$ is not $(<\mu)$-tame for any regular cardinal $\mu$ less than the first measurable cardinal.
\end{cor}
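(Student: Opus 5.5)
The plan is to reuse the witnesses from Theorem \ref{Cor:Kk_not_kappa_tame}, now placed inside $\Kk(\kappa)$ with $\kappa$ at least the first measurable cardinal. Fix a regular $\mu$ below the first measurable. If $\mu=\aleph_0$, we need failure of $(<\aleph_0)$-tameness, and Lemma \ref{kk-no-aleph-0-tame} already gives it for $\Kk(\aleph_0)$. If $\mu$ is uncountable, Theorem \ref{Cor:Kk_not_kappa_tame} gives it for $\Kk(2^\mu)$. Since $\mu$ is below the first measurable $\lambda_0$ and $\lambda_0$ is strongly inaccessible, $2^\mu<\lambda_0\le\kappa$; trivially also $\aleph_0\le\kappa$. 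So in both cases the witnesses live in some $\Kk(\kappa')$ with $\kappa'\le\kappa$. It therefore suffices to transfer a failure of tameness from $\Kk(\kappa')$ up to $\Kk(\kappa)$.

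For the transfer, first note that $K_2(\kappa')\subseteq K_2(\kappa)$ and that both classes carry the same relation $\leq_{\Kk}$. Take the witnessing configuration $G\leq_{\Kk}H$, $a,b\in H$: either $(M,N,y_0+z_0,e_0+y_0+z_0)$ or $(V_\mu,N_\mu,(1)_{\alpha<\mu},(1)_{\alpha<\mu}+e_0)$. All these groups lie in $K_2(\kappa)$ and satisfy $G\leq_{\Kk}H$ there.

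The key observation is that Galois types computed in $\Kk(\kappa)$ for triples inside these groups coincide with those computed in $\Kk(\kappa')$. By Fact \ref{Prop:Galois_type_Vasey}, together with admitting intersections (Theorem \ref{Thm:main_Kk}(4)), equality of Galois types reduces to the existence of an isomorphism between closures $\Cl^{H}_{\Kk}(\{x\}\cup A)$ fixing $A$. This closure is an intersection over all $\Kk$-substructures of $H$ containing the set. Any $G'\leq_{\Kk}H$ with $G'\in K_2(\kappa)$ has $p_\ell^\omega G'\subseteq p_\ell^\omega H$, so it automatically lies in $K_2(\kappa')$. Hence the closure, and with it the equality or inequality of the types, is the same in both classes.

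Given this, the non-equality $\gtp(a/G;H)\neq\gtp(b/G;H)$ and the equalities $\gtp(a/C;H)=\gtp(b/C;H)$ for $|C|<\mu$ both transfer. This shows that $\Kk(\kappa)$ is not $(<\mu)$-tame. Alternatively, the explicit automorphisms $f_a$ from the earlier proofs already witness the restricted equalities directly, and Claim 1 uses only Theorem \ref{Prop:measurable_homo} or Theorem \ref{Prop:Homo_Unique}, neither of which depends on $\kappa$.

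The main point needing care is the invariance of closures, and hence of the Fact \ref{Prop:Galois_type_Vasey} characterization, under enlarging $\kappa$. This is settled by the observation above that $p_\ell^\omega$ of a pure subgroup is bounded by that of $H$. Separately, the inequality $2^\mu<\kappa$ uses the inaccessibility of the first measurable cardinal.
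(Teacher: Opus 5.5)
Your proposal is correct and takes essentially the approach the paper leaves implicit. Because the first measurable cardinal is strongly inaccessible, $2^\mu<\kappa$, so the witnesses from Theorem \ref{Cor:Kk_not_kappa_tame} (and from Lemma \ref{kk-no-aleph-0-tame} when $\mu=\aleph_0$) already lie in $\Kk(\kappa)$ under the same relation $\leq_{\Kk}$, and the same arguments go through. Your observation that closures inside $H$, and hence Galois types via Fact \ref{Prop:Galois_type_Vasey}, do not depend on $\kappa$ (since pure subgroups of $H$ have $p_\ell^\omega$ contained in $p_\ell^\omega H$) is a sound way to make this transfer explicit.
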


Based on Section \ref{sectio-3} one would expect that $\Kk(2^\mu)$ would be at least $2^\mu$-tame but we do not know if this is the case.

\begin{question}
    Is $\Kk(2^\mu)$  $2^\mu$-tame? Is it $\mu$-tame?
\end{question}

The following notion is inspired by the notion of amalgamation base \cite{shvi}, \cite{van06}.

\begin{defin}
  Let $\K$ be an abstract elementary class and $\lambda$ be an infinite cardinal. $M \in \K$ is a \emph{$(<\lambda)$-tame base} if for every $p, q \in \gS_\K(M)$, if $p \neq q$, then there is $A \subseteq |M|$ such that $|A|< \lambda$ and $p \restr A \neq q \restr A$. 
\end{defin}

\begin{remark}
    If $M \in \K$ and $\| M \| < \lambda$, then $M$ is $(<\lambda)$-tame base. Hence the notion of $(<\lambda)$-tame base is only interesting when one is working with models of cardinality greater than or equal to $\lambda$.
\end{remark}

To show the abundance of tame bases in $\Kk=\Kk(\kappa)$, we will use the following result which can be shown as in Proposition \ref{Prop:Kl_gtp_equal_to_TF} using Proposition \ref{Prop:Kk_closure}.

\begin{lemma}\label{Prop:Kk_gtp_equal_to_TF} 
Let $\kappa$ be an infinite cardinal.
    If $G \leq_{\Kk} H_1, H_2$ in $\Kk(\kappa)$, $a_1\in H_1$, $a_2 \in H_2$, and $p_2^\omega H_1, p_2^\omega H_2 \subseteq G$, then $\gtp_{\K_{TF}}(a_1/G; H_1) = \gtp_{\K_{TF}}(a_2/G; H_2)$ if and only if $\gtp_{\Kk(\kappa)}(a_1/G; H_1) = \gtp_{\Kk(\kappa)}(a_2/G; H_2)$.
\end{lemma}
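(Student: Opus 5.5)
We follow the proof of Proposition \ref{Prop:Kl_gtp_equal_to_TF}. Since both $\K_{TF}$ and $\Kk(\kappa)$ admit intersections (Fact \ref{Thm:Properties_TF} and Theorem \ref{Thm:main_Kk}(4)), Fact \ref{Prop:Galois_type_Vasey} applies in both classes. Each equality of types is therefore equivalent to the existence of an isomorphism between the respective closures of $\{a_i\}\cup G$ that fixes $G$ pointwise and sends $a_1$ to $a_2$. So it suffices to show that for $i\in\{1,2\}$,
\[
\Cl_{\Kk}^{H_i}(\{a_i\}\cup G)=\Cl_{\K_{TF}}^{H_i}(\{a_i\}\cup G).
\]
Once the closures agree, the same isomorphism witnesses both equalities of Galois types, and the equivalence follows.

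First, if $a_i\in G$ then both closures equal $G$. Indeed, $G\leq_{\Kk}H_i$ and in particular $G\leq_p H_i$, so $G$ is the smallest set in either family containing $G$. We may therefore assume $a_i\notin G$, so that Proposition \ref{Prop:Kk_closure} applies to $G\leq_{\Kk}H_i$.

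In case (2) of Proposition \ref{Prop:Kk_closure} the closures agree directly. In case (1), the $\Kk$-closure is $\Cl_{\K_{TF}}^{H_i}(\{a_i\}\cup G\cup p_2^\omega H_i)$. By hypothesis $p_2^\omega H_i\subseteq G$, so $\{a_i\}\cup G\cup p_2^\omega H_i=\{a_i\}\cup G$ and this is again $\Cl_{\K_{TF}}^{H_i}(\{a_i\}\cup G)$. In either case the closures coincide.

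The main point to verify is that Proposition \ref{Prop:Kk_closure}, whose proof the paper only sketches as analogous to Lemma \ref{Lem:Kl_closure_equal_to_TF}, genuinely holds in both cases. If one prefers to avoid it, a direct check works. Let $D=\Cl_{\K_{TF}}^{H_i}(\{a_i\}\cup G)$. We have $D\leq_p H_i$, and since $p_1^\omega H_i=p_1^\omega G\subseteq G\subseteq D$, Fact \ref{Facts:Uncited} gives $p_1^\omega D=p_1^\omega H_i$. Likewise $p_2^\omega H_i\subseteq G\subseteq D$ gives $p_2^\omega D=p_2^\omega H_i$. Hence $D\leq_{\Kk}H_i$ and $D\in K_2(\kappa)$. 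Minimality then yields $\Cl_{\Kk}^{H_i}(\{a_i\}\cup G)\subseteq D$, and the reverse inclusion holds because every $\Kk$-substructure is a pure subgroup. This direct argument is the route I would actually write down.
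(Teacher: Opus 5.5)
Your proof is correct and follows the paper's route: the hypothesis $p_2^\omega H_i\subseteq G$ gives $\Cl_{\Kk}^{H_i}(\{a_i\}\cup G)=\Cl_{\K_{TF}}^{H_i}(\{a_i\}\cup G)$, and then Fact~\ref{Prop:Galois_type_Vasey} applied in both classes yields the equivalence, exactly as in Proposition~\ref{Prop:Kl_gtp_equal_to_TF}. Your direct check that the $\K_{TF}$-closure $D$ already satisfies $D\leq_{\Kk}H_i$ is a valid, self-contained replacement for citing Proposition~\ref{Prop:Kk_closure}; it uses Fact~\ref{Facts:Uncited}(1) to get $p_\ell^\omega D=p_\ell^\omega H_i$, and it also makes the separate case $a_i\in G$ unnecessary.
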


\begin{lemma} Let $\kappa$ be an infinite cardinal.
If $0\ne G \in K_2(\kappa)$, then there is $H \in K_2(\kappa)$ a $(<\aleph_0)$-tame base with $G \leq_{\Kk} H$ and $\| G \| = \| H \|$.
\end{lemma}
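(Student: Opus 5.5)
The plan is to find $H$ with $p_1^\omega H \neq p_2^\omega H$. For such an $H$, Condition (2)(c) of Definition \ref{Def:Kk(p_1,p_2)} forces $p_2^\omega H_i = p_2^\omega H \subseteq H$ whenever $H \leq_{\Kk} H_i$. This puts us in the situation of Lemma \ref{Prop:Kk_gtp_equal_to_TF}, and the $(<\aleph_0)$-tameness of $\K_{TF}$ (Fact \ref{Thm:Properties_TF}) can then be transferred exactly as in Case 2 of Lemma \ref{Lem:Kl_aleph1_tame}.

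\emph{Construction of $H$.} If $p_1^\omega G \neq p_2^\omega G$, take $H = G$. Otherwise $p_1^\omega G = p_2^\omega G$, and we let $H = G \oplus \Z[1/p_2]$. This gives
\[
p_1^\omega H = p_1^\omega G \oplus 0 \quad\text{and}\quad p_2^\omega H = p_2^\omega G \oplus \Z[1/p_2],
\]
so $p_1^\omega H \neq p_2^\omega H$. We check that $G \leq_{\Kk} H$:
\begin{itemize}
\item $G$ is a direct summand of $H$, hence pure in $H$;
\item $p_1^\omega G = p_1^\omega H$ by the first display;
\item (2)(c) holds through its first disjunct $p_2^\omega G = p_1^\omega G$.
\end{itemize}
We also have $|p_\ell^\omega H| \leq \kappa$ for $\ell \in \{1,2\}$, so $H \in K_2(\kappa)$. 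Since $G \neq 0$ is torsion-free, $G$ is infinite, and therefore $\|H\| = \|G\|$. This is the only place where the hypothesis $G \neq 0$ is used.

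\emph{$H$ is a $(<\aleph_0)$-tame base.} Let $p = \gtp_{\Kk}(a_1/H;H_1)$ and $q = \gtp_{\Kk}(a_2/H;H_2)$ with $H \leq_{\Kk} H_1, H_2$, and suppose $p\restr D = q \restr D$ for every finite $D \subseteq H$.

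First, since $p_2^\omega H \neq p_1^\omega H$, Condition (2)(c) gives $p_2^\omega H_i = p_2^\omega H \subseteq H$. By Lemma \ref{Prop:Kk_gtp_equal_to_TF}, it therefore suffices to show that the $\K_{TF}$-types over $H$ agree.

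Second, by Fact \ref{Thm:Properties_TF}(3), it suffices to show that the $\K_{TF}$-types agree on every finite $D \subseteq H$.

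Third, fix such a $D$. From $p\restr D = q\restr D$ and Fact \ref{Prop:Galois_type_Vasey} (using that $\Kk$ admits intersections), we get an isomorphism
\[
f:\Cl^{H_1}_{\Kk}(\{a_1\}\cup D)\cong \Cl^{H_2}_{\Kk}(\{a_2\}\cup D)
\]
fixing $D$ and sending $a_1 \mapsto a_2$. Both closures are pure subgroups of $H_1$ and $H_2$ respectively. The same chain of $E_{at}^{\K_{TF}}$ relations as in Lemma \ref{Lem:Kl_aleph1_tame} then yields equality of the $\K_{TF}$-types over $D$.

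\emph{Main obstacle.} The main step is finding an $H$ that breaks the equality $p_1^\omega = p_2^\omega$ while still $\Kk$-extending $G$. A naive extension fails here, since (2)(c) only permits $p_2^\omega$ to grow when $p_1^\omega G = p_2^\omega G$. This is precisely the case in which the summand $\Z[1/p_2]$ is admissible, so the case split above handles it. The remaining steps are routine transfers already carried out in Section \ref{sectio-3}.
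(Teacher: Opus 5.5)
Your proposal is correct and follows the paper's proof. It uses the same case split ($H = G$ when $p_1^\omega G \neq p_2^\omega G$, and $H = G \oplus \Z[1/p_2]$ otherwise), then reruns Case~2 of Lemma~\ref{Lem:Kl_aleph1_tame} through Lemma~\ref{Prop:Kk_gtp_equal_to_TF}, and you simply spell out the verifications the paper calls ``clear'' (that $G \leq_{\Kk} H$, $H \in K_2(\kappa)$, $\|H\| = \|G\|$, and $p_2^\omega H_i = p_2^\omega H \subseteq H$).
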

\begin{proof}
    If $p_1^\omega G = p_2^\omega G$, let $H = G \oplus \mathbb{Z}[1/p_2]$, otherwise let $H = G$. It is clear that $H \in K_2(\kappa)$, $G \leq_{\Kk} H$, and $\| G \| = \| H \|$. The same argument as that given in Case~2 of Lemma \ref{Lem:Kl_aleph1_tame} using Lemma \ref{Prop:Kk_gtp_equal_to_TF} can be used to show that $H$ is a $(<\aleph_0)$-tame base. 
\end{proof}

We show that $\Kk$ has some weak stability.

\begin{lemma}\label{Thm:Kk_almost_stable} Let $\kappa$ be an infinite cardinal.
   If $\lambda^{\aleph_0} = \lambda$ and $\lambda \geq \kappa$, then $\Kk(\kappa)$ is almost stable in $\lambda$.
\end{lemma}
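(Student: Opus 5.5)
The plan is to bound $|\gS_{\Kk}(G;H)|$ by the number of $\K_{TF}$-Galois types over a set of size $\lambda$, and then use the stability of $\K_{TF}$ in $\lambda$ (Fact \ref{Thm:Properties_TF}(4)). Fix $G \leq_{\Kk} H$ with $\|G\| = \lambda$ and put $P := p_2^\omega H$. Since $H \in K_2(\kappa)$, we have $|P| \leq \kappa \leq \lambda$, so $A := G \cup P$ has size $\lambda$. The key claim is that for $b_1, b_2 \in H$,
\[\gtp_{\K_{TF}}(b_1/A; H) = \gtp_{\K_{TF}}(b_2/A; H) \implies \gtp_{\Kk}(b_1/G; H) = \gtp_{\Kk}(b_2/G; H).\]
This avoids splitting on whether $p_2^\omega G = p_2^\omega H$. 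That split is where Lemma \ref{Prop:Kk_gtp_equal_to_TF} alone would suffice, and it would fail in the other case.

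To prove the claim, first dispose of the case where $b_1$ or $b_2$ lies in $G$: equality of the $\K_{TF}$-types over $A \supseteq G$ then forces $b_1 = b_2$. So assume $b_1, b_2 \notin G$. By Fact \ref{Prop:Galois_type_Vasey} applied to $\K_{TF}$ (which admits intersections), there is an isomorphism
\[f : \Cl^H_{\K_{TF}}(\{b_1\}\cup A) \cong \Cl^H_{\K_{TF}}(\{b_2\}\cup A)\]
fixing $A$ pointwise with $f(b_1) = b_2$. Both groups are pure in $H$, and $\Cl^H_{\K_{TF}}(X)$ is the pure closure in $H$ of $\langle X\rangle$. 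Hence $f$ maps $\Cl^H_{\K_{TF}}(\{b_1\}\cup G)$ onto $\Cl^H_{\K_{TF}}(\{b_2\}\cup G)$: it maps the generated subgroup onto the generated subgroup, and the pure closure computed inside a pure subgroup of $H$ agrees with the one computed in $H$.

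Next use that the dichotomy of Proposition \ref{Prop:Kk_closure}, namely whether $p_1^\omega C_i = p_2^\omega C_i$ for $C_i := \Cl^H_{\K_{TF}}(\{b_i\}\cup G)$, is an isomorphism invariant. Since $f\restr C_1 : C_1 \cong C_2$, both $b_1$ and $b_2$ fall into the same case.
\begin{itemize}
\item In case (1), $\Cl^H_{\Kk}(\{b_i\}\cup G) = \Cl^H_{\K_{TF}}(\{b_i\}\cup A)$, and $f$ itself is the required isomorphism.
\item In case (2), $\Cl^H_{\Kk}(\{b_i\}\cup G) = C_i$, and $f\restr C_1$ works.
\end{itemize}
In either case Fact \ref{Prop:Galois_type_Vasey}, now for $\Kk$, gives equality of the $\Kk$-types over $G$.

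Finally, count. Let $G' := \Cl^H_{\K_{TF}}(A)$, so $\|G'\| = \lambda$ and $G' \leq_p H$. Since $\lambda^{\aleph_0} = \lambda$, $\K_{TF}$ is stable in $\lambda$, so $|\gS_{\K_{TF}}(G')| \leq \lambda$. Each $\K_{TF}$-type over $A$ realized in $H$ is a restriction of one over $G'$, so there are at most $\lambda$ of them, and by the claim $|\gS_{\Kk}(G;H)| \leq \lambda$.

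The main obstacle is the middle step: verifying that $f$ carries $C_1$ exactly onto $C_2$ via the pure-closure description, and that the case distinction of Proposition \ref{Prop:Kk_closure} transfers correctly along $f$.
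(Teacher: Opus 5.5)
Your proof is correct. The overall reduction is the same as the paper's: both bound $|\gS_{\Kk}(G;H)|$ by the number of $\K_{TF}$-types over $G' = \Cl^H_{\K_{TF}}(G \cup p_2^\omega H)$, which has size $\lambda$ because $|p_2^\omega H| \le \kappa \le \lambda$, and then use stability of $\K_{TF}$ in $\lambda$.

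The two proofs differ in the transfer from $\K_{TF}$-types to $\Kk$-types.

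\textbf{The paper's transfer.} It notes that $G' = \Cl^H_{\Kk}(G \cup p_2^\omega H)$, so $G' \leq_{\Kk} H$ and $p_2^\omega H \subseteq G'$. Hence Lemma \ref{Prop:Kk_gtp_equal_to_TF} applies over $G'$ itself. It turns equal $\K_{TF}$-types over $G'$ into equal $\Kk$-types over $G'$, and one then just restricts to $G \subseteq G'$. Your remark that the lemma \emph{would fail in the other case} is true only if you insist on applying it over $G$. Moving up to $G'$ and restricting back down removes any case split.

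\textbf{Your transfer.} You prove the statement over $G$ directly. You transport the $\K_{TF}$-isomorphism of closures over $A$ to the $\Kk$-closures over $G$, using the pure-closure description and the isomorphism-invariance of the dichotomy in Proposition \ref{Prop:Kk_closure}. Each step checks out, including $f[C_1] = C_2$ and the fact that $b_1, b_2$ fall into the same case.

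\textbf{Comparison.} Your route needs only $G' \leq_p H$, not $G' \leq_{\Kk} H$. However, it is longer and essentially reproves, in a mixed form, what Lemma \ref{Prop:Kk_gtp_equal_to_TF} together with monotonicity of restriction already gives.
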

\begin{proof}
    Suppose $\lambda^{\aleph_0} = \lambda$ and let $G, H \in K_2(\kappa)$ with $\|G\|=\lambda$ and $G \leq_{\Kk} H$. Assume for sake of contradiction that $|\gS_{\Kk}(G; H)| > \lambda$. Let $\{\gtp_{\Kk}(a_i/G; H) : i<\lambda^+\}$ be an enumeration of distinct types in $\gS_{\Kk}(G; H)$. Let $G' = \Cl_{\Kk}^H(G \cup p_2^\omega H) = \Cl_{\K_{TF}}^H(G \cup p_2^\omega H)$ and $\Phi: \lambda^+ \to \gS_{\K_{TF}}(G')$ be given by $\Phi(i)=\gtp_{\K_{TF}}(a_i/G'; H)$. 

    Since $\| G' \| = \lambda$ (because $|p_2^\omega H|\le \kappa \le \lambda$) and $\K_{TF}$ is stable in $\lambda$ by Fact \ref{Thm:Properties_TF}, there exist $i\neq j <\lambda^+$ such that $\gtp_{\K_{TF}}(a_i/G'; H) = \gtp_{\K_{TF}}(a_j/G'; H)$. Since $G' \leq_{\Kk} H$, it follows from Lemma \ref{Prop:Kk_gtp_equal_to_TF} that $\gtp_{\Kk}(a_i/G'; H) = \gtp_{\Kk}(a_j/G'; H)$. As $G \leq_{\Kk} G'$, we have that $\gtp_{\Kk}(a_i/G; H) = \gtp_{\Kk}(a_j/G; H)$, a contradiction.
\end{proof}

We conclude with the proof of Theorem \ref{Thm:main_Kk}.

\begin{proof}[Proof of Theorem \ref{Thm:main_Kk}] It follows from Remark \ref{Remark:Kk_AEC_Similar_Kl} that 
    $\Kk = \Kk(p_1, p_2, \kappa)$ is an AEC with $\LS(\Kk) = \kappa$ and that (3) and (4) hold.
    \begin{enumerate}
        \item Follows from Theorem \ref{Cor:Kk_not_kappa_tame}.
        \item Follows from Lemma \ref{Thm:Kk_almost_stable}.\qedhere
    \end{enumerate}
\end{proof}

\end{document}